%% file: main.tex
\documentclass{article}
\input{macros}
\input{packages}

\usepackage{soul}

\title{A reduced-order model for parametrized Optimal Transport problems}
\author{Elise Bonnet-Weill\thanks{ CERMICS, CNRS, ENPC, Institut Polytechnique de Paris, ParMA, Inria Saclay, Marne-la-Vallée, France.
email : elise.bonnet-weill@enpc.fr},
Virginie Ehrlacher\thanks{ CERMICS, CNRS, ENPC, Institut Polytechnique de Paris, Marne-la-Vallée, France}, Luca Nenna\thanks{Universit\'e Paris-Saclay, CNRS, Laboratoire de math\'ematiques d'Orsay, ParMA, Inria Saclay, 91405, Orsay, France. 
email: luca.nenna@universite-paris-saclay.fr} \thanks{Institut Universitaire de France (IUF).}}
\date{\today}

\begin{document}

\maketitle

\begin{abstract}
    In this work, we aim at efficiently solving a parametrized family of optimal transport problems by using model order reduction methods. We propose a reduced-order model by adding to the primal (respectively dual) version of the high-fidelity model the additional constraint to live in a non negative sub cone (resp. in subspaces) of small dimension. The reduced-order model then reads as a linear program with a small number of degrees of freedom and constraints.  We identify explicit conditions under which this reduced-order model has at least one solution. We propose two a posteriori error estimations that bounds the error between the optimal values of the high-fidelity problem and the reduced-order model. As one of these estimations requires the computation of non linear terms (with respect to the reduction of dimension), we use an Empirical Interpolation Method (EIM) (see e.g. \cite{maday2007general} or \cite{barrault2004empirical}) to numerically efficiently compute  this estimation. We apply the whole methodology on a simple 1D example and on a problem of color transfer between images, and compare its performances to Sinkhorn algorithm. 
\end{abstract}

\section*{Notations}
We present here notations that are used throughout the article without recalling their meaning.
\begin{itemize}
    \item Given two real matrices $\mathbf{B}$ and $\mathbf{E}$ of dimension $\cN_x \times \cN_y$, where $\cN_x \in \N^*$ and $\cN_y \in \N^*$ we denote by $\left\langle \textbf{B}, \textbf{E} \right\rangle$ the Frobenius scalar product between $\textbf{E}$ and $\textbf{B}$.
    \item For $\cN \in \N^*$, we define the $\ell_1$ norm on $\RR^{\cN}$ saying that for every vector $\ww \in \RR^\cN$,
        \begin{equation*}
            \| \ww \|_1 := \sum_{i = 1}^{\cN} |\ww_i|.
        \end{equation*}
    \item For $\cN \in \N^*$, we define the $\ell_\infty$ norm on $\RR^{\cN}$ saying that for every vector $\ww \in \RR^\cN$,
        \begin{equation*}
            \| \ww \|_\infty := \max_{1 \leq i \leq \cN} |\ww_i|.
        \end{equation*}
    \item For a real number $x \in \RR $, $\lfloor x \rfloor$ designates the integer part of $x$.
\end{itemize}

\section{Introduction}
The numerical simulation of complex physical, economic, or biological systems often requires the repeated solution of parametric problems under varying conditions — a task that rapidly becomes computationally prohibitive when state-of-the-art discretization methods are employed. In this context, Model Order Reduction (MOR) has emerged as a fundamental strategy to construct low-dimensional surrogate models that accurately reproduce the behavior of high-fidelity numerical simulations at a fraction of their computational cost. Among the various MOR strategies, Reduced Basis (RB) methods have established themselves as a mathematically rigorous and computationally efficient framework, particularly well-suited to parametric partial differential equations (PDEs). The aim of the present work is to propose a new reduced basis strategy to build reduced-order models to accelerate the resolution of parametrized Optimal Transport problems. We first give some introduction to reduced basis methods and numerical approaches for the resolution of optimal transport problems and then outline the main contributions of our paper. 

\paragraph{Reduced Basis Methods.}
The reduced basis methodology was firstly introduced in the pioneering works of~\cite{almroth1978automatic} and~\cite{noor1980reduced} in the context of structural mechanics, and  subsequently  made mathematically rigorous through the contributions of~\cite{prud2002reliable}. 
The central idea is to approximate the solution manifold — the set of all solutions to a parametric problem as the parameter varies over its admissible domain — by a low-dimensional affine subspace, the so-called reduced basis space, constructed from a carefully selected set of high-fidelity snapshot solutions. 
The RB approximation is then sought in this surrogate space via a Galerkin or Petrov–Galerkin projection, yielding a system of dramatically reduced size that can be solved orders of magnitude faster than the original truth discretization. A theoretical foundation for the convergence of the greedy construction algorithm was later laid by~\cite{buffa2012priori}, and further refined by~\cite{binev2011convergence}. A cornerstone of the RB methodology is the availability of rigorous and efficiently computable a posteriori error estimators, which certify the accuracy of the reduced solution with respect to the high-fidelity reference and drive the greedy algorithm used for basis construction. Such estimators, typically based on dual norms of residuals, were developed for parabolic problems by~\cite{grepl2005posteriori} and for finite-volume evolution schemes by~\cite{haasdonk2008reduced}. 
These estimators enable an offline-online computational decomposition: the expensive pre-computations (basis generation, affine decomposition of operators) are performed once during an offline phase, while the online phase — consisting in solving the reduced system and evaluating the error bound for a new parameter instance — is performed at marginal cost, independent of the dimension of the high-fidelity space. The method has since been systematically exposed in textbook form by~\cite{hesthaven2016certified} and by~\cite{quarteroni2015reduced}, and its general theory for affinely parametrized elliptic coercive PDEs is thoroughly surveyed in~\cite{rozza2008reduced}. Extensions to non-affinely parametrized problems, such as those arising in geometry optimization, have been addressed through the Empirical Interpolation Method (EIM) introduced by~\cite{barrault2004empirical}, which approximates non-affine coefficient functions by a low-rank collateral expansion and permits the offline-online decomposition to be maintained. Non-affine and nonlinear parametric dependence was treated within the full RB framework by~\cite{grepl2007efficient}. Beyond the classical greedy construction, the Proper Orthogonal Decomposition (POD) provides an alternative, singular-value-decomposition-based approach to extracting dominant modes from a set of snapshots~\cite{volkwein2013proper,haasdonk2013convergence}.

\paragraph{Numerical Methods for Optimal Transport.} Optimal Transport (OT) is a classical mathematical theory, dating back to Monge~\cite{monge1781memoire} and Kantorovich~\cite{kantorovich2006translocation}, that formalizes the problem of finding the most efficient way to redistribute one probability measure into another with respect to a given cost function. The monographs by Villani~\cite{villani2021topics,villani2009optimal} and Santambrogio~\cite{santambrogio2015optimal}, and the computational review by Peyré and Cuturi~\cite{COTFNT}, provide comprehensive accounts of the theory and its modern applications, which span machine learning, image processing, computational fluid mechanics, economics, and biology.

At the heart of the theory lies the Kantorovich dual problem, whose solutions — the Kantorovich potentials — are functions whose gradients (in the quadratic cost case) recover the optimal transport map through Brenier's theorem. The regularity theory of these potentials is well-understood under smoothness assumptions on the source and target densities, tracing back to Caffarelli's work on the Monge–Ampère equation and generalized to a large class of cost functions by~\cite{ma2005regularity}. Quantitative stability of optimal transport maps and their associated potentials with respect to perturbations of the input measures has been studied by~\cite{gigli2011holder} and, in a quantitative and linearized form amenable to data science applications, by Mérigot, Delalande and Chazal~\cite{merigot2020quantitative}. This stability theory is fundamental: it provides the mathematical basis for expecting the map sending a parametric family of measures to the corresponding Kantorovich potentials — to exhibit regularity in the parameter, a prerequisite for any successful model reduction strategy.

From a numerical point of view, the discrete formulation of OT leads to a linear programming (LP) problem — the classical transportation problem — whose computational complexity fast increases with the dimension of the problem. This has motivated the development of more scalable approaches. The entropic regularization of OT, introduced in its modern form in ~\cite{cuturi2013sinkhorn,galichon2022cupid,benamou2015iterative}, replaces the LP by a strictly convex optimization problem solvable via the Sinkhorn–Knopp matrix-scaling algorithm at a speed  faster than LP-based solvers. The resulting Sinkhorn distances have become ubiquitous in machine learning, despite the bias introduced by regularization. De-biased versions, including the Sinkhorn divergences of~\cite{feydy2019interpolating}, restore (some) properties of a genuine distance while retaining computational efficiency.

Alternative numerical approaches include semi-discrete OT methods, which handle the case where one measure is continuous and the other discrete through Laguerre cell decompositions and Newton-type or quasi-Newton solvers, as developed by~\cite{merigot2011multiscale} and~\cite{levy2015numerical}; methods based on the dynamic Benamou–Brenier fluid mechanics formulation~\cite{benamou2000computational}, which reformulates OT as a convex optimization over flows and has led to efficient proximal splitting algorithms; and approaches grounded in the theory of semi-smooth Newton methods for the Monge–Ampère equation. A unified treatment of discretization methods and their algorithms, covering the assignment problem, entropic regularization, and semi-discrete OT, is provided in the handbook chapter by~\cite{merigot2021optimal}. Sliced Wasserstein distances offer yet another computationally attractive alternative based on one-dimensional projections.

\paragraph{Parametric Optimal Transport and Model Reduction.} 
Despite this rich landscape of numerical methods, the problem of computing Kantorovich potentials repeatedly for varying parameter values — arising for instance in uncertainty quantification, shape optimization, multi-marginal OT, or Wasserstein barycenter computation — remains computationally intensive. Each parameter query requires solving an independent optimization problem of large scale, rendering parameter sweeps, sensitivity analyses, or real-time applications out of reach with standard solvers. The object of primary interest in this paper is therefore the parametric map which sends a parametric family of measures to the corresponding family of Kantorovich potentials. This is a fundamentally different perspective from the recent literature on MOR for transport-dominated PDEs~\cite{cagniart2018model,blickhan2024registration}, which uses OT as a tool to improve reduced basis constructions for PDE solutions that are merely dominated by advection; here, the optimal transport problem itself is parametric, and the Kantorovich potential is the quantity of  interest.

This shift in viewpoint introduces specific mathematical challenges. First, Kantorovich potentials are only defined up to an additive constant, so the solution map is multi-valued and must be normalized appropriately for a well-posed reduction problem. Second, the derivation of rigorous a posteriori error estimators for the Kantorovich potential — whose accuracy is typically measured in terms of the duality gap or the Wasserstein distance — requires adapting the residual-based methodology of classical RB methods to the OT setting. While the stability theory of Optimal Transport provides qualitative guidance, quantitative and computable error bounds for reduced approximations of Kantorovich potentials have not yet been established.

Recent efforts have begun to address the broader challenge of combining model reduction with optimal transport, forming a rich and rapidly growing literature. A pioneering contribution in this direction is~\cite{iollo2014advection}, who proposed to use solutions of Monge–Kantorovich mass transfer problems to extract advection modes — low-order representations of transport phenomena that are intrinsically elusive for standard POD-based methods. This seminal work demonstrated that OT provides a geometrically meaningful way to encode advection-dominated dynamics into reduced descriptions, and motivated a subsequent line of research on registration-based model reduction. This registration viewpoint is formalized in~\cite{taddei2020registration} into a general equation-independent framework for parameterized MOR, constructing parameter-dependent bijective mappings that align coherent solution features — shocks, shear layers — into a fixed reference configuration before applying linear compression. We refer the reader to extensions of this registration-based approach using OT in~\cite{iollo2022mapping,cucchiara2024model}. 
Complementing these registration-based approaches, the theoretical groundwork for nonlinear model reduction in Wasserstein spaces was laid in~\cite{ehrlacher2020nonlinear}, using Wasserstein barycenters and tangent-space approximations for conservative PDEs. Several extensions were considered, we refer the reader to~\cite{battisti2023wasserstein,blickhan2024registration, do2025sparse, dalery2026nonlinear, peherstorfer2020model}.  We highlight in particular the work~\cite{khamlich2025optimal} where the authors proposed a ROM framework integrating OT theory and neural-network methods via kernel POD with the Wasserstein distance.

However, all of these contributions — whether they use OT to construct registration mappings for PDE solutions, or address model reduction in Wasserstein spaces for PDE solutions — treat optimal transport as a computational tool for handling transport-dominated PDE fields. None of them addresses the problem of directly and efficiently computing parametric Kantorovich potentials themselves, which is the focus of the present work. The only work we are aware of in this direction is the contribution~\cite{lacombe2023learning} where parametrized optimal transport problems were solved by means of neural networks and machine learning techniques.

\paragraph{Contributions of this work.}

In this paper, we develop a novel reduced basis method for parametric optimal transport problems. More precisely, given a parametric family of optimal transport problems where the measures vary with a parameter, our framework constructs a low-dimensional approximation space for the associated family of Kantorovich potentials through an offline procedure (typically POD).  To the best of our knowledge, the reduced basis approximation of parametric Kantorovich potentials, with certified a posteriori error estimation, has not yet been addressed in the literature.  Our reduced-order model actually reads as a primal-dual approach with a reduced number of degrees of freedom. As a by-product, we also obtain reduced approximations of the parametric optimal transport plans.  The key contributions are: 
\begin{itemize}
    \item[(i)] a well-posed formulation of the parametric Kantorovich dual suited to RB approximation; 
    \item [(ii)] the derivation of computable and rigorous a posteriori error estimators enabling reliable online evaluation;
    \item[(iii)] an efficient offline-online decomposition that makes the online cost independent of the high-fidelity discretization dimension. 
\end{itemize}
   We demonstrate the efficiency and accuracy of the proposed approach on a range of parametric OT benchmarks, achieving speedups of several orders of magnitude, in particular for color transfer applications.

\paragraph{Outline.}
The remainder of the paper is organized as follows. In Section~\ref{section: High-fidelity model}, we present the high-fidelity parametric optimal transport problem to be reduced. In Section~\ref{section: RP}, we present the reduced order model. Section~\ref{section : A posteriori error approximator} is devoted to the derivation of a posteriori error estimators and their mathematical analysis. Lastly, in Section~\ref{sec:num}, we illustrate the efficiency of our approach on various numerical test cases, in particular for color transfer applications.


\section{High-fidelity Optimal Transport Problem}\label{section: High-fidelity model}

\subsection{Presentation of the prototypical problem}

In this section, we introduce the parametrized family of discrete optimal transport problems under consideration. We then highlight the continuous dependence of the optimal values with respect to the parameter.

Let $X= \left\lbrace x_1, \dots , x_{\cN_x}\right\rbrace$ and $Y = \left\lbrace y_1, \dots, y_{\cN_y}\right\rbrace$ be two finite sets of size $\cN_x \in \N ^*$ and $\cN_y \in \N^*$, respectively,  $c:X\times Y\rightarrow \RR_+$ be a continuous  cost function and $\displaystyle{\CC = \left\lbrace \CC_{ij} \right\rbrace_{\footnotesize \begin{matrix} 1 \leq i \leq \cN_x \\ 1\leq j \leq \cN_y \end{matrix}} \in \RR^{\cN_x \times \cN_y}}$ the associated cost matrix: $\CC_{i, j}:= c(x_i, y_j) $ for $1\leq i \leq \cN_x$ and for $1\leq j\leq \cN_y$.  

Let $K_x, K_y \in \N^*$, and let $\displaystyle{}\{\mmu_{k}\}_{1 \leq k \leq K_x}$ and  $\displaystyle{}\{\nnu_{l}\}_{1 \leq l \leq K_y}$ be two sets of probability measures on $X$ and $Y$ defined such that for $1\leq k \leq K_x$, $\displaystyle{} \mmu_{k} = \sum_{i = 1}^{\cN_x} \mmu_{k}^i \delta_{x_i}$ and for $1\leq l \leq K_y$, $\displaystyle{}\nnu_{l} = \sum_{j = 1}^{\cN_y} \nnu_{l}^j \delta_{y_j}$.
We define the set of parameters $\cA$ by:
\begin{equation}
    \cA := \left\lbrace \aalpha = (\aalpha^{x}, \aalpha^{y}) \in \RR_+^{K_x+ K_y}, \sum_{ k = 1}^{K_x}\aalpha_{k}^x = 1, \sum_{l = 1}^{K_y}\aalpha_{l}^y = 1 \right\rbrace.
\end{equation}
For a given parameter $\aalpha = (\aalpha^x, \aalpha^y) \in \cA$, we define the parametrised measures $\mmu:\cA\to\mathcal P(X)$ (respectively $\nnu:\cA\to\mathcal P(Y)$) as the convex combinations of $\{\mmu_{k}\}_{1 \leq k \leq K_x}$ (resp.  $\{\nnu_{l}\}_{1 \leq l \leq K_y}$) with respect to the weights $\aalpha^x$ (resp. $\aalpha^y$), that is
\begin{equation}
    \begin{array}{cc}
        \displaystyle{} \mmu(\aalpha) = \sum_{k = 1}^{K_x} \aalpha^x_{k} \mmu_{k},  \\
         \displaystyle \nnu(\aalpha) = \sum_{l= 1}^{K_y} \aalpha^y_{l} \nnu_{l}. 
    \end{array}
\end{equation}
The parametrized high-fidelity (HF) optimal transport problem writes as follows:
\begin{equation}\label{eq: primal matrix}
\displaystyle  I(\aalpha):= \mathop{\min}_{\begin{array}{cc}
&\PPi \in \RR_+^{\cN_x \times \cN_y} \\
&\displaystyle \sum_{j = 1}^{\cN_y} \PPi_{i,j} = \mmu_i(\aalpha) \text{ for } 1\leq i \leq \cN_x \\
&\displaystyle \sum_{i = 1}^{\cN_x} \PPi_{i,j} = \nnu_j(\aalpha) \text{ for } 1\leq j \leq \cN_y \\
\end{array}
}
\left\langle \PPi, \CC \right\rangle .
\end{equation}
Here, it will  be more convenient to rewrite this problem in the standard form  of  a
linear program, as in \cite{COTFNT}: we cast $\PPi \in \RR^{\cN_x \times \cN_y}$ (respectively $\CC\in \RR^{\cN_x \times \cN_y}$) as a vector $\ppi\in {\RR^{\cN_x \cN_y}}$ (resp. $\cc\in \RR^{\cN_x\cN_y}$) by stacking its columns, and define the matrix $\AA$, encoding the constraints, by
\begin{equation}\label{eq: A}
    \AA = \begin{pmatrix}
            \mathbb{1}^T_{\cN_x} \otimes \textbf{Id}_{\cN_y} \\ 
            \mathbf{Id}_{\cN_x} \otimes \11^T_{\cN_y} 
          \end{pmatrix}, 
\end{equation}
where $\mathbf{Id}_\bullet$ denotes the identity matrix of $\RR^\bullet$, $\mathbb{1}_\bullet$ the one vector of $\RR^\bullet$, and $\otimes$ the Kronecker product. The problem \eqref{eq: primal matrix} re-writes, then,  in its standard form:
\begin{equation}\label{eq: primal}\tag{$P_{\aalpha}$}
\boxed{I(\aalpha):= \mathop{\min}_{\begin{array}{c}
\ppi\in \RR_+^{\cN_x \cN_y}\\
\AA \ppi= \begin{pmatrix} \mmu(\aalpha) \\ \nnu(\aalpha)\end{pmatrix}
\end{array}
}
\left\langle \ppi, \cc \right\rangle .}
\end{equation}
The dual associated problem is : 
\begin{equation}\label{eq: dual}\tag{$D_{\aalpha}$}
\boxed{J(\aalpha) = \mathop{\max}_{ \begin{array}{c}
(\pphi,\ppsi) \in \RR^{\cN_x + \cN_y} \\
\AA^T \begin{pmatrix} \pphi \\ \ppsi \end{pmatrix} \leq \cc \\
\end{array}
} 
\left\langle \pphi, \mmu(\aalpha) \right\rangle + \left\langle \ppsi, \nnu(\aalpha) \right\rangle .}
\end{equation}

The problem \ref{eq: primal} always has solutions, as it is a minimization problem in finite dimensions of a continuous function on a non-empty, closed and bounded set. By strong duality, we deduce that the problem \ref{eq: dual} also has solutions and that for any parameter $\aalpha \in \cA$, the optimal values $I(\aalpha)$ and $J(\aalpha)$ are finite, and $I(\aalpha) = J(\aalpha)$.

\subsection{Regularity of the cost with respect to the parameter \texorpdfstring{$\aalpha$}{}}
We now turn our attention on  the dependence of the optimal values with respect to the parameter $\aalpha$, showing, in particular, that, $I(\aalpha)$ (or $J(\aalpha)$) is  continuous in $\aalpha$. Notice that we will exploit this property in Section \ref{subsection: erorr estimation continuity} to derive an a posteriori error estimation. 
To get the desired result, we first derive a bound on $|I(\aalpha)- I(\aalpha ')|$  which depends on the solutions of the dual problem \ref{eq: dual}  at $\aalpha \in \cA$ and $\aalpha ' \in \cA$.
\begin{prop} \label{prop: alpha dependency - bound with potentials}
    Let $\aalpha \in \cA$ and $\aalpha ' \in \cA$ be two parameters. Let $(\pphi_*(\aalpha), \ppsi_*(\aalpha))$ and $(\pphi_*(\aalpha '), \ppsi_*(\aalpha '))$ be solutions of \eqref{eq: dual} and $(D_{\aalpha'})$ at $\aalpha$ and $\aalpha '$ respectivly. Then:
    \begin{equation*}
        |J(\aalpha) - J(\aalpha ')| \leq C_\varphi \| \mmu(\aalpha) - \mmu(\aalpha ') \|_1 
        + C_\psi  \| \nnu(\aalpha) - \nnu(\aalpha ') \|_1,
    \end{equation*}
    where $C_\varphi=\max\big( \| \pphi_*(\aalpha) \|_\infty , \| \pphi_*(\aalpha ') \|_\infty  \big)$ and $C_\psi=\max\big( \| \ppsi_*(\aalpha) \|_\infty , \| \ppsi_*(\aalpha' ) \|_\infty  \big)$.
\end{prop}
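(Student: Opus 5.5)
The plan is to use only weak duality between the two parameters: the feasible set of \eqref{eq: dual}, namely $\{(\pphi,\ppsi) : \AA^T (\pphi,\ppsi)^T \leq \cc\}$, does not depend on $\aalpha$. Hence an optimal pair at one parameter is admissible (though not necessarily optimal) at the other, and this gives a one-sided bound in each direction.

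\textbf{Upper bound.} Since $(\pphi_*(\aalpha'), \ppsi_*(\aalpha'))$ is feasible for \eqref{eq: dual}, we have
\begin{equation*}
J(\aalpha) \geq \left\langle \pphi_*(\aalpha'), \mmu(\aalpha) \right\rangle + \left\langle \ppsi_*(\aalpha'), \nnu(\aalpha) \right\rangle .
\end{equation*}
Subtracting $J(\aalpha') = \left\langle \pphi_*(\aalpha'), \mmu(\aalpha') \right\rangle + \left\langle \ppsi_*(\aalpha'), \nnu(\aalpha') \right\rangle$ yields
\begin{equation*}
J(\aalpha') - J(\aalpha) \leq \left\langle \pphi_*(\aalpha'), \mmu(\aalpha') - \mmu(\aalpha) \right\rangle + \left\langle \ppsi_*(\aalpha'), \nnu(\aalpha') - \nnu(\aalpha) \right\rangle .
\end{equation*}
By Hölder's inequality between $\ell_\infty$ and $\ell_1$, the right-hand side is at most
\begin{equation*}
\| \pphi_*(\aalpha') \|_\infty \| \mmu(\aalpha) - \mmu(\aalpha') \|_1 + \| \ppsi_*(\aalpha') \|_\infty \| \nnu(\aalpha) - \nnu(\aalpha') \|_1 .
\end{equation*}

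\textbf{Lower bound.} Exchanging the roles of $\aalpha$ and $\aalpha'$, so that $(\pphi_*(\aalpha), \ppsi_*(\aalpha))$ is tested in $(D_{\aalpha'})$, gives the symmetric bound on $J(\aalpha) - J(\aalpha')$ with the norms of $\pphi_*(\aalpha)$ and $\ppsi_*(\aalpha)$.

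\textbf{Conclusion.} Bounding each $\ell_\infty$ norm by the respective maximum $C_\varphi$ or $C_\psi$ controls both $J(\aalpha) - J(\aalpha')$ and $J(\aalpha') - J(\aalpha)$ by the same quantity, which gives the stated estimate on $|J(\aalpha) - J(\aalpha')|$. Strong duality, recalled just before the statement, ensures that these values equal $I(\aalpha)$ and $I(\aalpha')$, though the statement itself concerns only $J$.

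There is no real obstacle here. The only point to check is that the dual constraint set is parameter-independent, which is clear because $\AA$ and $\cc$ do not depend on $\aalpha$.
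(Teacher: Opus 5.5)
Your proof is correct and follows essentially the same route as the paper. Both arguments use the $\aalpha$-independence of the dual feasible set to test each optimal pair in the other problem, obtaining one-sided bounds on $J(\aalpha)-J(\aalpha')$ in each direction. Both then apply the $\ell_\infty$--$\ell_1$ H\"older inequality and bound each norm by $C_\varphi$ and $C_\psi$.
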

\begin{proof}
    The pair $(\pphi_*(\aalpha), \ppsi_*(\aalpha))$ is optimal for \eqref{eq: dual}, and also admissible for $(D_{\aalpha'})$. Thus
    \begin{equation}\label{eq: proof  3.11}
        J(\aalpha) - J(\aalpha') \leq 
        \left\langle \pphi_*(\aalpha), \mmu(\aalpha) - \mmu(\aalpha ') \right\rangle +
        \left\langle \ppsi_*(\aalpha), \nnu(\aalpha) - \nnu(\aalpha ') \right\rangle.
    \end{equation}
    By analogy, since $(\pphi_*(\aalpha '), \ppsi_*(\aalpha '))$ is optimal for $(D_{\aalpha'})$, and admissible for $(D_{\aalpha})$, we obtain :
    \begin{equation}\label{eq: proof 3.1 2}
        J(\aalpha) - J(\aalpha') \geq 
        \left\langle \pphi_*(\aalpha ' ), \mmu(\aalpha) - \mmu(\aalpha ') \right\rangle +
        \left\langle \ppsi_*(\aalpha '), \nnu(\aalpha) - \nnu(\aalpha ') \right\rangle.
    \end{equation}
    Now, using norm inequalities, we bound from above the absolute value of the right hand side terms of  \eqref{eq: proof  3.11} and \eqref{eq: proof 3.1 2}.

    \begin{align}\label{eq: proof 3.1 3}
        | \left\langle \pphi_*(\aalpha), \mmu(\aalpha) - \mmu(\aalpha ') \right\rangle +
        &\left\langle \ppsi_*(\aalpha), \nnu(\aalpha) - \nnu(\aalpha ') \right\rangle |\leq\\
        &\| \pphi_*(\aalpha) \|_\infty \| \mmu(\aalpha) - \mmu(\aalpha ') \|_1 +  \| \ppsi_*(\aalpha) \|_\infty \| \nnu(\aalpha) - \nnu(\aalpha ') \|_1,
    \end{align}

    and
       \begin{align}\label{eq: proof 3.1 4}
        | \left\langle \pphi_*(\aalpha '), \mmu(\aalpha) - \mmu(\aalpha ') \right\rangle +
        &\left\langle \ppsi_*(\aalpha '), \nnu(\aalpha) - \nnu(\aalpha ') \right\rangle |\leq\\
         &\| \pphi_*(\aalpha ') \|_\infty \| \mmu(\aalpha) - \mmu(\aalpha ') \|_1 +  \| \ppsi_*(\aalpha ') \|_\infty \| \nnu(\aalpha) - \nnu(\aalpha ') \|_1.
    \end{align}
    Since the right hand sides of \eqref{eq: proof 3.1 3} and \eqref{eq: proof 3.1 4} are bounded from above by 

    \[
    \max\big( \| \pphi_*(\aalpha) \|_\infty , \| \pphi_*(\aalpha ') \|_\infty  \big) \| \mmu(\aalpha) - \mmu(\aalpha ') \|_1 
        +  \max\big( \| \ppsi_*(\aalpha) \|_\infty , \| \ppsi_*(\aalpha' ) \|_\infty  \big) \| \nnu(\aalpha) - \nnu(\aalpha ') \|_1,    
    \]%
    we get the desired result.
\end{proof}
Notice that the bound given in \eqref{prop: alpha dependency - bound with potentials} depends on one hand on the $\ell_1$ norm of the difference of parametrized marginals, and we prove in the following lemma that this difference is bounded by a term depending continuously on $\aalpha$. On the other hand, it depends on the $\ell_\infty$ norms of Kantorovitch potentials, and we explain in the sequel how to bound it using $c$-transforms.
\begin{lem} \label{lemma: bound on the marginals}
    Let $\aalpha  \in \cA$ and $\aalpha ' \in \cA$ be parameters. Then
    \begin{equation}
        \| \mmu(\aalpha) - \mmu(\aalpha ') \|_1 \leq K_x \|\aalpha - \aalpha ' \|_\infty , 
    \end{equation}
    and
        \begin{equation}
        \| \nnu(\aalpha) - \nnu(\aalpha ') \|_1 \leq K_y \|\aalpha - \aalpha ' \|_\infty 
    \end{equation}
\end{lem}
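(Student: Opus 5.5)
We will prove the bound for $\mmu$ directly from the definition of the parametrized marginals; the bound for $\nnu$ follows by the same argument with $K_x$, $\mmu_k$, $\aalpha^x$ replaced by $K_y$, $\nnu_l$, $\aalpha^y$.

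\emph{Step 1: linearity in the parameter.} By definition of $\mmu(\aalpha)$, we have
\begin{equation*}
\mmu(\aalpha) - \mmu(\aalpha') = \sum_{k=1}^{K_x} (\aalpha^x_k - \aalpha'^x_k)\, \mmu_k .
\end{equation*}

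\emph{Step 2: triangle inequality.} Applying the triangle inequality for $\|\cdot\|_1$ gives
\begin{equation*}
\| \mmu(\aalpha) - \mmu(\aalpha ') \|_1 \leq \sum_{k=1}^{K_x} |\aalpha^x_k - \aalpha'^x_k| \, \|\mmu_k\|_1 .
\end{equation*}

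\emph{Step 3: probability weights.} Each $\mmu_k$ is a probability measure on $X$, so its weights $\mmu_k^i$ are nonnegative and sum to one. Hence $\|\mmu_k\|_1 = 1$, and
\begin{equation*}
\| \mmu(\aalpha) - \mmu(\aalpha ') \|_1 \leq \sum_{k=1}^{K_x} |\aalpha^x_k - \aalpha'^x_k| .
\end{equation*}

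\emph{Step 4: passing to $\|\cdot\|_\infty$.} Each term satisfies $|\aalpha^x_k - \aalpha'^x_k| \leq \|\aalpha - \aalpha'\|_\infty$, where the maximum is taken over all $K_x+K_y$ coordinates of $\aalpha-\aalpha'$. There are $K_x$ terms in the sum, so the right-hand side is at most $K_x \|\aalpha - \aalpha'\|_\infty$, which is the claimed bound.

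The argument is a direct computation. The only points needing care are to state explicitly that $\|\mmu_k\|_1=1$ because the $\mmu_k$ are probability vectors, and that $\|\aalpha-\aalpha'\|_\infty$ refers to the full parameter vector in $\RR^{K_x+K_y}$, which dominates the $x$-block (and likewise the $y$-block).
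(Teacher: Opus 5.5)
Your proof is correct and follows the same route as the paper. You expand $\mmu(\aalpha)-\mmu(\aalpha')$ linearly in the weights, apply the triangle inequality with $\|\mmu_k\|_1 = 1$, and bound each coefficient by $\|\aalpha-\aalpha'\|_\infty$; your intermediate sum $\sum_k |\aalpha^x_k - \aalpha'^x_k|$ is simply a more explicit version of the paper's single step $\|\aalpha^x-\aalpha'^x\|_\infty \sum_k \|\mmu_k\|_1$.
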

\begin{proof}
    By definition, 
    \[
        \mmu(\aalpha) - \mmu(\aalpha ') = \sum_{k=1}^{K_x} (\aalpha^x_{k} - \aalpha '^{x}_{k}) \mmu_{k}.
    \]
    Then, since the $\mmu_{k}$ are probability measures for all $1\leq k \leq K_x$, we get 
    \[\| \mmu(\aalpha) - \mmu(\aalpha ') \|_1  \leq \|\aalpha^x - \aalpha'^x \|_\infty \sum_{k=1}^{K_x} \| \mmu_{k} \|_1 \leq K_x \|\aalpha - \aalpha' \|_\infty.\]
    %

  
  The bound on $ \| \nnu(\aalpha) - \nnu(\aalpha ') \|_1 $ is obtained analogously.
\end{proof}
In what follows, we use  $c$-transforms (see e.g. Chapter 1 of \cite{santambrogio2015optimal})   to build potentials $(\pphi(\aalpha), \ppsi(\aalpha))$ that are solution of \eqref{eq: dual}, and whose norms are bounded by a constant that does not depend on $\aalpha$. Let us first recall the definition of a $c$-transform in the discrete setting:
\begin{defi}\label{def: $c$-transform}
    The discrete $c$-transform of a vector $ \pphi \in \RR^{\cN_x} $ is the vector $ \pphi^c \in \RR^{\cN_y}$ defined for every $1 \leq j \leq \cN_y$ by :
    \begin{equation}
        \pphi^c_j := \min_{1 \leq i \leq \cN_x} \CC_{i,j} - \pphi_i.
    \end{equation}
    The discrete $\bar c$-transform of a vector  $ \ppsi \in \RR^{\cN_y} $ is the vector $ \ppsi^{\bar{c}} \in \RR^{\cN_x}$ defined for every $1 \leq i \leq \cN_x$ by :
    \begin{equation}
        \ppsi^{\bar c}_i := \min_{1 \leq j \leq \cN_y} \CC_{i,j} - \ppsi_j.
    \end{equation}
\end{defi}
One interest of the $c$-transform is that, given an admissible pair $(\pphi(\aalpha), \ppsi(\aalpha))$ of  \eqref{eq: dual}, the pair $(\pphi(\aalpha), \pphi^c(\aalpha))$ is still admissible and increases the cost (the same holds for $\bar c$-transforms). Thus, given a solution $(\pphi_*(\aalpha), \ppsi_*(\aalpha))$, one can assume that it is virtually improved by $c$-transform and $\bar c$-transform. Moreover, $c$-transforms share the same modulus of continuity of the cost function, as we show in the following.
\begin{lem} \label{lemma: $c$-transform continuity}
    Let $ \pphi \in \RR^{\cN_x} $. Then, denoting by $\pphi^c = (\pphi_j^c)_{1 \leq j \leq \cN_y}$ the $c$-transform of $\pphi$, it holds that, for $1 \leq j, j' \leq \cN_y$,
    \begin{equation}
        | \pphi^c_j - \pphi^c_{j'} | \leq 2 \| \CC \|_\infty.
    \end{equation}
\end{lem}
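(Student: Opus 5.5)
We compare the two minima defining $\pphi^c_j$ and $\pphi^c_{j'}$ directly, using that they range over the same index set $i$. Fix $1 \leq j, j' \leq \cN_y$. For every $1 \leq i \leq \cN_x$ we write
\[
\CC_{i,j} - \pphi_i = \left(\CC_{i,j'} - \pphi_i\right) + \left(\CC_{i,j} - \CC_{i,j'}\right) \leq \left(\CC_{i,j'} - \pphi_i\right) + 2\|\CC\|_\infty .
\]
Here $\|\CC\|_\infty$ denotes the maximum absolute entry of $\CC$, so $|\CC_{i,j} - \CC_{i,j'}| \leq |\CC_{i,j}| + |\CC_{i,j'}| \leq 2\|\CC\|_\infty$. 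In fact, since $c \geq 0$, the sharper bound $\|\CC\|_\infty$ would hold, but $2\|\CC\|_\infty$ is what is claimed.

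Next we take the minimum over $i$ on both sides. The left-hand side becomes $\pphi^c_j$, and the right-hand side becomes $\pphi^c_{j'} + 2\|\CC\|_\infty$, because adding a constant commutes with the minimum. This gives
\[
\pphi^c_j \leq \pphi^c_{j'} + 2\|\CC\|_\infty .
\]
More carefully: let $i'$ attain the minimum defining $\pphi^c_{j'}$. Then $\pphi^c_j \leq \CC_{i',j} - \pphi_{i'} \leq \CC_{i',j'} - \pphi_{i'} + 2\|\CC\|_\infty = \pphi^c_{j'} + 2\|\CC\|_\infty$.

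Exchanging the roles of $j$ and $j'$ gives the reverse inequality, and together the two yield $|\pphi^c_j - \pphi^c_{j'}| \leq 2\|\CC\|_\infty$.

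There is no real obstacle in this argument. The only points to watch are that the two minima range over the same finite index set, so a minimizer exists, and that the norm convention for $\|\CC\|_\infty$ is the entrywise maximum. The bound is independent of $\pphi$, and this is what will later allow us to control $\|\ppsi_*(\aalpha)\|_\infty$ uniformly in $\aalpha$, after normalizing the additive constant.
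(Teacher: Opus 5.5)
Your proof is correct and follows the paper's argument exactly. Both use the pointwise bound $\CC_{i,j} - \pphi_i \leq \CC_{i,j'} - \pphi_i + 2\|\CC\|_\infty$ for each $i$, minimize over $i$, and then swap $j$ and $j'$. Your side remark is also right: since the entries of $\CC$ lie in $[0, \|\CC\|_\infty]$, the constant $2\|\CC\|_\infty$ could be sharpened to $\|\CC\|_\infty$.
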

\begin{proof}
    Let $1 \leq j, j' \leq \cN_y$, then for every $1 \leq i \leq \cN_x$, 
    \begin{equation}
        \CC_{ij} - \pphi_i^c \leq \CC_{ij'} - \pphi_i + 2 \| \CC \|_\infty.
    \end{equation}
    passing to the minimum in $i$ in the left hand side of the inequality, and then in the right hand side, we obtain :
    \begin{equation}
        \pphi^c_j \leq \pphi^c_{j'} + 2 \| \CC\|_\infty,
    \end{equation}
    and by interverting the roles of $j$ and $j'$, we obtain the desired result.
\end{proof}
\begin{rmk} \label{rmk : bar $c$-transform continuity}
    We also have that, for $ \ppsi \in \RR^{\cN_y} $, for $1 \leq i, i' \leq \cN_x$,
    \begin{equation}
        | \ppsi^c_i - \ppsi^c_{i'} | \leq 2 \| \CC \|_\infty.
    \end{equation}
\end{rmk}
\begin{cor} \label{cor: bound on potential}
    For any parameter $\aalpha \in \cA$, there exists a solution $(\pphi_*(\aalpha), \ppsi_*(\aalpha))$ satisfying
    \begin{equation}
        0\leq \pphi_*(\aalpha) \leq 2 \| \CC\|_\infty, 
    \end{equation}
    and
    \begin{equation}
        - 3\| \CC \|_\infty \leq \ppsi_*(\aalpha) \leq  \| \CC \|_\infty.
    \end{equation}
    In particular, $\|\pphi_*(\aalpha)\|_\infty \leq 2\| \CC\|_\infty$ and $\|\ppsi_*(\aalpha)\|_\infty \leq 3\| \CC\|_\infty$.
\end{cor}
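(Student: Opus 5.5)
Starting from any solution $(\pphi_0,\ppsi_0)$ of \eqref{eq: dual}, which exists by strong duality, I will produce a normalized solution through a double $c$-transform followed by a constant shift. First set $\ppsi_1 := \pphi_0^c$. Then $(\pphi_0,\ppsi_1)$ is admissible and its dual cost is at least that of $(\pphi_0,\ppsi_0)$, so it is again optimal. Next set $\pphi_1 := \ppsi_1^{\bar c}$. The pair $(\pphi_1,\ppsi_1)$ is admissible because $\pphi_{1,i}+\ppsi_{1,j}\leq \CC_{i,j}$ by definition of the minimum, and it is still optimal by the same monotonicity argument. The key structural fact is that $\pphi_1$ is a $\bar c$-transform. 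By Remark~\ref{rmk : bar $c$-transform continuity}, its oscillation satisfies $\max_i \pphi_{1,i} - \min_i \pphi_{1,i} \leq 2\|\CC\|_\infty$.

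Next I will use the invariance of the dual under the shift $(\pphi,\ppsi)\mapsto(\pphi - t\11,\ppsi + t\11)$. This shift preserves admissibility, and it preserves the objective because $\mmu(\aalpha)$ and $\nnu(\aalpha)$ both have total mass $1$. Choosing $t=\min_i \pphi_{1,i}$, I define $\pphi_* := \pphi_1 - t\11$ and $\ppsi_* := \ppsi_1 + t\11$. Then $\pphi_*\geq 0$ with $\min_i \pphi_{*,i}=0$, and the oscillation bound gives $\pphi_*\leq 2\|\CC\|_\infty$.

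It remains to bound $\ppsi_*$. The main point is to check that $\ppsi_*$ equals $\pphi_*^c$, or at least that its bounds can be obtained directly. Since $\ppsi_1 = \pphi_0^c$ is unchanged by the second step, I cannot immediately claim $\ppsi_1 = \pphi_1^c$. To avoid this issue, I will perform one more $c$-transform and replace $\ppsi_*$ by $\pphi_*^c$. This keeps the pair optimal, and $\pphi_*$ is unchanged.

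Now I bound $\ppsi_{*,j} = \min_i (\CC_{i,j}-\pphi_{*,i})$ directly. Choosing the index $i_0$ where $\pphi_{*,i_0}=0$ gives $\ppsi_{*,j}\leq \CC_{i_0,j}\leq \|\CC\|_\infty$. For the lower bound, $\CC_{i,j}\geq -\|\CC\|_\infty$ and $\pphi_{*,i}\leq 2\|\CC\|_\infty$ give $\ppsi_{*,j}\geq -3\|\CC\|_\infty$. Since $c\geq 0$, this is in fact crude, since one even gets $\geq -2\|\CC\|_\infty$, but it matches the stated bound. The $\ell_\infty$ bounds then follow immediately.

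I do not expect a real obstacle. The only point needing care is the bookkeeping that each transform preserves optimality and admissibility. This relies on the remark preceding Lemma~\ref{lemma: $c$-transform continuity} that $c$- and $\bar c$-transforms produce admissible pairs with no smaller dual value, together with the mass-one normalization that justifies the constant shift.
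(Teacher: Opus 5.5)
Your proof is correct and follows the same route as the paper: improve an optimal pair by $c$- and $\bar c$-transforms, use the $2\|\CC\|_\infty$ oscillation bound together with the mass-one shift to normalize $\min_i \pphi_{*,i}=0$, and then bound $\ppsi_*$ through its $c$-transform formula. Your bookkeeping is more explicit than the paper's ``without loss of generality'' step, but the argument is the same; the final extra $c$-transform you add is harmless and in fact redundant, since $\pphi^{c\bar c c}=\pphi^c$.
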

\begin{proof}
    Let $(\pphi_*(\aalpha), \ppsi_*(\aalpha))$ be a solution of \eqref{eq: dual}. Without loss of generality, we can improve this pair via $c$-transforms, so that they satisfy the continuity bound given in lemma \ref{lemma: $c$-transform continuity} and remark \ref{rmk : bar $c$-transform continuity}. Moreover, since for every constant $\lambda \in \RR$ , $(\pphi_*(\aalpha) - \lambda, \ppsi_*(\aalpha) + \lambda)$ is still an optimal pair, we can suppose that  $\displaystyle  \min_{1\leq i \leq \cN_x} \pphi_*(\aalpha)_i = 0$. We thus obtain that for $1 \leq i \leq \cN_x $, 
    \begin{equation}
        |\pphi_*(\aalpha)_i - 0|\leq 2 \| \CC \|_\infty.
    \end{equation}
    And since $\pphi_*(\aalpha) \geq 0$, we obtain the bound on $\pphi_*(\aalpha)$ :
    \begin{equation*}
        0 \leq \pphi_*(\aalpha) \leq 2 \| \CC \|_\infty.
    \end{equation*}
    In order to bound $\ppsi_*(\aalpha)$, we use the fact that it is defined as the $\bar c$-transform of $\pphi_*(\aalpha)$, that is, for every $1 \leq j \leq \cN_y$, 
    \begin{equation*}
        \ppsi_*(\aalpha)_j = \min_{1\leq i \leq \cN_x} \CC_{ij} - \pphi_*(\aalpha)_i.
    \end{equation*}
   and we deduce from the bound on $\pphi_*(\aalpha)$:
   \begin{equation*}
       \min_{1\leq i \leq \cN_x} \CC_{ij} - 2 \| \CC \|_\infty \leq \ppsi_*(\aalpha)_j \leq \min_{1\leq i \leq \cN_x} \CC_{ij}.
   \end{equation*}
    We deduce
    \begin{equation*}
        - 3 \| \CC \|_\infty \leq \ppsi_*(\aalpha)_j \leq \| \CC \|_\infty.
   \end{equation*}
\end{proof}
Now, we aggregate the previous results to show that $J(\aalpha)$ depends continuously on the parameter $\aalpha$.

\begin{rmk}\label{rmk: bound on potential}
    In the proof of Corollary \ref{cor: bound on potential}, the roles of $\pphi_*(\alpha)$ and $\ppsi_*(\aalpha)$ can be inverted. Thus we can also conclude that for any parameter $\aalpha \in \cA$, there exists a solution $(\pphi_*(\aalpha), \ppsi_*(\aalpha))$ satisfying 
    \begin{equation}
        0\leq \ppsi_*(\aalpha) \leq 2 \| \CC\|_\infty, 
    \end{equation}
    and
    \begin{equation}
        - 3\| \CC \|_\infty \leq \pphi_*(\aalpha) \leq  \| \CC \|_\infty.
    \end{equation}

\end{rmk}
\begin{prop} \label{prop: contiuity with respect to alpha}
    Let $\aalpha \in \cA$ and $\aalpha' \in \cA$ be two parameters. Then 
    \begin{equation*}
        |J(\aalpha) - J(\aalpha')| \leq \|\CC\|_\infty \left( 2 \max(K_x, K_y)+ 3 \min(K_x + K_y) \right) \|\aalpha  - \aalpha '\|_\infty .
    \end{equation*}
\end{prop}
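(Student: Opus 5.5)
The plan is to combine Proposition~\ref{prop: alpha dependency - bound with potentials} with Lemma~\ref{lemma: bound on the marginals}, feeding in the uniformly bounded optimal potentials of Corollary~\ref{cor: bound on potential} or Remark~\ref{rmk: bound on potential}. Throughout, I read the term $3\min(K_x+K_y)$ in the statement as $3\min(K_x,K_y)$, which is presumably a typo; this is exactly what the argument below produces.

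\textbf{Key point.} Proposition~\ref{prop: alpha dependency - bound with potentials} holds for \emph{any} choice of optimal dual pairs at $\aalpha$ and at $\aalpha'$. So we are free to pick, at both parameters, the normalized solutions given by one of the two earlier results. Which one we pick will depend on whether $K_x \geq K_y$ or $K_y > K_x$.

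\textbf{Case $K_x \geq K_y$.} Take $(\pphi_*(\aalpha),\ppsi_*(\aalpha))$ and $(\pphi_*(\aalpha'),\ppsi_*(\aalpha'))$ as in Corollary~\ref{cor: bound on potential}.
\begin{itemize}
\item The corollary gives $\|\pphi_*\|_\infty\leq 2\|\CC\|_\infty$ and $\|\ppsi_*\|_\infty\leq 3\|\CC\|_\infty$ at both parameters, so $C_\varphi\leq 2\|\CC\|_\infty$ and $C_\psi\leq 3\|\CC\|_\infty$.
\item Proposition~\ref{prop: alpha dependency - bound with potentials} then yields $|J(\aalpha)-J(\aalpha')|\leq 2\|\CC\|_\infty\|\mmu(\aalpha)-\mmu(\aalpha')\|_1+3\|\CC\|_\infty\|\nnu(\aalpha)-\nnu(\aalpha')\|_1$.
\item Lemma~\ref{lemma: bound on the marginals} bounds the two marginal differences by $K_x\|\aalpha-\aalpha'\|_\infty$ and $K_y\|\aalpha-\aalpha'\|_\infty$.
\item The result is $\|\CC\|_\infty(2K_x+3K_y)\|\aalpha-\aalpha'\|_\infty$. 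Since $K_x\geq K_y$, this equals $\|\CC\|_\infty\left(2\max(K_x,K_y)+3\min(K_x,K_y)\right)\|\aalpha-\aalpha'\|_\infty$.
\end{itemize}

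\textbf{Case $K_y > K_x$.} Use instead the solutions of Remark~\ref{rmk: bound on potential}, where the roles are swapped.
\begin{itemize}
\item Now $C_\psi\leq 2\|\CC\|_\infty$ and $C_\varphi\leq 3\|\CC\|_\infty$.
\item The same two steps give the bound $\|\CC\|_\infty(3K_x+2K_y)\|\aalpha-\aalpha'\|_\infty$, which again equals $\|\CC\|_\infty\left(2\max(K_x,K_y)+3\min(K_x,K_y)\right)\|\aalpha-\aalpha'\|_\infty$.
\end{itemize}

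\textbf{Main obstacle.} There is no real analytical difficulty. The only subtle point is justifying that the same normalization can be used simultaneously at $\aalpha$ and $\aalpha'$. This is fine because the bounds in Corollary~\ref{cor: bound on potential} and Remark~\ref{rmk: bound on potential} are uniform in the parameter, and the proof of Proposition~\ref{prop: alpha dependency - bound with potentials} only uses optimality and admissibility of each pair, never uniqueness.
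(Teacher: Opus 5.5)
Your proof is correct and is essentially the paper's: the paper also applies Proposition~\ref{prop: alpha dependency - bound with potentials} twice. With the potentials of Corollary~\ref{cor: bound on potential} it gets $\|\CC\|_\infty(2K_x+3K_y)\|\aalpha-\aalpha'\|_\infty$, with those of Remark~\ref{rmk: bound on potential} it gets $\|\CC\|_\infty(3K_x+2K_y)\|\aalpha-\aalpha'\|_\infty$, and it takes the minimum, which is exactly the bound your case split on $K_x \geq K_y$ selects. Your reading of $3\min(K_x+K_y)$ as $3\min(K_x,K_y)$ is also the intended one, as the later use of this bound in Section~\ref{subsection: erorr estimation continuity} confirms.
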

\begin{proof}
    Let $(\pphi_*(\aalpha), \ppsi_*(\aalpha))$ and $(\pphi_*(\aalpha '), \ppsi_*(\aalpha '))$  be a pair of potential solutions to \ref{eq: dual} at the parameter $\aalpha$. They satisfy the bound of property \ref{prop: alpha dependency - bound with potentials}:
 \begin{equation*}
        |J(\aalpha) - J(\aalpha')| \leq \max\big( \| \pphi_*(\aalpha) \|_\infty , \| \pphi_*(\aalpha ') \|_\infty  \big) \| \mmu(\aalpha) - \mmu(\aalpha ') \|_1 
        +  \max\big( \| \ppsi_*(\aalpha) \|_\infty , \| \ppsi_*(\aalpha' ) \|_\infty  \big) \| \nnu(\aalpha) - \nnu(\aalpha ') \|_1
    \end{equation*}
    We can suppose without loss of generality that both pairs of potentials satisfy the conditions of corollary \ref{cor: bound on potential}, and thus
     \begin{equation*}
        |J(\aalpha) - J(\aalpha')| \leq  2 \| \CC \|_\infty \| \mmu(\aalpha) - \mmu(\aalpha ') \|_1 
        + 3 \| \CC \|\infty \| \nnu(\aalpha) - \nnu(\aalpha ') \|_1.
    \end{equation*}
    Using the continuous dependency of the differences marginals built in lemma \ref{lemma: bound on the marginals}, we obtain
    \begin{equation*}\label{eq: proof kxky1}
        |J(\aalpha) - J(\aalpha')| \leq \|\CC\|_\infty \left( 2 K_x+ 3  K_y \right) \|\aalpha  - \aalpha '\|_\infty.
    \end{equation*}
    Now, we can also find a pair $\left( \pphi_*(\aalpha), \ppsi_*(\aalpha) \right)$ satisfying Remark \ref{rmk: bound on potential}. Again, using Lemma \ref{lemma: bound on the marginals}, we obtain 
    \begin{equation*}\label{eq: proof kxky2}
        |J(\aalpha) - J(\aalpha')| \leq \|\CC\|_\infty \left( 2 K_y+ 3  K_x \right) \|\aalpha  - \aalpha '\|_\infty.
    \end{equation*}
    Taking the minimum of the right hand
\end{proof}
Now, our aim is to solve the problem \eqref{eq: primal} for many parameters $\aalpha \in \cA$. However, in high dimensions, solving \eqref{eq: primal} for only one parameter may already be computationally onerous. In the following section, we build a problem of smaller dimensions that approximates \eqref{eq: primal}, namely the reduced-order model, which enables us to approximate solutions of \eqref{eq: primal} at a computational cost that does not depend on the dimensions of the problem \eqref{eq: primal}.

\section{Reduced-order model for the discrete optimal transport problem and its dual}\label{section: RP}
In this section, we introduce  the reduced-order linear 
programs that approximate the problems \eqref{eq: primal} and 
\eqref{eq: dual}.  It consists in building a non-negative cone 
(respectively two vector spaces) of smaller dimension by using 
snapshots methods on the primal problem \eqref{eq: primal} 
(respectively the dual \eqref{eq: dual}), and to rewrite the 
problem where the minimization is performed on these cone or spaces.

We begin by explaining how these cone and spaces are defined. We 
then build the reduced-order model, which reads as a linear program 
with a small number of variables and constraints.
We then provide sufficient conditions under which the reduced-order model has a least one solution. Finally, we propose an 
alternative method to build subspaces $\cU$ and $\cV$ of small 
dimension. 

\subsection{Construction of the reduced bases}
For the approximation of the primal solutions is selected in a reduced non-negative cone $\mathcal{W}_+ \subset \RR_+^{\cN_x \cN_y}
$ spanned by a family of vectors $\left\lbrace \ww_r \right\rbrace_{1 \leq r \leq R} \subset \RR_+^{\cN_x \cN_y}$ for $R \ll \cN_x  \cN_y$. Concerning  the dual solutions we approximate them by looking at the elements of reduced subspaces $\mathcal{U} \subset \RR^{\cN_x}$ and $\mathcal{V} \subset \RR^{\cN_y}$ spanned by two families of vectors $\left\lbrace \uu_n \right\rbrace_{1 \leq n \leq N} \subset \RR^{\cN_x}$  and  $\left\lbrace \vv_m \right\rbrace_{1 \leq m \leq M} \subset \RR^{\cN_y}$ for $N \ll \cN_x $ and $M \ll \cN_y$ .

The reduced variables will then be written as :
\begin{equation}\label{eq: reduced solutions}
\begin{array}{ll}
    \displaystyle \widehat{\ppi}(\aalpha) = \sum_{r=1}^R \pp_r(\aalpha) \ww_r \; \text{with } \pp(\aalpha)\in \RR^R_+ \\
    \displaystyle  \widehat{\pphi}(\aalpha) = \sum_{n=1}^N \aa_n(\aalpha) \uu_n \; \text{with }  \aa(\aalpha) \in \RR^N \\
    \displaystyle  \widehat{\ppsi}(\aalpha) = \sum_{m=1}^M \bb_m(\aalpha) \vv_m \; \text{with }  \bb(\aalpha) \in \RR^M \\
\end{array}
\end{equation}
In the following, we will denote by $\widehat{\WW}$ the matrix whose columns are the vectors $\left\lbrace \ww_r \right\rbrace_{1 \leq r \leq R}$, and by $\widehat{\UU}$ (respectively $\widehat{\VV}$) the matrix composed by $\left\lbrace \uu_n \right\rbrace_{1\leq n \leq N}$ (resp.
by  $\left\lbrace \vv_m \right\rbrace_{1 \leq m \leq M}$).

Using these notations, the reduced solution may be rewritten as matrix-vector multiplication:
\begin{equation}\label{eq: reduced solutions matrix}
\begin{array}{ll}
    \widehat{\ppi}(\aalpha) = \widehat{\WW} \pp(\aalpha) \\
    \widehat{\pphi}(\aalpha) = \widehat{\UU} \aa(\aalpha) \\
    \widehat{\ppsi}(\aalpha) = \widehat{\VV} \bb(\aalpha) \\
\end{array}
\end{equation}
\paragraph{The Primal reduced space \texorpdfstring{$\mathcal{W}_+$}{}} \label{subsubsection : primal reduced space W}
We admit here that the set of parameters $\cA$ is a convex set generated by a finite number of extreme points. Let $\cA_{\text{ext}} \subset \cA$ be the set of extreme points of $\cA$.
Let  $\cA_{\text{train}}^P$ be a finite training set  containing the set of extreme points, namely $\cA_{\text{ext}} \subset \cA_{\text{train}}^P \subset \cA$.
 Then $\cW_+ $ is defined as the cone generated by the solutions to the problem \eqref{eq: primal} for all the parameters $\aalpha \in \cA_{\text{train}}^P$, that is 
 \begin{equation}
      \cW_+ := \text{cone}_+ \left\lbrace {\ppi}_*(\aalpha) \right\rbrace_{\alpha \in \mathcal{A}_{\text{train}}^P} ,
 \end{equation}
and $\widehat{\WW}$ is the matrix formed by stacking the transport plans $\left\lbrace \ppi_*(\aalpha) \right\rbrace_{\alpha \in \mathcal{A}_{\text{train}}^P}$ columnwise.
We explain in Section \ref{subsection: non emptyness} that the  inclusion $\cA_{\text{ext}} \subset \cA_{\text{train}}^P$ guarantees the non-emptiness of the feasible set.

Note that, as usual in basis reduction,  it is possible to reduce the dimension of $\cW_+$ with a non-negative matrix factorization. In this case, the solutions at the extreme points $\cA_{\text{ext}}$ must be added to $\widehat{\WW}$.
%
\paragraph{The Dual reduced spaces \texorpdfstring{$\mathcal{U}$ and $\mathcal{V}$}{} }
Let us assume that we are given a data set of dual solutions $\left\lbrace (\pphi_*(\aalpha), \ppsi_*(\aalpha) ) \right\rbrace_{\aalpha \in \cA_{\text{train}}^D}$ for a finite training subset $\cA_{\text{train}}^D \subset \cA$. 
The dual reduced space $\cU$ (resp. $\cV$) is the space generated by $\left\lbrace (\pphi_*(\aalpha) \right\rbrace_{\aalpha \in \cA_{\text{train}}^D}$ and $\11_{\cN_x}$ (resp. by $\left\lbrace (\ppsi_*(\aalpha) \right\rbrace_{\aalpha \in \cA_{\text{train}}^D}$ and $\11_{\cN_y}$ ). We explain in Section \ref{subsection: compactness} why the $\11_{\bullet}$ vectors are added. We propose later in Section \ref{subsection : small U and V} an alternative way of building $\cU$ and $\cV$, as the corresponding method requires further understanding of the reduced-order model.

\subsection{The reduced-order model}
We now explain how to use the reduced cones and bases in order to obtain a reduced-order model. In particular we  proceed in two steps: (1) we  build a semi-reduced-order model, where the ambient space  of the high-fidelity primal \eqref{eq: primal} is reduced (2) we further reduce the dimension of the space variable of the dual of this semi-reduced-order model.

Notice that by inverting the two steps (we first reduce the dimension of the variables of the dual and then reduce the dimension of the variables of the associated primal) we obtain in the end the same fully reduced-order model, so only the semi-reduced-order models will differ. We explain how both of them are obtained in what follows.
\paragraph{Reduction on Primal-then-Dual}
We propose here a semi-reduced-order model where the variables of the primal are searched in a non-negative cone of smaller dimension. It is obtained as rewriting \eqref{eq: primal} with the added constraint that the solutions should belong to the non-negative cone $\cW_+$:
\begin{equation}\label{eq: RPP1}
I_{R_P}(\aalpha):= \mathop{\inf}_{\begin{array}{c}
\widehat \ppi\in \cW_+\\
\AA \widehat\ppi= \begin{pmatrix} \mmu(\aalpha) \\ \nnu(\aalpha)\end{pmatrix}
\end{array}
}
\left\langle \widehat\ppi , \cc \right\rangle .
\end{equation}
If we use the notations introduced previously \eqref{eq: reduced solutions matrix}, this problem can be reformulated in the following form :
\begin{equation}\label{eq: RPP}\tag{$R_PP_{\aalpha}$}
I_{R_P}(\aalpha):= \mathop{\inf}_{\begin{array}{c}
\pp \in \RR_+^R\\
\AA \widehat{\WW} \pp = \begin{pmatrix} \mmu(\aalpha) \\ \nnu(\aalpha)\end{pmatrix}
\end{array}
}
\left\langle \pp, \widehat{\WW}^T\cc \right\rangle .
\end{equation}
The dual of this semi-reduced-order problem writes
\begin{equation}\label{eq: RPD}\tag{$R_PD_{\aalpha}$}
{J_{R_P}(\aalpha) = \mathop{\sup}_{ \begin{array}{c}
(\pphi,\ppsi) \in \RR^{\cN_x + \cN_y} \\
\widehat\WW^T \AA^T \begin{pmatrix} \pphi \\ \ppsi \end{pmatrix} \leq \widehat\WW^T \cc \\
\end{array}
} 
\left\langle \pphi, \mmu(\aalpha) \right\rangle + \left\langle \ppsi, \nnu(\aalpha) \right\rangle .}
\end{equation}
We now add to this semi-reduced-order dual problem the constraint that the Kantorovitch potentials should belong to the subspaces $\cU$ and $\cV$. We obtain the following reduced dual:
\begin{equation}\label{eq: RD1}
\boxed{\begin{array}{lll}
     J_{R}(\aalpha) &:=&\displaystyle \mathop{\sup}_{ \begin{array}{c}
(\widehat{\pphi}, \widehat{\ppsi}) \in \cU \times \cV \\
\widehat\WW^T \AA^T \begin{pmatrix} \widehat{\pphi} \\ \widehat{\ppsi} \end{pmatrix} \leq \widehat\WW^T \cc \\
\end{array}
} 
\left\langle \widehat{\pphi} , \mmu(\aalpha) \right\rangle + \left\langle \widehat{\ppsi}, \nnu(\aalpha) \right\rangle
 \\
& =&  \displaystyle\mathop{\sup}_{ \begin{array}{c}
(\aa,\bb) \in \RR^{N+M} \\
\widehat\WW^T \AA^T  \widehat\sS \begin{pmatrix} \aa\\ \bb \end{pmatrix} \leq \widehat\WW^T \cc \\
\end{array}
} 
\left\langle \aa, \widehat\UU^T\mmu(\aalpha) \right\rangle + \left\langle \bb, \widehat\VV^T \nnu(\aalpha) \right\rangle,
\end{array}}
\end{equation}
where
\begin{equation}\label{eq: S}
    \widehat{\sS} := \begin{pmatrix}
   \widehat{\UU} & \mathbb{0} \\ \mathbb{0}  & \widehat{\VV}
\end{pmatrix}.
\end{equation}
The primal reduced problem then writes 
\begin{equation}\label{eq: RP proof}
\boxed{\begin{array}{lll}
I_R(\aalpha) &:=& \displaystyle \mathop{\inf}_{\begin{array}{c}
\widehat \ppi\in \mathcal{W}_+ \\ 
\widehat{\sS}^T \AA \widehat\ppi= \widehat{\sS}^T\begin{pmatrix} {\mmu}(\aalpha) \\ \nnu(\aalpha)\end{pmatrix}
\end{array}
} \left\langle \widehat\ppi, {\cc} \right\rangle, \\
&=& \displaystyle \mathop{\inf}_{\begin{array}{c}
 \pp \in \RR_+^R \\ 
\widehat\sS^T \AA \widehat\WW \pp = \widehat \sS^T \begin{pmatrix} \mmu(\aalpha) \\ \nnu(\aalpha)\end{pmatrix}
\end{array}
} \left\langle  \pp, \widehat\WW^T \cc \right\rangle.
\end{array}}
\end{equation}
We can re-write this problem in the standard form of linear programming : let $\widehat{\cc}:= \widehat{\textbf{W}}^T \cc $, $\widehat{\mmu}(\aalpha) := \widehat{\UU}^T\mmu(\aalpha)$ and $\widehat{\nnu}(\aalpha) := \widehat{\VV}^T\nnu(\aalpha)$, and let $\widehat{\AA} := \widehat{\sS}^T \AA  \widehat{\textbf{W}}$.

The reduced problem then  writes

\begin{equation}\label{eq: RP}\tag{$RP_{\aalpha}$}
\boxed{I_R(\aalpha):= \mathop{\inf}_{\begin{array}{c}
\pp \in \RR_+^R \\ 
\widehat{\AA} \pp = \begin{pmatrix} \widehat{\mmu}(\aalpha) \\ \widehat\nnu(\aalpha)\end{pmatrix}
\end{array}
} \left\langle \pp, \widehat{\cc} \right\rangle,} 
\end{equation}

and its dual

\begin{equation}\label{eq: RD}\tag{$RD_{\aalpha}$}
\boxed{J_R(\aalpha):= \mathop{\inf}_{\begin{array}{l}
\aa \in \RR^N, \bb \in \RR^M \\ 
\widehat{\AA}^T \begin{pmatrix} \aa \\ \bb\end{pmatrix} \leq \widehat\cc
\end{array}
} \left\langle \aa, \widehat\mmu(\aalpha) \right\rangle + \left\langle \bb, \widehat\nnu(\aalpha) \right\rangle,} 
\end{equation}
For convenience in the proofs of the properties that follow, we will sometimes refer to the form \eqref{eq: RP proof} of the reduced-order model.

The dimensions of the high-fidelity model \eqref{eq: primal} and the reduced-order model are compared in table \ref{tab: dimensions}.
\begin{table}[htbp]
\centering
\begin{tabular}{l c c}
\toprule
Problem & Degrees of Freedom & Constraints \\
\midrule
High-Fidelity & $\cN_x \cN_y$ & $\cN_x + \cN_y$ \\
Redced-order Model & $R$ & $N+M$ \\
\bottomrule
\end{tabular}
\caption{Dimensions comparison.}\label{tab: dimensions}
\end{table}
\paragraph{Reduction on dual-then-primal}
Here we reduce the problem by first reducing the dimensions of the variables of the dual high-fidelity problem \eqref{eq: dual}, and then obtaining the reduced-order model by reducing again on the dimensions of the space variables of the primal problem. So we begin by adding to the high-fidelity dual problem the constraint that the Kantorovitch potential should belong to the subspaces $\cU$ and $\cV$:

\begin{equation}\label{eq: RDD1}
  J_{R_D}(\aalpha) = \mathop{\sup}_{ \begin{array}{c}
(\widehat\pphi,\widehat\ppsi) \in \cU\times \cV \\
\AA^T \begin{pmatrix} \widehat\pphi \\ \widehat\ppsi \end{pmatrix} \leq  \cc \\
\end{array}
} 
\left\langle \widehat\pphi, \mmu(\aalpha) \right\rangle + \left\langle \widehat \ppsi, \nnu(\aalpha) \right\rangle .
\end{equation}
Using the notations introduced in \ref{eq: reduced solutions matrix} and in \ref{eq: S}, we rewrite this problem with reduced dimensions
\begin{equation}\label{eq: RDD}\tag{$R_DD_{\aalpha}$}
J_{R_D}(\aalpha) = \mathop{\sup}_{ \begin{array}{c}
(\aa,\bb) \in \RR^{N + M} \\
 \AA^T \widehat \sS \begin{pmatrix} \aa \\ \bb \end{pmatrix} \leq  \cc \\
\end{array}
} 
\left\langle \aa, \UU^T \mmu(\aalpha) \right\rangle + \left\langle \bb, \VV^T \nnu(\aalpha) \right\rangle .
\end{equation}
The primal version of this linear program writes
\begin{equation}\label{eq: RDP}\tag{$R_DP_{\aalpha}$}
I_{R_D}(\aalpha):= \mathop{\inf}_{\begin{array}{c}
\ppi\in \RR_+^{\cN_x\cN_y} \\ 
\widehat\sS^T \AA \ppi= \widehat \sS^T \begin{pmatrix} \mmu(\aalpha) \\ \nnu(\aalpha)\end{pmatrix}
\end{array}
} \left\langle \ppi,  \cc \right\rangle, 
\end{equation}
We now reduce the dimension of the cone of variables :
\begin{equation}\label{eq: RP2
}
I_{R}(\aalpha):= \mathop{\inf}_{\begin{array}{c}
\widehat \ppi\in \cW+ \\ 
\widehat\sS^T \AA \widehat\ppi= \widehat \sS^T \begin{pmatrix} \mmu(\aalpha) \\ \nnu(\aalpha)\end{pmatrix}
\end{array}
} \left\langle \widehat\ppi,  \cc \right\rangle, 
\end{equation}
and we recognize the reduced-order model \eqref{eq: RP proof}.

\subsection{Existence of solution for the reduced-order model}\label{subsection: non emptyness}
The high-fidelity problem \eqref{eq: primal} reads as a minimization problem of a continuous function in an non empty compact set, as the vector $\mmu(\aalpha)\otimes\nnu(\aalpha)$ is admissible and the feasible set is a closed subset of the probability vectors of dimension $\cN_x\cN_y$. Therefore, it has solutions.
In the reduced-order model, the function to minimize is continuous, but the conditions on the feasible set are a priori not satisfied anymore. We propose in what follows a condition on the cone $\cW_+$ to ensure that the feasible set is not empty, and a condition  on the matrices $\widehat\UU$ and $\widehat\VV$ to guarantee compactness. 

\paragraph{Non emptiness of the feasible set}\label{subsubsection : non emptyness of the feasible set}
We first exhibit that  $\cA$ is a convex polytope, thus can be generated by finitely many extreme points, and so does the family of marginals $\left\lbrace \begin{pmatrix} \mmu(\aalpha) \\ \nnu(\aalpha) \end{pmatrix} \right\rbrace_{\aalpha \in \cA}$. We then deduce that the reduced-order model \eqref{eq: RP} is feasible for any $\aalpha \in \cA$ if the reduced cone $\cW_+$ contains a transport plan solution of the discrete high-fidelity problem \eqref{eq: primal} at each  extreme point of $\cA$. 
\begin{prop}
The set $\cA$ is a finite intersection of half-spaces, and it is bounded. In other words, it is a convex polytope.
\end{prop}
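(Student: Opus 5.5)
The plan is to read the half-space description directly off the definition of $\cA$ and then get boundedness from nonnegativity together with the normalization constraints. By definition, $\cA$ is the set of $\aalpha = (\aalpha^x,\aalpha^y) \in \RR^{K_x+K_y}$ satisfying three groups of conditions: $\aalpha^x_k \geq 0$ for $1\leq k\leq K_x$ and $\aalpha^y_l\geq 0$ for $1\leq l\leq K_y$; $\sum_k \aalpha^x_k = 1$; and $\sum_l \aalpha^y_l = 1$. Each nonnegativity condition is a closed half-space $\{\aalpha : -\mathbf{e}_i^T\aalpha \leq 0\}$, where $\mathbf{e}_i$ is the $i$-th canonical basis vector of $\RR^{K_x+K_y}$. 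Each equality constraint is the intersection of two half-spaces. For instance, $\sum_k \aalpha^x_k = 1$ is equivalent to
\[
\Big(\sum_k \mathbf{e}_k\Big)^T\aalpha \leq 1 \quad\text{and}\quad -\Big(\sum_k \mathbf{e}_k\Big)^T\aalpha \leq -1 ,
\]
and the $y$-constraint is handled the same way. Hence $\cA$ is the intersection of exactly $K_x+K_y+4$ closed half-spaces. In particular it is a convex, closed polyhedron.

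For boundedness, take $\aalpha\in\cA$ and any index $k$. Since all $\aalpha^x_{k'}$ are nonnegative and sum to $1$, we get $0\leq \aalpha^x_k \leq \sum_{k'}\aalpha^x_{k'} = 1$, and similarly $0\leq\aalpha^y_l\leq 1$. Thus $\cA\subset[0,1]^{K_x+K_y}$, so $\|\aalpha\|_\infty\leq 1$ on $\cA$. A bounded intersection of finitely many half-spaces is by definition a convex polytope.

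Finally, I would invoke the Minkowski--Weyl (Krein--Milman in finite dimension) theorem to record the consequence used afterwards: $\cA$ is the convex hull of its finitely many extreme points. In fact one can identify $\cA$ as the product of the two standard simplices $\Delta_{K_x-1}\times\Delta_{K_y-1}$. Its extreme points are then the pairs $(\mathbf{e}_k,\mathbf{e}_l)$, so $\cA_{\text{ext}}$ has $K_xK_y$ elements. There is no real obstacle here. The only point requiring care is writing each equality constraint as two inequalities, so that the description genuinely consists of half-spaces.
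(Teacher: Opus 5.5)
Your argument is correct and complete. The paper gives no proof of this proposition: it treats it as immediate from the definition and defers the extreme-point consequence, stated in the following corollary, to Karloff. Your half-space description with $K_x+K_y+4$ inequalities, together with the bound $\cA\subset[0,1]^{K_x+K_y}$, is exactly the routine verification being taken for granted.

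There is one small imprecision, and it is only in your optional last paragraph. Finite-dimensional Krein--Milman by itself gives only $\cA=\mathrm{conv}(\cA_{\text{ext}})$. The finiteness of $\cA_{\text{ext}}$ comes from the polyhedral description: each vertex is the unique point where $K_x+K_y$ linearly independent defining constraints are active, and there are only finitely many such choices. That finiteness is the part Minkowski--Weyl supplies.
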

\begin{cor}\label{cor: finite number of extreme points}
  $\cA$ is generated by the set of its extreme points, denoted $\cA_{\text{ext}}$, and this set is finite.
\end{cor}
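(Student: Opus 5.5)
The corollary should follow from the preceding proposition (that $\cA$ is a bounded, finite intersection of half-spaces) together with the standard Minkowski--Krein--Milman theorem in finite dimension. My plan is to make this explicit and, in the product case, to identify the extreme points concretely.

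First I would note that $\cA = \Delta_{K_x} \times \Delta_{K_y}$, where $\Delta_K := \{ \bb \in \RR_+^K : \sum_k \bb_k = 1\}$ is the standard simplex. By the previous proposition, $\cA$ is a compact convex polytope in $\RR^{K_x+K_y}$. Minkowski's theorem, the finite-dimensional Krein--Milman theorem, then says that any compact convex set is the convex hull of its extreme points. This gives the first claim, $\cA = \mathrm{conv}(\cA_{\text{ext}})$.

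For finiteness, I would use the standard characterization of extreme points of a polyhedron $\{ \xx : \mathbf{G}\xx \leq \mathbf{h}\}$ in $\RR^d$. A point is extreme if and only if it satisfies with equality $d$ linearly independent constraints among the defining ones. Each such choice of active constraints determines at most one point. Since there are finitely many constraints, there are finitely many extreme points, at most $\binom{\#\text{constraints}}{d}$.

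A cleaner route, which I would actually present, computes $\cA_{\text{ext}}$ directly. The extreme points of a product of convex sets are exactly the products of extreme points. The extreme points of $\Delta_K$ are the canonical vectors $\ee_1, \dots, \ee_K$: any point with two positive coordinates is a nontrivial midpoint obtained by shifting mass between those coordinates. Hence
\[
\cA_{\text{ext}} = \{ (\ee_k, \ee_l) : 1 \leq k \leq K_x,\ 1 \leq l \leq K_y \},
\]
which has $K_x K_y$ elements. The only mildly delicate step is the product-of-extreme-points fact, which is routine. If $(\aalpha^x,\aalpha^y) = \tfrac12(\aalpha_1+\aalpha_2)$ with $\aalpha_1,\aalpha_2 \in \cA$ and both components extreme, then each component forces the corresponding components of $\aalpha_1,\aalpha_2$ to coincide, so $\aalpha_1 = \aalpha_2$. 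Conversely, if one component is not extreme, splitting that component while keeping the other fixed shows the pair is not extreme. This yields finiteness explicitly.
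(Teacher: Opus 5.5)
Your proof is correct. For the corollary itself, your first two paragraphs are exactly what the paper relies on. The paper gives no argument of its own and simply appeals to the classical theory of polytopes (Minkowski's theorem and the vertex characterization), citing Chapter~1 of \cite{karloff2008linear}. Your ``cleaner route'' corresponds to the paper's later Lemma~\ref{lemma: extreme points}, but the argument there is different. The paper proves generation constructively, checking that $\aalpha=\sum_{k,l}\aalpha^x_k\aalpha^y_l\,(\ee_k,\ff_l)$ with the product weights $\aalpha^x_k\aalpha^y_l$, so it needs no Krein--Milman at all. It then shows extremality by proving that any convex combination of the points $(\ee_{k'},\ff_{l'})$ equal to $(\ee_k,\ff_l)$ must be trivial. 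You instead take generation from Minkowski and identify $\cA_{\text{ext}}$ structurally, through the product-of-extreme-points lemma and the mass-shifting argument in the simplex. Your version characterizes the extreme points more transparently, since it shows directly that no other point of $\cA$ is extreme, and it carries over verbatim to any product of polytopes. The paper's version is fully explicit: it yields a concrete decomposition, and hence a concrete feasible point $\sum_{k,l}\aalpha^x_k\aalpha^y_l\,\ppi_*(\ee_k,\ff_l)$ for the reduced problem in Proposition~\ref{prop: feasible set not empty}, whereas Minkowski only guarantees that some decomposition exists. One small omission: to get exactly $K_xK_y$ extreme points you also need each $\ee_k$ to be extreme in $\Delta_K$. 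This is immediate from nonnegativity: if $\ee_k=\tfrac12(\mathbf{x}+\mathbf{y})$ with $\mathbf{x},\mathbf{y}\in\Delta_K$, all coordinates other than the $k$-th vanish for both. For the corollary itself, the inclusion $\cA_{\text{ext}}\subseteq\{(\ee_k,\ff_l)\}$, which you did prove, already suffices for finiteness.
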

This corollary is a classical property of the analysis of convex polytopes. For a proof we refer for instance the reader to Chapter 1 of \cite{karloff2008linear}. We now show that the set $\left\lbrace \begin{pmatrix} \mmu(\aalpha) \\ \nnu(\aalpha) \end{pmatrix} \right\rbrace_{\aalpha \in \cA}$ inherits the same structure as $\cA$.
\begin{lem}\label{lemma: extreme marges}
  Let $\aalpha \in \cA$ and its convex decomposition with extreme points be $\displaystyle \aalpha = \sum_{\aalpha_{\text{ext}} \in \cA_{\text{ext}}} \lambda_{\text{ext}} \aalpha_{\text{ext}}$. Then
  \begin{equation}
      \begin{pmatrix}
          \mmu(\aalpha) \\ \nnu(\aalpha) 
      \end{pmatrix} = 
      \sum_{\aalpha_{\text{ext}} \in \cA_{\text{ext}}} \lambda_{\text{ext}} \begin{pmatrix}
          \mmu(\aalpha_{\text{ext}}) \\ \nnu(\aalpha_{\text{ext}}) 
      \end{pmatrix} 
  \end{equation}
\end{lem}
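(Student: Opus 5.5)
The statement is a direct consequence of the fact that the maps $\aalpha \mapsto \mmu(\aalpha)$ and $\aalpha \mapsto \nnu(\aalpha)$ are linear in $\aalpha$. Combined with the fact that the weights $\lambda_{\text{ext}}$ of a convex decomposition sum to one, this lets us pass the convex combination through the parametrization. I would therefore first record linearity and then substitute the decomposition provided by Corollary \ref{cor: finite number of extreme points}.

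\emph{Step 1 (linearity).} For any $\aalpha = (\aalpha^x, \aalpha^y)$, the definition gives $\mmu(\aalpha) = \sum_{k=1}^{K_x} \aalpha^x_k \mmu_k$. This expression extends to all of $\RR^{K_x+K_y}$, and there it is a linear map of $\aalpha$: it only reads the $\aalpha^x$ block and multiplies by the fixed vectors $\mmu_k$. The same holds for $\nnu$, which reads only the $\aalpha^y$ block. Hence the stacked map $\aalpha \mapsto \begin{pmatrix} \mmu(\aalpha) \\ \nnu(\aalpha)\end{pmatrix}$ is linear.

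\emph{Step 2 (substitution).} Write $\aalpha = \sum_{\text{ext}} \lambda_{\text{ext}} \aalpha_{\text{ext}}$. Componentwise this means $\aalpha^x_k = \sum_{\text{ext}} \lambda_{\text{ext}} (\aalpha_{\text{ext}})^x_k$ and $\aalpha^y_l = \sum_{\text{ext}} \lambda_{\text{ext}} (\aalpha_{\text{ext}})^y_l$. Substituting into $\mmu(\aalpha)$ and exchanging the two finite sums (over $k$ and over extreme points) gives
\[
\mmu(\aalpha) = \sum_{\text{ext}} \lambda_{\text{ext}} \sum_{k=1}^{K_x} (\aalpha_{\text{ext}})^x_k \mmu_k = \sum_{\text{ext}} \lambda_{\text{ext}} \mmu(\aalpha_{\text{ext}}).
\]
The computation for $\nnu$ is identical. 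Since each $\aalpha_{\text{ext}}$ belongs to $\cA$, every $\mmu(\aalpha_{\text{ext}})$ and $\nnu(\aalpha_{\text{ext}})$ is a well-defined probability vector, so each term on the right-hand side is legitimate.

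There is no real obstacle here; the argument uses only linearity, not even the constraint $\sum \lambda_{\text{ext}} = 1$. The one point worth a sentence is that the decomposition is a finite sum, which Corollary \ref{cor: finite number of extreme points} guarantees, so exchanging the sums is immediate.
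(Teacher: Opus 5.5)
Your proof is correct and follows the same route as the paper: the paper writes $\begin{pmatrix}\mmu(\aalpha)\\ \nnu(\aalpha)\end{pmatrix} = \textbf{M}\aalpha$ for a fixed block matrix $\textbf{M}$ whose columns are the $\mmu_k$ and $\nnu_l$, then distributes $\textbf{M}$ over the decomposition, which is your componentwise substitution in matrix form. You are right that only linearity is used and not $\sum \lambda_{\text{ext}} = 1$, so your opening mention of the weights summing to one can be dropped.
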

\begin{proof}
Let us remark that the set of parametrized marginals $\left\lbrace \begin{pmatrix} \mmu(\aalpha) \\ \nnu(\aalpha) \end{pmatrix} \right\rbrace_{\aalpha \in \cA}$ can be seen as the image of $\cA$ by the linear application 
$\textbf{M} := \begin{pmatrix}
    \mmu_1 & \dots & \mmu_{K_x} & \00 & \dots & \00 \\
    \00 & \dots & \00 & \nnu_1 & \dots & \nnu_{K_y}
\end{pmatrix}$ .
Now, let $\aalpha \in \cA$. With the previous notations, 
\begin{equation}
    \begin{pmatrix}
        \mmu(\aalpha) \\ \nnu(\aalpha) 
    \end{pmatrix} = \textbf{M} \aalpha.
\end{equation}
But since $\displaystyle \aalpha = \sum_{\aalpha_{\text{ext}} \in \cA_{\text{ext}}} \lambda_{\text{ext}} \aalpha_{\text{ext}}$, we also have
\begin{equation}
    \begin{pmatrix}
        \mmu(\aalpha) \\ \nnu(\aalpha) 
    \end{pmatrix}
    = \sum_{\aalpha_{\text{ext}} \in \cA_{\text{ext}}} \lambda_{\text{ext}} \textbf{M} \aalpha_{\text{ext}} 
     = \sum_{\aalpha_{\text{ext}} \in \cA_{\text{ext}}} \lambda_{\text{ext}} \begin{pmatrix}
          \mmu(\aalpha_{\text{ext}}) \\ \nnu(\aalpha_{\text{ext}}) 
      \end{pmatrix} .
\end{equation}
\end{proof}
We are now able to give a feasibility condition.
\begin{prop}\label{prop: feasible set not empty}
    Suppose that for any extreme point of $\cA$ $\aalpha_{\text{ext}}$, the solution $\ppi(\aalpha_{\text{ext}})$ of the high-fidelity discrete problem \ref{eq: primal} belongs to $\cW_+$. Then, for any $\aalpha \in \cA$, the problem \ref{eq: RP} is feasible.
\end{prop}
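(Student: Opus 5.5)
The plan is to build an explicit feasible point for \eqref{eq: RP} as a convex combination of the high-fidelity plans at the extreme points. I will work with the form \eqref{eq: RP proof}, where the unknown is $\widehat\ppi \in \cW_+$ subject to $\widehat\sS^T \AA \widehat\ppi = \widehat\sS^T \begin{pmatrix} \mmu(\aalpha) \\ \nnu(\aalpha)\end{pmatrix}$. It therefore suffices to find $\widehat\ppi\in\cW_+$ satisfying the stronger, unreduced constraint $\AA\widehat\ppi = \begin{pmatrix} \mmu(\aalpha) \\ \nnu(\aalpha)\end{pmatrix}$; left-multiplying by $\widehat\sS^T$ then gives the reduced constraint.

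Fix $\aalpha\in\cA$. By Corollary \ref{cor: finite number of extreme points}, I write $\aalpha = \sum_{\aalpha_{\text{ext}}\in\cA_{\text{ext}}} \lambda_{\text{ext}}\aalpha_{\text{ext}}$ with $\lambda_{\text{ext}}\geq 0$ and $\sum \lambda_{\text{ext}}=1$. I then set
\[
\widehat\ppi := \sum_{\aalpha_{\text{ext}}\in\cA_{\text{ext}}} \lambda_{\text{ext}}\, \ppi(\aalpha_{\text{ext}}).
\]
By hypothesis each $\ppi(\aalpha_{\text{ext}})$ lies in $\cW_+$. Since $\cW_+$ is a cone generated by nonnegative combinations, it is closed under nonnegative combinations, so $\widehat\ppi\in\cW_+$. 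Concretely, if $\ppi(\aalpha_{\text{ext}}) = \widehat\WW \pp^{\text{ext}}$ with $\pp^{\text{ext}}\in\RR^R_+$, then $\pp := \sum \lambda_{\text{ext}} \pp^{\text{ext}} \in \RR_+^R$ and $\widehat\ppi = \widehat\WW\pp$. In the standard construction of $\cW_+$ these plans are themselves columns of $\widehat\WW$, so $\pp^{\text{ext}}$ is simply a canonical basis vector.

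Next I check the marginals. By linearity of $\AA$ and the feasibility of each $\ppi(\aalpha_{\text{ext}})$ for $(P_{\aalpha_{\text{ext}}})$,
\[
\AA\widehat\ppi = \sum \lambda_{\text{ext}} \begin{pmatrix} \mmu(\aalpha_{\text{ext}}) \\ \nnu(\aalpha_{\text{ext}})\end{pmatrix}.
\]
By Lemma \ref{lemma: extreme marges}, this sum equals $\begin{pmatrix} \mmu(\aalpha) \\ \nnu(\aalpha)\end{pmatrix}$. Applying $\widehat\sS^T$ gives $\widehat\AA\pp = \begin{pmatrix} \widehat\mmu(\aalpha) \\ \widehat\nnu(\aalpha)\end{pmatrix}$ with $\pp\in\RR^R_+$, so \eqref{eq: RP} is feasible.

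No step is a real obstacle. The only care needed is the passage from $\widehat\ppi\in\cW_+$ to a coefficient vector $\pp\in\RR^R_+$, which requires $\cW_+$ to be exactly the set $\widehat\WW\RR^R_+$. This holds by definition of the cone, including after a nonnegative factorization, since the extreme-point plans are kept as columns of $\widehat\WW$.
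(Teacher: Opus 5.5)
Your proof is correct and follows essentially the same route as the paper. The paper also sets $\ppi = \sum \lambda_{\text{ext}} \ppi(\aalpha_{\text{ext}})$, observes that it lies in $\cW_+$ as a nonnegative combination, and uses linearity of $\AA$ together with Lemma~\ref{lemma: extreme marges} to match the full marginals before passing to the reduced constraint. Your explicit conversion of $\widehat\ppi \in \cW_+$ into a coefficient vector $\pp \in \RR^R_+$, and the final multiplication by $\widehat{\mathbf{S}}^T$, just spell out steps the paper leaves implicit.
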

\begin{proof}
    Let $\aalpha \in \cA$ be a given parameter, and let $ \displaystyle \aalpha = \sum_{\aalpha_{\text{ext}} \in \cA_{\text{ext}}} \lambda_{\text{ext}} \aalpha_{\text{ext}}$ be its convex decomposition over the extreme points of $\cA$. Let $\displaystyle  \ppi= \sum_{\aalpha_{\text{ext}} \in \cA_{\text{ext}}} \lambda_{\text{ext}} \ppi(\aalpha_{\text{ext}})$. Our goal is to prove that $\ppi$ is feasible, that is $\ppi\in \cW_+$, and. $\widehat\sS^T \AA \ppi= \widehat \sS^T\begin{pmatrix} \mmu(\aalpha) \\ \nnu(\aalpha) \end{pmatrix}$ (we refer to the feasibility conditions baed on the formulation \eqref{eq: RP proof} of the reduced order model). Let us first notice that by hypothesis, $\ppi$ is a convex (thus positive) combination of points of $\cW_+$, and thus $\ppi\in \cW_+$. It remains to prove that $\ppi$ satisfies the constraints. One one hand, we can rewrite the left hand side of the constraint equality : 
    \begin{equation}
        \AA \ppi= \sum_{\aalpha_{\text{ext}} \in \cA_{\text{ext}}} \lambda_{\text{ext}} \AA \ppi(\aalpha_{\text{ext}}) = \sum_{\aalpha_{\text{ext}} \in \cA_{\text{ext}}} \lambda_{\text{ext}} \begin{pmatrix}\mmu(\aalpha_{\text{ext}}) \\ \nnu(\aalpha_{\text{ext}}) \end{pmatrix},
    \end{equation}
    and on the other hand, by lemma \ref{lemma: extreme points}, we rewrite the right-hand side of the constraint equality
    \begin{equation}
         \begin{pmatrix}\mmu(\aalpha) \\ \nnu(\aalpha)\end{pmatrix} = \sum_{\aalpha_{\text{ext}} \in \cA_{\text{ext}}} \lambda_{\text{ext}} \begin{pmatrix}\mmu(\aalpha_{\text{ext}}) \\ \nnu(\aalpha_{\text{ext}}) \end{pmatrix} ,
    \end{equation}
    thus $\ppi$ is admissible and the feasible set of \eqref{eq: RP proof} is not empty.
\end{proof}
\begin{rmk}
    Notice  that if the conditions of Proposition \ref{prop: feasible set not empty} holds, then the semi-reduced problem \eqref{eq: RPP} is feasible too.
\end{rmk}
\begin{rmk} \label{rmk : non emptyness = convex combination of columns.}
    Since the matrix $\widehat \WW$ is obtained by stacking columnwise the transport plans $\{ \ppi_*(\aalpha)\}_{\aalpha \in \cA_\text{train}^P}$, $\AA \widehat \WW $ is obtained by stacking columnwise the corresponding constraints $\left\lbrace \begin{pmatrix} \mmu(\aalpha) \\ \nnu(\aalpha)\end{pmatrix}\right\rbrace_{\aalpha \in \cA_\text{train}^P}$. Saying that the feasible set of the semi-reduced problem is not empty at $\aalpha \in \cA$ is equivalent to the fact that there exists a positive combination of columns of $\AA \widehat \WW$ 
    that is equal to $\begin{pmatrix} \mmu(\aalpha) \\ \nnu(\aalpha)\end{pmatrix}$.

\end{rmk}
We finish by giving an explicit expression of the extreme points of $\cA$, since it can be convenient for numerical applications.
\begin{lem}\label{lemma: extreme points}
    The set $\displaystyle  \cA := \left\lbrace \aalpha := ({\aalpha^x, \aalpha^y})\in \mathbb{R}_+^{K_x + K_y}, \sum_{k=1}^{K_x} \mathbf{\aalpha}^x_k = 1, \sum_{l=1}^{K_y} \mathbf{\aalpha}^y_l = 1 \right\rbrace$  has  $K_x\times K_y$ extreme points, defined by $\cA_{\text{ext}} = \left\lbrace (\ee_{k}, \ff_{l})\right\rbrace_{\footnotesize \begin{matrix} 1 \leq k \leq K_x \\ 1\leq l \leq K_y \end{matrix}}$, where $\left\lbrace \ee_k \right\rbrace_{1 \leq k \leq K_x}$ and $\left\lbrace \ff_k \right\rbrace_{1 \leq l \leq K_y}$ are the elementary basis vectors of $\RR^{K_x}$ and $\RR^{K_y}$.
\end{lem}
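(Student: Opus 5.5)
The plan is to use the product structure of $\cA$. Write $\Delta_K := \{ \ww \in \RR_+^K : \sum_{k} \ww_k = 1\}$ for the standard simplex, so that $\cA = \Delta_{K_x} \times \Delta_{K_y}$. The argument splits into three steps: identify the extreme points of each simplex, show that extreme points of a product are exactly products of extreme points, and count.

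\textbf{Step 1: extreme points of a simplex.} I would show that the extreme points of $\Delta_K$ are exactly $\ee_1,\dots,\ee_K$.

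First, each $\ee_k$ is extreme. Suppose $\ee_k = t\,\ww + (1-t)\,\ww'$ with $\ww,\ww' \in \Delta_K$ and $t\in(0,1)$. For $j \neq k$ we have $0 = t\,\ww_j + (1-t)\,\ww'_j$ with both terms non-negative, so $\ww_j = \ww'_j = 0$. The sum constraint then forces $\ww_k = \ww'_k = 1$, hence $\ww = \ww' = \ee_k$.

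Conversely, any $\ww \in \Delta_K$ equals $\sum_k \ww_k \ee_k$, which is a convex combination. If $\ww$ has at least two positive coordinates, choose one with $0<\ww_k<1$ and write
\[
\ww = \ww_k\, \ee_k + (1-\ww_k)\,\frac{\ww - \ww_k \ee_k}{1-\ww_k}.
\]
The second point lies in $\Delta_K$ and differs from $\ee_k$, so $\ww$ is not extreme.

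\textbf{Step 2: extreme points of a product.} I would prove that for convex sets $C_1, C_2$, a point $(a,b)$ is extreme in $C_1\times C_2$ if and only if $a$ is extreme in $C_1$ and $b$ is extreme in $C_2$.

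For the first direction, suppose $a = t\,a_1 + (1-t)\,a_2$ is a nontrivial decomposition in $C_1$. Then $(a,b) = t\,(a_1,b) + (1-t)\,(a_2,b)$ is a nontrivial decomposition in the product, so $(a,b)$ is not extreme. The same argument applies to $b$.

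For the converse, suppose $(a,b) = t\,(a_1,b_1) + (1-t)\,(a_2,b_2)$ with $t \in (0,1)$. Projecting onto each factor gives $a = t\,a_1 + (1-t)\,a_2$ and $b = t\,b_1 + (1-t)\,b_2$. Extremality of $a$ and $b$ in their factors forces $a_1 = a_2 = a$ and $b_1 = b_2 = b$.

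\textbf{Step 3: counting.} Combining Steps 1 and 2 gives $\cA_{\text{ext}} = \{(\ee_k,\ff_l)\}_{k,l}$. These points are pairwise distinct, so there are exactly $K_x K_y$ of them.

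None of these steps is a real obstacle. The only point requiring care is Step 2, where one must verify that the projections of a convex decomposition in the product are again convex decompositions in each factor. This holds because the projections are linear, so the same weight $t$ appears in each factor and the converse argument goes through.
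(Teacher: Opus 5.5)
Your proof is correct, but it argues differently from the paper. The paper never factors $\cA=\Delta_{K_x}\times\Delta_{K_y}$ and has no product lemma. Instead, it first shows that the points $(\ee_k,\ff_l)$ generate $\cA$, by writing the explicit decomposition $\aalpha=\sum_{k,l}\aalpha^x_k\aalpha^y_l\,(\ee_k,\ff_l)$. It then uses the linear independence of the $\ee_{k'}$ (resp.\ the $\ff_{l'}$) and the non-negativity of the weights to show that the only convex combination of the points $(\ee_{k'},\ff_{l'})$ equal to $(\ee_k,\ff_l)$ is the trivial one.

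The paper's route produces the explicit product weights $\aalpha^x_k\aalpha^y_l$, which are exactly the convex coefficients used later in Lemma~\ref{lemma: extreme marges} and Proposition~\ref{prop: feasible set not empty}. It leaves implicit two standard facts about the convex hull of a finite set $V$. First, every extreme point of the hull lies in $V$. Second, a point of $V$ with only the trivial representation is extreme for decompositions inside the whole hull, not just among combinations of $V$.

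Your argument works directly from the definition of an extreme point. It therefore proves both inclusions between $\cA_{\text{ext}}$ and $\{(\ee_k,\ff_l)\}_{k,l}$ without those facts, and your Step 2 holds for arbitrary convex sets. What it does not give by itself is an explicit representation of a general $\aalpha$ over $\cA_{\text{ext}}$. For that you would cite Corollary~\ref{cor: finite number of extreme points}, or simply write down the product weights.
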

\begin{proof}
We begin by showing the generative property of $\cA_{\text{ext}}$. We then prove that the elements of $\cA_{\text{ext}}$ are extreme points by noticing each point $\aalpha$ of $\cA_{\text{ext}}$, can be expressed as one unique convex combination of points of $\cA_{\text{ext}} $.

Let $(\aalpha^x, \aalpha^y) = \left(\left\lbrace \aalpha_{k}^x \right\rbrace_{1 \leq k \leq K_x}, \left\lbrace \aalpha_{l}^y \right\rbrace_{1\leq l \leq K_y} \right)$ be an element of $\cA$. Let us show that $\aalpha$ is a convex combination of  $\left\lbrace \left( \ee_{k}, \ff_{l}\right) \right\rbrace_{\footnotesize \begin{matrix} 1 \leq k \leq K_x \\ 1\leq l \leq K_y \end{matrix}}$ associated with the coefficients $\bbeta = \left\lbrace \aalpha_{k}^x \aalpha_{l}^y\right\rbrace_{\footnotesize \begin{matrix} 1 \leq k \leq K_x \\ 1\leq l \leq K_y \end{matrix}} $:

\begin{equation}
    \sum_{k' = 1}^{K_x} \sum_{l' = 1}^{K_y} \aalpha_{k'}^x\aalpha_{l'}^y (\ee_{k'}, \ff_{l'})  = (\aalpha^x, \aalpha^y).
\end{equation}

We first check that the coefficients of $\bbeta$ form a convex combination. This arises from the fact that the coefficients of $\aalpha^x$ and $\aalpha^y$ are themselves  convex combinations, so for any $1\leq k' \leq K_x$ and $1 \leq l' \leq K_y$, $\bbeta_{k', l'} \geq 0$, and

\begin{equation}
   \sum_{k' = 1}^{K_x} \sum_{l' = 1}^{K_y} \bbeta_{k', l'} = \sum_{k' = 1}^{K_x} \sum_{l' = 1}^{K_y} \aalpha_{k'}^x\aalpha_{l'}^y  = 1 .
\end{equation}

Now, notice that we can decompose $\aalpha^x$ and $\aalpha^y$ as sums of elementary vectors: $\displaystyle  \aalpha^x = \sum_{k = 1}^{K_x} \aalpha^x_{k} \ee_{k}$, and $\displaystyle  \aalpha^y = \sum_{l = 1}^{K_y} \aalpha^y_{l} \ff_{l}$.

Thus, for $1 \leq k \leq K_x$, 
\begin{equation}
    \begin{array}{ll}
        \displaystyle{} \left(\sum_{k' = 1}^{K_x} \sum_{l' = 1}^{K_y} \aalpha_{k'}^x \aalpha_{l'}^y \ee_{k'}\right)_{k} & = \displaystyle{}
          \left(\sum_{k' = 1}^{K_x} \aalpha_{k'}^x \ee_{k'} \right)_{k}\\
         & \displaystyle{} =  (\aalpha_{k}^x \ee_{k})_{k} \\
         &\displaystyle{} = \aalpha^x_{k},
    \end{array}
\end{equation}

and for $1 \leq l \leq K_y$, 
\begin{equation}
    \begin{array}{ll}
       \displaystyle{}   \left(\sum_{k' = 1}^{K_x} \sum_{l' = 1}^{K_y} \aalpha_{k'}^x \aalpha_{l'}^y  \ff_{l'}\right)_{l} & = 
        \displaystyle{}  \left(\sum_{l' = 1}^{K_y} \aalpha_{l'}^y \ff_{l'} \right)_{l}\\
         &\displaystyle{} = ( \aalpha_{l}^y \ff_{l})_{l} \\
         &\displaystyle{} = \aalpha^y_{l}
    \end{array}
\end{equation}

This concludes the fact that $\left\lbrace \left( \ee_{k}, \ff_{l}\right) \right\rbrace_{\footnotesize \begin{matrix} 1 \leq k \leq K_x \\ 1\leq l \leq K_y \end{matrix}}$ is a generative set of $\cA$.

Now, suppose that, for $1\leq k \leq K_x$ ans $1\leq l \leq K_y$, the point $\displaystyle{} (\ee_{k'}, \ff_{l'})$ is expressed as a convex combination of $\left\lbrace \left( \ee_{k}, \ff_{l}\right) \right\rbrace_{\footnotesize \begin{matrix} 1 \leq k \leq K_x \\ 1\leq l \leq K_y \end{matrix}}$ associated to the coefficients $\{ \bbeta_{k', l'}\}$. Then, in particular :
\begin{equation}
    \ee_{k} = \sum_{k' =1}^{K_x} \left( \sum_{l' =1}^{K_y} \bbeta_{k', l'} \right) \ee_{k'},
\end{equation}
By linear independence of the elementary vectors $\ee_{k'}$, if $k' \neq k$,
 \begin{equation}
     \sum_{l' = 1}^{K_y} \bbeta_{k', l'} = 0,
 \end{equation}
and by non-negativity of the coefficients of $\bbeta$, we obtain that $\bbeta_{k', l' = 0}$ if $k' \neq k$.

By a similar computation, since the $\{\ff_{l'}\}_{1 \leq l'\leq K_y}$ are independent, by non-negativity of the coefficients of $\bbeta$, for every $1\leq k' \leq K_x$, $\leq l' \leq K_y$, $\bbeta_{k', l'} = 0$ if $l' \neq l$. 

Since the coefficients of $\bbeta$ sum to one, this means that the only coefficient that can be non zero, namely $\bbeta_{k,l}$, is equal to 1. So we have proven that every  element of $\left\lbrace \left( \ee_{k}, \ff_{l}\right) \right\rbrace_{\footnotesize \begin{matrix} 1 \leq k \leq K_x \\ 1\leq l \leq K_y \end{matrix}}$ can be written as one unique convex combination of elements of this set.
\end{proof}

\paragraph{Compactness of the feasible set} \label{subsection: compactness}
We draw here a simple condition to guaranty the compactness of the reduced-order model \eqref{eq: RP}.

\begin{lem}\label{lemma: compactness}
If  the vector $\11_{\cN_x}$ is one column of $\widehat{\UU}$ or if $\11_{\cN_y}$ is one column of $\widehat{\VV}$, the set of feasible solutions of  \eqref{eq: RP} is compact.    
\end{lem}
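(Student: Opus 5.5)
The feasible set of \eqref{eq: RP} is $F(\aalpha) := \{ \pp \in \RR_+^R : \widehat\AA \pp = (\widehat\mmu(\aalpha), \widehat\nnu(\aalpha))^T \}$. It is an intersection of the closed orthant $\RR_+^R$ with an affine subspace, hence closed. In finite dimension compactness then reduces to boundedness, and I will bound $\|\pp\|_1$ for every $\pp \in F(\aalpha)$ by a single constant. Assume $\11_{\cN_x}$ is a column of $\widehat\UU$; the case $\11_{\cN_y}$ in $\widehat\VV$ is symmetric.

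The key step is to read off the row of the constraint $\widehat\sS^T \AA \widehat\WW \pp = \widehat\sS^T (\mmu(\aalpha), \nnu(\aalpha))^T$ that corresponds to this column. By the block structure of $\widehat\sS$ in \eqref{eq: S}, that row is
\begin{equation*}
\11_{\cN_x}^T \,\big(\AA \widehat\WW \pp\big)_{x} = \11_{\cN_x}^T \mmu(\aalpha) = 1 ,
\end{equation*}
where $(\cdot)_x$ denotes the first $\cN_x$ components, i.e.\ the first marginal. For any $\ppi \in \RR^{\cN_x\cN_y}$ we have $\11_{\cN_x}^T (\AA\ppi)_x = \sum_{i,j} \ppi_{i,j}$, the total mass of $\ppi$. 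The identity above therefore says that $\widehat\ppi = \widehat\WW \pp$ has total mass $1$.

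Next I expand $\widehat\ppi = \sum_r \pp_r \ww_r$. Each column $\ww_r$ is a high-fidelity transport plan $\ppi_*(\aalpha)$ for some $\aalpha \in \cA_{\text{train}}^P$. It is therefore nonnegative with total mass $\11^T \ww_r = 1$, since its marginal $\mmu(\aalpha)$ is a probability vector. Hence
\begin{equation*}
1 = \sum_{r=1}^R \pp_r \,\11^T \ww_r = \sum_{r=1}^R \pp_r = \|\pp\|_1 ,
\end{equation*}
using $\pp \ge 0$. So $F(\aalpha)$ is contained in the probability simplex of $\RR^R$, which is bounded, and $F(\aalpha)$ is compact.

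The only delicate point is the mass normalization of the columns. If $\widehat\WW$ were built by a nonnegative matrix factorization rather than from raw snapshots, the columns would need to be nonzero and nonnegative. One then uses $\pp_r\, \11^T\ww_r \le 1$ with $\min_r \11^T \ww_r > 0$ to obtain $\|\pp\|_\infty \le 1/\min_r \11^T\ww_r$, which still gives boundedness. For the $\11_{\cN_y}$ case the same argument uses the second marginal block and $\11_{\cN_y}^T\nnu(\aalpha)=1$.
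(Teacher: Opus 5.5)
Your proof is correct and follows the paper's argument. The row of the reduced constraint that comes from the column $\11_{\cN_x}$ of $\widehat\UU$ is the total-mass condition $\11_{\cN_x\cN_y}^T\widehat\WW\pp = 1$, and nonnegativity turns this into boundedness of the closed feasible set. The only difference is where the bound is placed. The paper bounds the plan $\widehat\ppi = \widehat\WW\pp$ in the formulation \eqref{eq: RP proof}. You go one step further and use the unit mass of each column of $\widehat\WW$ to get $\|\pp\|_1 = 1$, which is exactly what compactness in the variables of \eqref{eq: RP} requires. Your remark on nonzero nonnegative columns also correctly covers the NMF variant.
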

\begin{proof}
The feasible set of the reduced problem written on the form \eqref{eq: RP proof} is closed as it is the intersection between a closed cone, $\mathcal{W}_+$, and the reciprocal image of a point by a linear application, $\mathrm{Im}^{-1}(\widehat{\sS}^T \AA)\left\lbrace \begin{pmatrix} \widehat{\mmu}(\aalpha)\\ \widehat{\nnu}(\aalpha)\end{pmatrix}\right\rbrace $.

If $\11_{\cN_x}$ is a column of $\widehat{\textbf{U}}$, for instance, the last one, then 
\begin{equation}
 \widehat{\sS}^T_N =    (\widehat{\textbf{U}}^T| \00)_N = (\11_{\cN_x}| \00 ) .
\end{equation}
Recall now that, by definition of $\AA$ in \eqref{eq: A}, for any $\ppi \in \RR^{\cN_x \cN_y}_+$, 
\begin{equation}
    (\AA \ppi)_i = \sum_{j = 1}^{\cN_y} \ppi_{i + j \cN_x}.
\end{equation}
Thus, the condition $\left(\widehat{\sS}^T \AA\right)_N \ppi=\left(\widehat{\sS}^T \AA\right)_N \begin{pmatrix} \widehat{\mmu}(\aalpha) \\ \widehat{\nnu}(\aalpha) \end{pmatrix}$ reads explicitly as the following equality:
\begin{equation}
         \sum_{i = 1}^{\cN_x}(\AA \ppi)_i = \sum_{k = 1}^{\cN_x \times \cN_y} \ppi_k  = \sum_{i = 1}^{\cN_x} \mmu(\aalpha)_i = 1.
\end{equation}
Since $\ppi \in \RR^{\cN_x \cN_y}_+$ , the fact that its coefficients sum to 1  implies that the feasible set is bounded.

If $\11_{\cN_y}$ is one column of $\widehat\VV$, the same proof holds with very similar computations. 
\end{proof}
\begin{rmk}
    Adding the $\11_\bullet$ vector is in fact a way to say we are looking for solutions that are probability vectors.

\end{rmk}
\paragraph{Existence of a solution}
We end this Section by giving an existence result for the reduced-order model.
\begin{prop}\label{prop: existence of a solution}
    Under the conditions of Properties \ref{prop: feasible set not empty} and \ref{lemma: compactness}, the reduced-order primal has a solution.  
\end{prop}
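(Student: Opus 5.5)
The plan is the standard Weierstrass argument: minimize a continuous function over a nonempty compact set, working with the reduced-order model in the form \eqref{eq: RP proof}, where the unknown is $\widehat\ppi \in \cW_+$.

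\emph{Nonemptiness.} Fix $\aalpha \in \cA$. Proposition \ref{prop: feasible set not empty} applies, since the high-fidelity solutions at every point of $\cA_{\text{ext}}$ are columns of $\widehat\WW$. It yields an admissible $\widehat\ppi = \sum_{\text{ext}} \lambda_{\text{ext}} \ppi(\aalpha_{\text{ext}}) \in \cW_+$ satisfying $\widehat\sS^T \AA \widehat\ppi = \widehat\sS^T (\mmu(\aalpha), \nnu(\aalpha))^T$. So the feasible set $F(\aalpha)$ is nonempty.

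\emph{Compactness.} By Lemma \ref{lemma: compactness}, $F(\aalpha)$ is closed and bounded in $\RR^{\cN_x\cN_y}$, hence compact. This uses the hypothesis that $\11_{\cN_x}$ is a column of $\widehat\UU$ or $\11_{\cN_y}$ is a column of $\widehat\VV$. Closedness also holds because $\cW_+$ is finitely generated, and a finitely generated cone is closed.

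\emph{Existence.} The objective $\widehat\ppi \mapsto \langle \widehat\ppi, \cc\rangle$ is linear, hence continuous. It therefore attains its infimum on the nonempty compact set $F(\aalpha)$ at some $\widehat\ppi_*$, and $I_R(\aalpha)$ is a minimum.

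\emph{Transfer to \eqref{eq: RP}.} The statement concerns the reduced primal in the variable $\pp \in \RR^R_+$, so the minimizer must be lifted to a coefficient vector. Any $\pp \in \RR^R_+$ with $\widehat\WW\pp = \widehat\ppi_*$ is a minimizer of \eqref{eq: RP}, because the constraint and the objective depend on $\pp$ only through $\widehat\WW\pp$. Such a $\pp$ exists because $\widehat\ppi_* \in \cW_+ = \widehat\WW\,\RR^R_+$.

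\emph{Main subtlety.} Compactness in $\pp$ itself can fail when $\widehat\WW$ has a nontrivial kernel, since then the set of admissible $\pp$ may be unbounded. Arguing in the $\widehat\ppi$ variable and pulling back, as above, sidesteps this. As a consequence, the optimal value is attained by finite linear-programming theory, even though the set of optimal $\pp$ need not be compact.
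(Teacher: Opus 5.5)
Your argument is correct and is essentially the paper's Weierstrass argument: a nonempty feasible set by Proposition~\ref{prop: feasible set not empty}, compactness by Lemma~\ref{lemma: compactness}, and a continuous objective, plus an added lifting step from $\widehat\ppi$ to $\pp$. Your closing ``subtlety'' is unfounded, though: each column of $\widehat{\WW}$ is a transport plan of mass one, so the constraint coming from $\11_{\cN_x}$ in $\widehat{\UU}$ (or $\11_{\cN_y}$ in $\widehat{\VV}$) reads $\sum_{r=1}^R \pp_r = 1$, which confines the feasible $\pp \in \RR^R_+$ to a compact subset of the simplex (a nonzero kernel vector of $\widehat{\WW}$ is never nonnegative, so it cannot generate unboundedness), and hence the lift is valid but unnecessary and the set of optimal $\pp$ is compact as well.
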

\begin{proof}
    The reduced-order primal is a minimization problem of a continuous function. The conditions of \ref{prop: feasible set not empty} and \ref{lemma: compactness} guaranty that the feasible set is not empty and compact. We deduce that there exists a solution to this problem.
\end{proof}
\begin{rmk}
  By strong duality theory (see e.g. Chapter 3 of \cite{karloff2008linear}), it is sufficient to prove that the reduced-order problem \eqref{eq: RP} has solution to guaranty that its dual version \eqref{eq: RD} has solutions too.  
\end{rmk}

\subsection{An alternative way to choose \texorpdfstring{$\cU$ and $\cV$}{}} \label{subsection : small U and V}
We propose here a way to choose $\cU$ and $\cV$ of dimension $K_x$ and $K_y$, that guarantees that the optimal costs of the reduced-order model \eqref{eq: RP} and of the semi-reduced order model \eqref{eq: RPP} are the same. We begin by giving a sufficient condition to obtain the equality of the optimal value. We then present a way to build $\widehat \UU$ and $\widehat \VV$ satisfying this condition with Gram-Schmidt algorithm.

\subsubsection{Rank condition to make the reduced-order model equivalent to the semi-reduced-order model}
Let us begin by giving a short analysis of the structure of the constraint matrix of the half reduced problem \eqref{eq: RPP} that motivates the search of $\widehat \UU$ and $\widehat \VV$ of small dimension. As defined in \ref{subsubsection : primal reduced space W}, $\widehat{\WW}$ is obtained by stacking columnwise solutions of high-fidelity problems for different  parameters. In other terms, if the corresponding training set is defined as $\cA_{train}^P= \{ \aalpha_r\}_{1\leq r \leq R}$, then:
\begin{equation}
    \widehat{\WW} := \begin{pmatrix} \ppi_*(\aalpha_1)| \cdots | \ppi_*(\aalpha_R) \end{pmatrix}.
\end{equation}
%
One can now easily see that the constraint matrix $\AA \widehat \WW$ can be written as 
\begin{equation}
    \AA\widehat{\WW} =   \begin{pmatrix} \mmu(\aalpha_1)| \cdots | \mmu(\aalpha_R) \\ \nnu(\aalpha_1)|\cdots | \nnu(\aalpha_R) \end{pmatrix} =: \begin{pmatrix} (\AA \widehat \WW)_x \\ (\AA \widehat \WW)_y \end{pmatrix}, 
\end{equation}
where  $(\AA\widehat\WW)_x$ (respectively $(\AA\widehat\WW)_y$) denotes the upper part (resp. lower part) of $\AA\widehat\WW$.
We can now derive  an upper bound of the rank of $(\AA\widehat\WW)_x$  and $(\AA\widehat\WW)_y$ :
\begin{lem} \label{lemma: rank of cstr AW}
   $\mathrm{rank}\big((\AA\widehat\WW)_x\big) \leq K_x$ and $\mathrm{rank}\big((\AA\widehat\WW)_y\big) \leq K_y$.  
\end{lem}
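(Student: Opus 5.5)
The plan is to show that every column of $(\AA\widehat\WW)_x$ lies in the span of the $K_x$ fixed vectors $\mmu_1,\dots,\mmu_{K_x}$, and symmetrically every column of $(\AA\widehat\WW)_y$ lies in the span of $\nnu_1,\dots,\nnu_{K_y}$. Both bounds then follow, because the rank of a matrix equals the dimension of its column space.

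First, I will use the identity $\AA\ppi_*(\aalpha_r) = \begin{pmatrix} \mmu(\aalpha_r) \\ \nnu(\aalpha_r)\end{pmatrix}$. This holds because $\ppi_*(\aalpha_r)$ is feasible for \eqref{eq: primal} at $\aalpha_r$, and it justifies the displayed block form of $\AA\widehat\WW$. The $r$-th column of $(\AA\widehat\WW)_x$ is therefore $\mmu(\aalpha_r)$.

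Second, by the definition of the parametrized marginals,
\[
\mmu(\aalpha_r) = \sum_{k=1}^{K_x} (\aalpha_r^x)_k \, \mmu_k .
\]
This is most cleanly expressed through the linear map $\textbf{M}$ from the proof of Lemma \ref{lemma: extreme marges}. Let $\textbf{M}_x := (\mmu_1|\cdots|\mmu_{K_x}) \in \RR^{\cN_x\times K_x}$, and let $\textbf{B}_x\in\RR^{K_x\times R}$ be the matrix whose $r$-th column is $\aalpha_r^x$. Then
\[
(\AA\widehat\WW)_x = \textbf{M}_x \textbf{B}_x ,
\]
so $\mathrm{rank}\big((\AA\widehat\WW)_x\big) \leq \mathrm{rank}(\textbf{M}_x) \leq K_x$, since $\textbf{M}_x$ has only $K_x$ columns. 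The $y$ part is handled identically, with $\textbf{M}_y := (\nnu_1|\cdots|\nnu_{K_y})$ and the weights $\aalpha_r^y$, which gives $\mathrm{rank}\big((\AA\widehat\WW)_y\big)\leq K_y$.

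There is no real obstacle here. The only point needing care is confirming the block identification of $\AA\widehat\WW$, which comes from the feasibility of each stored snapshot $\ppi_*(\aalpha_r)$. The factorization through $\textbf{M}_x$ and $\textbf{M}_y$ then gives the bound without any assumption on linear independence of the $\mmu_k$ or $\nnu_l$.
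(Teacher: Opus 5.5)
Your proposal is correct and follows the same argument as the paper. Each column of $(\AA\widehat\WW)_x$ is a marginal $\mmu(\aalpha_r)$, so it lies in the span of $\mmu_1,\dots,\mmu_{K_x}$, and likewise for the $y$ block; your factorization through $(\mmu_1|\cdots|\mmu_{K_x})$ simply spells out the paper's one-line span argument.
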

\begin{proof}
In this proof, we only focus on $(\AA\widehat\WW)_x$, but the same reasoning can be applied on $(\AA\widehat\WW)_y$.
For every parameter $\aalpha$, the marginal $\mmu(\aalpha)$ is a convex (thus linear) combination of the $K_x$ marginals $\{ \mmu_{k} \}_{1 \leq k \leq K_x}$. In particular, since every column of $(\AA\widehat\WW)_x$ is a linear combination of this family of marginals, whose rank is at most $K_x$.
\end{proof}
This rank results motivates us to find some $\widehat \UU$ and $\widehat \VV$ that would have at most $K_x$ and $K_y$ columns for the reduced problem, and that would however satisfy that the optimal value fo the half reduced problem \eqref{eq: RPP} and the reduced problem \eqref{eq: RP} would be the same.
The following property gives a sufficient condition on $\widehat \UU$ and $\widehat \VV$ to guarantee that these optimal values are equal.
\begin{prop}\label{prop: U and V low rank}
      Let $\aalpha \in \cA$ be a parameter. suppose that the semi-reduced order model \eqref{eq: RPP} is feasible, and that following rank conditions are satisfied :
      \begin{equation}
        \mathrm{rank}((\AA\widehat\WW)_x) = \mathrm{rank}(\widehat \UU^T (\AA \widehat\WW)_x),    
      \end{equation}
       and
       \begin{equation}
           \mathrm{rank}((\AA\widehat\WW)_y) = \mathrm{rank}(\widehat \VV^T (\AA \widehat\WW)_y).
       \end{equation}
    Then the half reduced- problem \eqref{eq: RPP} and the reduced problem \eqref{eq: RP} have the same optimal value.
\end{prop}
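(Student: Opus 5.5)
The plan is to show that the feasible sets of \eqref{eq: RPP} and \eqref{eq: RP} coincide, as subsets of $\RR^R_+$, for every parameter of interest. The objectives are identical ($\langle \pp, \widehat\WW^T\cc\rangle$), so equal feasible sets give equal optimal values.

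One inclusion is immediate. If $\AA\widehat\WW\pp = (\mmu(\aalpha),\nnu(\aalpha))^T$, then multiplying by $\widehat\sS^T$ gives the reduced constraint. Hence $I_R(\aalpha)\le I_{R_P}(\aalpha)$, since the reduced model relaxes the constraints.

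For the reverse inclusion, take $\pp\in\RR^R_+$ satisfying the reduced constraints. The key point is that the right-hand side $\mmu(\aalpha)$ lies in the column space of $(\AA\widehat\WW)_x$. This is where the feasibility hypothesis is used: there is some $\pp_0\ge 0$ with $(\AA\widehat\WW)_x\pp_0=\mmu(\aalpha)$, and likewise $(\AA\widehat\WW)_y\pp_0=\nnu(\aalpha)$. Set $\dd:=\pp-\pp_0$. The reduced constraints give $\widehat\UU^T(\AA\widehat\WW)_x\dd=0$, and I want to conclude $(\AA\widehat\WW)_x\dd=0$.

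This is a linear algebra statement. Write $B:=(\AA\widehat\WW)_x$. Clearly $\ker B\subset\ker(\widehat\UU^T B)$. The rank hypothesis together with rank–nullity (both matrices have $R$ columns) gives $\dim\ker B=\dim\ker(\widehat\UU^T B)$. The kernels are therefore equal, so $B\dd=0$, that is $B\pp=\mmu(\aalpha)$. The same argument with $\widehat\VV$ and $(\AA\widehat\WW)_y$ gives $(\AA\widehat\WW)_y\pp=\nnu(\aalpha)$. Thus $\pp$ is feasible for \eqref{eq: RPP}, and the feasible sets coincide, giving $I_R(\aalpha)=I_{R_P}(\aalpha)$, including the case where both infima are $-\infty$, which cannot occur anyway under Lemma \ref{lemma: compactness}.

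The main obstacle is placing the right-hand side in the range of $B$, so that the difference $\dd$ makes sense. This is exactly why feasibility of \eqref{eq: RPP} is assumed rather than derived. Without it, the statement could fail: $\widehat\UU^T$ might identify $\mmu(\aalpha)$ with some element of $\mathrm{Im}(B)$ even though $\mmu(\aalpha)\notin\mathrm{Im}(B)$. No duality argument is needed.
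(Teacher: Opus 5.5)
Your proof is correct and follows the paper's route. Like the paper, you show that the two feasible sets in $\RR^R_+$ coincide, using feasibility of \eqref{eq: RPP} to place $\mmu(\aalpha)$ in the range of $(\AA\widehat\WW)_x$. The only difference is the linear-algebra step: the paper uses Lemma~\ref{lemma: A = ABC} to get $\mathbf{F}$ with $(\AA\widehat\WW)_x=\mathbf{F}^T\widehat\UU^T(\AA\widehat\WW)_x$ and multiplies the reduced constraint by $\mathbf{F}^T$, while you use the equivalent, slightly more direct kernel equality $\ker(\AA\widehat\WW)_x=\ker\big(\widehat\UU^T(\AA\widehat\WW)_x\big)$ from rank--nullity.
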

Before proving this Proposition, we need the following simple linear algebra Lemma.
\begin{lem}\label{lemma: A = ABC}
    Let $\mathbf{B} \in \RR^{D_1 \times D_2}$ and $\mathbf{E}\in \RR^{D_2 \times D_3}$ be matrices such that $\mathrm{Rank}(\textbf{B}) = \mathrm{Rank}(\textbf{BE}) $. Then, there exists a matrix $\textbf{F} \in \RR^{D_3 \times D_2}$ such that :
    \begin{equation}
        \textbf{B} = \textbf{BEF}.
    \end{equation}
\end{lem}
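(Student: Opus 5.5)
We need to prove: if $\mathrm{Rank}(\mathbf{B}) = \mathrm{Rank}(\mathbf{BE})$, then there is $\mathbf{F}$ with $\mathbf{B} = \mathbf{BEF}$. The plan is to compare column spaces. First observe that every column of $\mathbf{BE}$ is a linear combination of columns of $\mathbf{B}$, so $\mathrm{Im}(\mathbf{BE}) \subset \mathrm{Im}(\mathbf{B})$ as subspaces of $\RR^{D_1}$. The rank hypothesis says these two subspaces have the same dimension. Since one is contained in the other, they are equal: $\mathrm{Im}(\mathbf{BE}) = \mathrm{Im}(\mathbf{B})$.

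Next I will build $\mathbf{F}$ column by column. Denote by $\mathbf{b}_j$, $1 \leq j \leq D_2$, the $j$-th column of $\mathbf{B}$. Since $\mathbf{b}_j \in \mathrm{Im}(\mathbf{B}) = \mathrm{Im}(\mathbf{BE})$, there exists a vector $\mathbf{f}_j \in \RR^{D_3}$ such that $\mathbf{BE}\mathbf{f}_j = \mathbf{b}_j$. Let $\mathbf{F} \in \RR^{D_3 \times D_2}$ be the matrix whose $j$-th column is $\mathbf{f}_j$. Then the $j$-th column of $\mathbf{BEF}$ is $\mathbf{BE}\mathbf{f}_j = \mathbf{b}_j$ for every $j$, so $\mathbf{BEF} = \mathbf{B}$.

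No step is a real obstacle. The only point needing care is the inclusion-plus-equal-dimension argument that gives equality of the column spaces, which is standard finite-dimensional linear algebra. If an explicit formula is wanted, one can take $\mathbf{F} = (\mathbf{BE})^{+}\mathbf{B}$ using the Moore--Penrose pseudo-inverse. This works because $(\mathbf{BE})(\mathbf{BE})^{+}$ is the orthogonal projector onto $\mathrm{Im}(\mathbf{BE}) = \mathrm{Im}(\mathbf{B})$, so it fixes every column of $\mathbf{B}$.
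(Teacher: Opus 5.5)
Your proof is correct and follows the paper's route. You get $\mathrm{Im}(\mathbf{BE}) = \mathrm{Im}(\mathbf{B})$ from the inclusion plus the rank hypothesis, then assemble $\mathbf{F}$ column by column by solving $\mathbf{BE}\mathbf{f}_j = \mathbf{b}_j$; your explicit choice $\mathbf{F} = (\mathbf{BE})^{+}\mathbf{B}$ is also valid but is not in the paper.
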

\begin{proof}
    In general, the inclusion $\mathrm{Im}(\textbf{BE}) \subset \mathrm{Im}(\textbf{B})$ holds. But since by hypothesis, the rank of the two matrices of consideration are equal. we have equality of the image spaces:
    \begin{equation}
        \mathrm{Im}(\textbf{BE}) = \mathrm{Im}(\textbf{B}).
    \end{equation}
    Thus, for every $1\leq d \leq D_2$, by hypothesis, $\textbf{B}\ee_d \in \mathrm{Im}(\textbf{B}) = \mathrm{Im}(\textbf{BE})$ (here $\{\ee_d\}_{1 \leq d\leq D_2}$ are the canonical basis' vector). Therefore, there exists a vector $\ff_d  \in \RR^{D_3}$ such that $\textbf{B}\ee_d =\textbf{BE}\ff_d$.
    Stacking every $\{\ee_d\}_{1\leq d \leq D_2}$ columnwise, we obtain 
    \begin{equation}
        \textbf{B} = \textbf{B}\textbf{Id}_{D_2} = \textbf{BEF},
    \end{equation}
    where $\textbf{F}$ is obtained by stacking columnwise the $\{\ff_i\}_{1 \leq i \leq m}$.
\end{proof}
We now ready to prove Proposition \ref{prop: U and V low rank}.
\begin{proof}
    We are going to show that the feasible sets of problems \eqref{eq: RPP} and \eqref{eq: RP} are the same. First, a feasible point of \eqref{eq: RPP} is feasible for \eqref{eq: RP}. So, we have to a feasible point \eqref{eq: RP} is also feasible for \eqref{eq: RPP}. For sake of clarity, we only focus on the upper part of the constraint equality, that is we prove that if $\pp(\aalpha)$ satisfies:
    \begin{equation}
        \UU^T (\AA \widehat \WW)_{x} \pp(\aalpha) = \UU^T \mmu(\aalpha), 
    \end{equation}
    then 
    \begin{equation}
    (\AA \widehat \WW)_{x} \pp(\aalpha) = \mmu(\aalpha). 
    \end{equation}
    By hypothesis, and since the rank is not affected by transposition, the rank equality $\mathrm{rank}((\AA \WW)^{T}_x) = \mathrm{rank}((\AA \WW)^{T}_x \UU)$ holds. From Lemma~\ref{lemma: A = ABC}, an by transposing again, we deduce that there exists a matrix $\textbf{F}$ such that $(\AA\widehat\WW)_x = \textbf{F}^T \UU^T (\AA\widehat\WW)_x$.
    So, let $\aalpha \in \cA$ be a parameter, and $\pp(\aalpha) \in \RR^R$ satisfying:
    \begin{equation} \label{proof eq: equality}
        \UU^T (\AA\widehat\WW)_x \pp(\aalpha)  = \UU^T \mmu(\aalpha).
    \end{equation}
    Since we supposed that the feasible set of \eqref{eq: RPP} is not empty, there exists a vector $\textbf{s}(\aalpha)$ such that 
    \begin{equation}
         \mmu(\aalpha) = (\AA \widehat\WW)_x \textbf{s}(\aalpha) = \textbf{F}^T \UU^T (\AA \widehat \WW)_x \textbf{s}(\aalpha) .
    \end{equation}
    And therefore, mutliplying \eqref{proof eq: equality} by $\textbf{F}^T$, we obtain
    \begin{equation}
      (\AA\widehat\WW)_x \pp(\aalpha) =   \textbf{F}^T  \UU^T (\AA\widehat\WW)_x \pp(\aalpha)  = \textbf{F}^T \UU^T  \mmu(\aalpha)  = \textbf{F}^T \UU^T (\AA \widehat\WW)_x \textbf{s}(\aalpha) = \mmu(\aalpha).
    \end{equation}
    An equivalent reasoning leads to the fact that $\pp(\aalpha)$ satisfies the lower part of the constraint equality.
\end{proof}
We  shortly notice that if we find $\widehat\UU$ and $\widehat\VV$ satisfying the rank assumptions, there is ne need to add constant vectors to them to ensure boundedness of the problem as we did in Section \ref{subsection: compactness}.
\begin{cor}
    Under the hypotheses of Property \ref{prop: U and V low rank}, the reduced-problem \eqref{eq: RP} is bounded.
\end{cor}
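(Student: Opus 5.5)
The plan is to show that, under the rank hypotheses of Proposition~\ref{prop: U and V low rank}, the feasible set of \eqref{eq: RP} is a bounded subset of $\RR^R_+$. Boundedness of the objective then follows, since a continuous function on a bounded closed set is bounded below (and, given feasibility, attains its minimum).

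The first step is to reuse the proof of Proposition~\ref{prop: U and V low rank}. That proof shows more than equality of optimal values: every $\pp \in \RR^R_+$ satisfying the reduced constraints $\widehat\AA \pp = (\widehat\mmu(\aalpha), \widehat\nnu(\aalpha))$ also satisfies the semi-reduced constraints. In particular it satisfies $(\AA\widehat\WW)_x \pp = \mmu(\aalpha)$. This relies on Lemma~\ref{lemma: A = ABC} and on the feasibility of \eqref{eq: RPP}, which is part of the hypotheses. Hence the feasible sets of \eqref{eq: RP} and \eqref{eq: RPP} coincide.

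The second step is to bound this common feasible set. For such $\pp$, set $\widehat\ppi = \widehat\WW\pp$. It lies in $\RR_+^{\cN_x\cN_y}$ because the columns of $\widehat\WW$ are nonnegative transport plans. It satisfies $\AA\widehat\ppi = (\mmu(\aalpha),\nnu(\aalpha))$. Summing the first block of constraints exactly as in Lemma~\ref{lemma: compactness} gives $\sum_k \widehat\ppi_k = \sum_i \mmu(\aalpha)_i = 1$. On the level of $\pp$, each column $\ppi_*(\aalpha_r)$ of $\widehat\WW$ is a probability vector with total mass $1$, so
\begin{equation*}
\sum_{r=1}^R \pp_r = \11^T\widehat\WW\pp = 1 .
\end{equation*}
With $\pp\geq 0$, the feasible set is therefore contained in the simplex of $\RR^R$, hence bounded.

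The third step concludes. The feasible set is closed, as an intersection of a closed orthant with an affine subspace, so it is compact. It is also nonempty, by feasibility of \eqref{eq: RPP}. The linear objective $\langle \pp, \widehat\cc\rangle$ is therefore bounded (indeed by $\|\CC\|_\infty$ in absolute value) and attains its minimum. The main subtle point is the first step: one must extract from the previous proof that the \emph{feasible sets} coincide, not merely the optimal values. Once that is made explicit, the rest is a direct repetition of the mass-counting argument of Lemma~\ref{lemma: compactness}, with no need for the $\11_\bullet$ columns in $\widehat\UU$ or $\widehat\VV$.
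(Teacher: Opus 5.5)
Your proof is correct and follows the paper's argument. The paper's proof is the one-line observation that, by the proof of Proposition~\ref{prop: U and V low rank}, the feasible sets of \eqref{eq: RP} and \eqref{eq: RPP} coincide and the latter is bounded. Your mass-counting step, showing that this common set lies in the simplex of $\RR^R$ (so that the objective takes values in $[0,\|\CC\|_\infty]$), just makes explicit the boundedness the paper takes for granted.
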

\begin{proof}
    The feasible set of \eqref{eq: RP} is equal to the feasible set of \eqref{eq: RPP}, which is bounded.
\end{proof}
\subsubsection{Gram-Schmidt algorithm to build the matrices \texorpdfstring{$\widehat{\UU}$ and $\widehat{\VV}$}{} }
In this part, we propose a way, namely a Gram-Schmidt algorithm, to build $\widehat\UU$ and $\widehat\VV$ that satisfy the rank condition of Proposition \ref{prop: U and V low rank}. Moreover, $\widehat\UU$ and $\widehat\VV$ will be of dimension $\RR^{K_x \times \cN_x}$ (resp. $\RR^{K_y \times \cN_y}$). As before, only focus on the construction of $\widehat \UU$, because the construction of $\widehat \VV$  is analogous. 
To make the following method work, we place ourselves in that case where the generating family $\{\mmu_{k}\}_{1 \leq k \leq K_x}$ is a family of independent vectors (therefore of maximal rank $K_x$).
We first detail the construction in Algorithm \ref{alg: Gram-Schmidt}, and then we explain why $\widehat \UU$ build this way satisfies the condition of Property \ref{prop: U and V low rank}.
\begin{algorithm}
\caption{Gram-Schmidt process}\label{alg: Gram-Schmidt}
\begin{algorithmic}[1]
\State \textbf{Inputs:} $\{\mmu_k\}_{1 \leq k \leq K_x}$
\State \textbf{Outputs:} $\widehat\UU := \{\uu_k\}_{1 \leq k \leq K_x}$
\State \textbf{Initialisation:}
\State $\uu_1 = \mmu_1$

\For{$k = 2 , \dots, K_x$} 
\State $ \displaystyle \uu_k = \mmu_k - \sum_{z=1}^{k-1} \dfrac{\uu_z^T \mmu_k}{\| \uu_z \|^2} \uu_z$
\EndFor
\end{algorithmic}
\end{algorithm}

Let us verify that the matrix $\widehat\UU$ built this way satisfies the expected rank condition. This derives from direct consequences of the Gram-Schmidt orthogonalisation process.
\begin{lem}\label{lemma: Gram-Schmidt}
    Let $\{\uu_k\}_{1\leq k  \leq K_x}$ built via Gram-Schmidt algorithm \ref{alg: Gram-Schmidt} applied to the family of independant vectors $\{ \mmu_k\}_{1 \leq k \leq K_x}$. Then
    \begin{enumerate}
        \item For every $1\leq k \neq k' \leq K_x$, $\uu_k \perp \uu_{k'}$
        \item For $1\leq k \leq K_x$, $\mathrm{Im}(\mmu_1, \dots, \mmu_k) = \mathrm{Im}(\uu_1, \dots, \uu_k)$
        \item For  $2\leq k \leq K_x$, $\uu_k \in \mathrm{Im}(\mmu_1, \dots ,\mmu_{k-1})^{\perp}$
        \item For $1\leq k \leq K_x$, $\uu_k^T \mmu_k \neq 0$.
    \end{enumerate}
\end{lem}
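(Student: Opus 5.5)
The plan is to prove the four items together by induction on $k$, using the recursive definition in Algorithm~\ref{alg: Gram-Schmidt} and the standing assumption that $\{\mmu_k\}_{1\leq k\leq K_x}$ is linearly independent. The induction hypothesis at step $k-1$ will be the conjunction of the following statements.
\begin{itemize}
\item Every $\uu_z$ with $z\leq k-1$ is nonzero, so the denominators $\|\uu_z\|^2$ in the algorithm are well defined.
\item The vectors $\uu_1,\dots,\uu_{k-1}$ are pairwise orthogonal.
\item $\mathrm{Im}(\mmu_1,\dots,\mmu_{k-1})=\mathrm{Im}(\uu_1,\dots,\uu_{k-1})$.
\end{itemize}
The base case is immediate: $\uu_1=\mmu_1\neq 0$ by independence.

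For the inductive step I will first show orthogonality. For $z'\leq k-1$, I take the scalar product of the defining formula of $\uu_k$ with $\uu_{z'}$. Pairwise orthogonality of $\uu_1,\dots,\uu_{k-1}$ kills every term in the sum except $z=z'$, which leaves
\[
\uu_{z'}^T\uu_k=\uu_{z'}^T\mmu_k-\frac{\uu_{z'}^T\mmu_k}{\|\uu_{z'}\|^2}\|\uu_{z'}\|^2=0 .
\]
This gives item~1.

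Next I will show the span equality. By the formula, $\uu_k$ lies in $\mmu_k+\mathrm{Im}(\uu_1,\dots,\uu_{k-1})=\mmu_k+\mathrm{Im}(\mmu_1,\dots,\mmu_{k-1})$. Hence $\uu_k\in\mathrm{Im}(\mmu_1,\dots,\mmu_k)$, and conversely $\mmu_k\in\mathrm{Im}(\uu_1,\dots,\uu_k)$, which gives item~2. Independence of the $\mmu$'s forbids $\mmu_k\in\mathrm{Im}(\mmu_1,\dots,\mmu_{k-1})$, so $\uu_k\neq 0$. This nonvanishing is the point I expect to need the most care, because it is what keeps the next step of the algorithm well defined and closes the induction.

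Item~3 then follows from items~1 and~2. The vector $\uu_k$ is orthogonal to each $\uu_1,\dots,\uu_{k-1}$, and these span $\mathrm{Im}(\mmu_1,\dots,\mmu_{k-1})$, so $\uu_k$ is orthogonal to that whole space.

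For item~4, I write $\mmu_k=\uu_k+\mathbf{v}$ with $\mathbf{v}:=\sum_{z<k}\frac{\uu_z^T\mmu_k}{\|\uu_z\|^2}\uu_z$, which lies in $\mathrm{Im}(\uu_1,\dots,\uu_{k-1})$. Item~1 gives $\uu_k\perp\mathbf{v}$, so
\[
\uu_k^T\mmu_k=\|\uu_k\|^2>0 .
\]
For $k=1$ this is just $\|\mmu_1\|^2>0$.
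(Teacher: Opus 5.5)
Your proposal is correct and follows essentially the same induction as the paper: orthogonality from the scalar product with $\uu_{k'}$, the span equality from the two inclusions, item 3 from items 1 and 2, and item 4 from $\uu_k \neq 0$ (by independence) together with $\uu_k^T\mmu_k = \|\uu_k\|^2$. The one difference is that you carry $\uu_z \neq 0$ in the induction hypothesis, so the denominators $\|\uu_z\|^2$ are known to be well defined at each step; the paper proves this nonvanishing only inside item 4 and uses it tacitly elsewhere.
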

\begin{proof}
    We prove this lemma recursively: for k=1, $\uu_1 = \mmu_1$, thus $\mathrm{Im}(\uu_1) = \mathrm{Im}(\mmu_1)$ and $\uu_1^T\mmu_1 = \| \mmu_1\|^2 > 0$.
    
    Let us now suppose the lemma true up to rank $k-1 \geq 1$.
    Let $1\leq k' \leq k-1$. Using item 1 up to rank $k-1$ we have :
    \begin{equation}
        \uu_{k'}^T \uu_k = \uu_{k'}^T \mmu_k - \sum_{z=1}^{k-1} \dfrac{\uu_z^T \mmu_k}{\| \uu_z \|^2} \uu_{k'}^T\uu_z
        = \uu_{k'}^T \mmu_k -  \dfrac{\uu_{k'}^T \mmu_k}{\| \uu_{k'} \|^2} \uu_{k'}^T\uu_{k'}  = 0,
    \end{equation}
    which proves 1 up to rank $k$.
    
    Now, by item 2 at rank $k-1$, $\uu_k$ can be seen as a vector combination of $\mmu_1, \dots, \mmu_{k-1}$ and $ \mmu_k$. Thus $\uu_k \in \mathrm{Im} (\mmu_1, \dots, \mmu_k)$, and again by item 2 on rank $k-1$,  
    \begin{equation}
        \mathrm{Im}(\uu_1, \dots, \uu_{k-1}) = \mathrm{Im}(\mmu_1, \dots, \mmu_{k-1}), 
    \end{equation}
    thus, 
    \begin{equation}
     \mathrm{Im}(\uu_1, \dots, \uu_k) \subset \mathrm{Im}(\mmu_1, \dots, \mmu_k)  . 
    \end{equation}
    For the converse inclusion, we know that 
    \begin{equation}
        \mmu_k = \uu_k + \sum_{z=1}^{k-1} \dfrac{\uu_z^T \mmu_k}{\| \uu_z \|^2} \uu_z, 
    \end{equation} 
    thus $\mmu_k \in \mathrm{Im}(\uu_1, \dots \uu_{k-1}, \uu_k)$.
    Again by item 2 applied on $k-1$ we obtain
    \begin{equation}
    \mathrm{Im}(\mmu_1, \dots, \mmu_k) \subset \mathrm{Im}(\uu_1, \dots, \uu_k).    
    \end{equation}
    We deduce equality from the two inclusions, and item 2 is true at rank $k$. 
    
    Item 3 is a consequence of items 1 and 2 :  since $\uu_k \perp \uu_{k'}$ for $1\leq k' \leq k-1$, $\uu_k \in \mathrm{Im}(\uu_1, \dots, \uu_{k-1})^\perp$ By item 2, $\mathrm{Im}(\uu_1, \dots, \uu_{k-1})^\perp = \mathrm{Im}(\mmu_1, \dots, \mmu_{k-1})^\perp$, thus $\uu_k \in \mathrm{Im}(\mmu_1, \dots, \mmu_{k-1})^\perp$.

    Finally, item 4 is a consequence of items 1 and 2 conjugated to the fact that, by hypothesis, $\mmu_k$ is independent of $\mmu_1, \dots \mmu_{k-1}$. By independence of the family $(\mmu_1, \dots,\mmu_k)$, and by item 2, $\uu_k \neq 0$. Indeed, otherwise we would have : 
    \begin{equation}
        0 = \mmu_k - \sum_{z=1}^{k-1} \dfrac{\uu_z^T \mmu_k}{\| \uu_z \|^2} \uu_z,
    \end{equation}
    thus $\mmu_k \in \mathrm{Im}(\uu_1, \dots, \uu_{k-1}) = \mathrm{Im}(\mmu_1, \dots, \mmu_{k-1})$, and that would contradict the independency hypothesis. Let us now write the scalar product and use item $1$:
    \begin{equation}
        \uu_k^T \mmu_k = \uu_k^T \left(\uu_k + \sum_{z = 1}^{k-1} \dfrac{\uu_{z}^T \mmu_k}{\| \uu_z \|^2} \uu_k^T\uu_z\right) = \| \uu_k\|^2> 0, 
    \end{equation}
    which gives item 4 at rank $k$.
    
\end{proof}
\begin{cor}\label{corollary: GS rk K0}
Let $\widehat\UU$ be the matrix build with Gram-Schmidt algorithm \ref{alg: Gram-Schmidt} on the family $\{\mmu(\aalpha) \}_{\aalpha \in \cA}$. Then $\widehat\UU^T \begin{pmatrix} \mmu_0 | \dots |\mmu_{K_x}  \end{pmatrix}$ is of rank $K_x$.
\end{cor}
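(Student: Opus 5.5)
The plan is to read the statement in the only way that makes it well-defined, and then reduce it to Lemma~\ref{lemma: Gram-Schmidt}. The family $\{\mmu(\aalpha)\}_{\aalpha\in\cA}$ is infinite, but by definition every $\mmu(\aalpha)$ lies in $\mathrm{span}(\mmu_1,\dots,\mmu_{K_x})$. This span has dimension $K_x$ under the standing independence assumption.

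I interpret ``Gram--Schmidt on the family $\{\mmu(\aalpha)\}_{\aalpha\in\cA}$'' as Algorithm~\ref{alg: Gram-Schmidt} run on an independent generating subfamily. Two choices are natural:
\begin{itemize}
\item the generators themselves, obtained at the extreme points $\mmu(\ee_k,\ff_l)=\mmu_k$ of Lemma~\ref{lemma: extreme points};
\item any sequence of $\mmu(\aalpha)$'s in which vectors giving a zero residual are discarded.
\end{itemize}
In either case, the output $\widehat\UU=(\uu_1|\dots|\uu_{K_x})$ has $K_x$ pairwise orthogonal, nonzero columns spanning $\mathrm{span}(\mmu_1,\dots,\mmu_{K_x})$, by items 1, 2 and 4 of Lemma~\ref{lemma: Gram-Schmidt}.

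I would read the columns $\mmu_0,\dots,\mmu_{K_x}$ as the generators $\mmu_1,\dots,\mmu_{K_x}$, possibly together with one extra column $\mmu_0$. I assume $\mmu_0$ lies in the same span, as every $\mmu(\aalpha)$ does.

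\textbf{Upper bound.} This is immediate: $\widehat\UU^T(\mmu_0|\dots|\mmu_{K_x})$ has $K_x$ rows, so its rank is at most $K_x$.

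\textbf{Lower bound.} I would exhibit a nonsingular $K_x\times K_x$ submatrix, namely $\mathbf{T}:=\widehat\UU^T(\mmu_1|\dots|\mmu_{K_x})$, with entries $\mathbf{T}_{z,k}=\uu_z^T\mmu_k$. For $z>k$, item 3 of Lemma~\ref{lemma: Gram-Schmidt} gives $\uu_z\in\mathrm{Im}(\mmu_1,\dots,\mmu_{z-1})^\perp$. Since $\mmu_k$ lies in that image, $\mathbf{T}_{z,k}=0$, so $\mathbf{T}$ is upper triangular. Item 4 gives $\mathbf{T}_{k,k}=\uu_k^T\mmu_k=\|\uu_k\|^2\neq 0$. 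Hence $\det\mathbf{T}=\prod_k\|\uu_k\|^2>0$, and the rank is at least $K_x$.

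\textbf{Alternative, ordering-independent argument.} This is useful if $\widehat\UU$ was built from $\mmu(\aalpha)$'s other than the generators. The map $\widehat\UU^T$ restricted to $E:=\mathrm{span}(\mmu_1,\dots,\mmu_{K_x})$ is injective. Indeed, if $\vv\in E$ and $\widehat\UU^T\vv=0$, then $\vv\perp\mathrm{Im}(\widehat\UU)=E$, so $\vv=0$. The columns $\mmu_0,\dots,\mmu_{K_x}$ span $E$, which has dimension $K_x$. An injective linear map preserves the dimension of the span of the columns, so the rank of the product is exactly $K_x$.

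\textbf{Main obstacle.} The difficulty is interpretive rather than technical. One must justify that Gram--Schmidt applied to the infinite family $\{\mmu(\aalpha)\}$ produces exactly $K_x$ nonzero vectors spanning $E$. I would argue this via Corollary~\ref{cor: finite number of extreme points} and Lemma~\ref{lemma: extreme marges}: the marginals are generated by the finitely many extreme-point marginals, which are the $\mmu_k$, so it suffices to orthogonalize these and discard dependent vectors. One must also fix that the stray column $\mmu_0$ lies in $E$. The injectivity argument then handles it without any ordering issues.
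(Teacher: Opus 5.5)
Your lower-bound argument is exactly the paper's proof: item 3 of Lemma~\ref{lemma: Gram-Schmidt} makes $\widehat\UU^T(\mmu_1|\dots|\mmu_{K_x})$ upper triangular, and item 4 gives it a nonzero diagonal. The paper tacitly makes the same reading of the column indices and of the input family that you make explicit. Your row-count upper bound and the injectivity argument via $\mathrm{Im}(\widehat\UU)=\mathrm{span}(\mmu_1,\dots,\mmu_{K_x})$ are correct additions; the injectivity argument is the only place your assumption that $\mmu_0$ lies in this span is needed.
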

\begin{proof}
   Using the orthogonality conclusion 3 of the lemma \ref{lemma: Gram-Schmidt}, we have :
    \begin{equation}
        \widehat\UU^T \begin{pmatrix} \mmu_0 | \dots | \mmu_{K_x} \end{pmatrix} = \begin{pmatrix} \uu_1^T \mmu_1 &\dots& \uu_1^T \mmu_{K_x} \\
        \vdots && \vdots  \\
        \uu_{K_x}^T \mmu_1  &\dots &\uu_{K_x}^T \mmu_{K_x}
        \end{pmatrix} =
        \begin{pmatrix}
             \uu_1^T \mmu_1 &\dots&& \uu_1^T \mmu_{K_x} \\
             0 & \uu_2^T \mmu_2 & \dots &\uu_2^T\mmu_{K_x}& \vdots \\
        \vdots & \ddots & \ddots &\\
        0 &\dots & 0&\uu_{K_x}^T \mmu_{K_x} 
        \end{pmatrix}.
    \end{equation}
    By conclusion 4 in the lemma \ref{lemma: Gram-Schmidt}, this upper triangular matrix has non zero diagonal and is thus of maximal rank $K_x$.
\end{proof}
\begin{cor} \label{corollary: GS satisfies rank condition}
    Let $\widehat\UU$ be the matrix build with Gram-Schmidt algorithm \ref{alg: Gram-Schmidt} on the family $\{\mmu(\aalpha) \}_{\aalpha \in \cA}$. Then $\mathrm{rank}(\widehat\UU^T (\AA \widehat \WW)_x)= \mathrm{rank}((\AA \widehat \WW)_x) $.
\end{cor}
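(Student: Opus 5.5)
We prove the two inequalities $\mathrm{rank}(\widehat\UU^T (\AA \widehat \WW)_x)\leq \mathrm{rank}((\AA \widehat \WW)_x)$ and $\mathrm{rank}(\widehat\UU^T (\AA \widehat \WW)_x)\geq \mathrm{rank}((\AA \widehat \WW)_x)$. The first is immediate, since left multiplication by $\widehat\UU^T$ cannot increase the rank.

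For the reverse inequality, we factor $(\AA\widehat\WW)_x$ through the generating family. Write $\mathbf{M}_x := \begin{pmatrix} \mmu_1 | \dots | \mmu_{K_x}\end{pmatrix}\in\RR^{\cN_x\times K_x}$. Each column of $(\AA\widehat\WW)_x$ is $\mmu(\aalpha_r) = \mathbf{M}_x \aalpha^x_r$, so that
\begin{equation*}
(\AA\widehat\WW)_x = \mathbf{M}_x \mathbf{G}, \qquad \mathbf{G} := \begin{pmatrix} \aalpha_1^x | \cdots | \aalpha_R^x\end{pmatrix}\in\RR^{K_x\times R}.
\end{equation*}
Consequently $\widehat\UU^T(\AA\widehat\WW)_x = (\widehat\UU^T\mathbf{M}_x)\,\mathbf{G}$.

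By Corollary \ref{corollary: GS rk K0}, the $K_x\times K_x$ matrix $\widehat\UU^T\mathbf{M}_x$ has rank $K_x$, so it is invertible. Multiplying by an invertible matrix preserves rank, hence $\mathrm{rank}(\widehat\UU^T(\AA\widehat\WW)_x) = \mathrm{rank}(\mathbf{G})$.

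On the other side, the family $\{\mmu_k\}$ is assumed linearly independent, so $\mathbf{M}_x$ is injective. Therefore $\mathrm{rank}(\mathbf{M}_x\mathbf{G}) = \mathrm{rank}(\mathbf{G})$ as well. The two ranks coincide, which is the claim; the same argument applied to $\widehat\VV$ and the $\nnu_l$ gives the $y$-part.

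The only delicate point is the bookkeeping. Corollary \ref{corollary: GS rk K0} is stated for the Gram--Schmidt basis built from the independent generators $\mmu_1,\dots,\mmu_{K_x}$, even though its indices are written loosely. It must be applied to exactly the matrix $\mathbf{M}_x$ above, and it is there that the independence hypothesis is used twice: once for the invertibility of $\widehat\UU^T\mathbf{M}_x$, and once for the injectivity of $\mathbf{M}_x$. Beyond this, no further obstacle is expected.
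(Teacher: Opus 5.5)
Your proof is correct and follows the paper's route. The paper also factors $(\AA\widehat\WW)_x$ through $(\mmu_1|\dots|\mmu_{K_x})$ and combines Corollary \ref{corollary: GS rk K0} with Lemma \ref{lemma: rank condition stable by composition} (with $\mathbf{E}=\widehat\UU^T$). That lemma's kernel argument is what your invertibility/injectivity step makes explicit. One small simplification: injectivity of $\mathbf{M}_x$ already follows from the invertibility of $\widehat\UU^T\mathbf{M}_x$, so independence is really used only once, through Corollary \ref{corollary: GS rk K0}.
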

To prove Corollary \ref{corollary: GS satisfies rank condition} we use the following lemma of linear algebra.
\begin{lem}\label{lemma: rank condition stable by composition}
    Let $D_1,D_2,D_3$ be positive integers and $\textbf{E} \in \RR^{D_1\times D_2}$ and $\textbf{F} \in \RR^{D_2 \times D_3}$ be matrices. Suppose that $\mathrm{rank}(\textbf{EF}) = \mathrm{rank}(\textbf{F})$. Then, for any positive integer $Q$, for any matrix $\textbf{M} \in \RR^{D_3 \times Q}$, $\mathrm{rank}(\textbf{EFM}) = \mathrm{rank}(\textbf{FM})$.
\end{lem}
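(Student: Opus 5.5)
The plan is to work with kernels rather than images, using rank--nullity. Since $\mathrm{rank}(\textbf{EF}) = \mathrm{rank}(\textbf{F})$, the goal is $\mathrm{rank}(\textbf{EFM}) = \mathrm{rank}(\textbf{FM})$, and both matrices $\textbf{FM}$ and $\textbf{EFM}$ have $Q$ columns. It therefore suffices to show $\ker(\textbf{EFM}) = \ker(\textbf{FM})$; rank--nullity then gives equality of ranks.

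First we establish the key intermediate fact that $\textbf{E}$ is injective on $\mathrm{Im}(\textbf{F})$. The inclusion $\ker(\textbf{F}) \subset \ker(\textbf{EF})$ is trivial, and both kernels live in $\RR^{D_3}$. By rank--nullity,
\[
\dim\ker(\textbf{EF}) = D_3 - \mathrm{rank}(\textbf{EF}) = D_3 - \mathrm{rank}(\textbf{F}) = \dim\ker(\textbf{F}),
\]
so $\ker(\textbf{EF}) = \ker(\textbf{F})$. Now take $y = \textbf{F}x \in \mathrm{Im}(\textbf{F})$ with $\textbf{E}y = 0$. Then $x \in \ker(\textbf{EF}) = \ker(\textbf{F})$, hence $y = 0$. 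This shows $\ker(\textbf{E}) \cap \mathrm{Im}(\textbf{F}) = \{0\}$.

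Next we transfer this to $\textbf{FM}$. The inclusion $\ker(\textbf{FM}) \subset \ker(\textbf{EFM})$ is immediate. Conversely, if $\textbf{EFM}z = 0$, then $\textbf{F}(\textbf{M}z) \in \mathrm{Im}(\textbf{F}) \cap \ker(\textbf{E}) = \{0\}$, so $z \in \ker(\textbf{FM})$. Hence the two kernels coincide and the ranks are equal.

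An equivalent route is to note that the restriction of $\textbf{E}$ to $\mathrm{Im}(\textbf{F})$ is a linear map onto $\mathrm{Im}(\textbf{EF})$ between spaces of the same dimension, hence a bijection. Since $\mathrm{Im}(\textbf{FM}) \subset \mathrm{Im}(\textbf{F})$, it is mapped isomorphically onto $\mathrm{Im}(\textbf{EFM})$. The only step needing care is the injectivity of $\textbf{E}$ on $\mathrm{Im}(\textbf{F})$, and the dimension count above settles it.
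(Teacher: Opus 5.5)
Your proof is correct and follows essentially the paper's argument: both show $\ker(\textbf{EF}) = \ker(\textbf{F})$ by rank--nullity, then deduce $\ker(\textbf{EFM}) = \ker(\textbf{FM})$ and conclude by rank--nullity on $Q$ columns. You also count dimensions correctly in $\RR^{D_3}$, where the paper's text mistakenly writes $D_2$. The only difference is that the paper goes straight through the preimage $\ker(\textbf{FM}) = \{z : \textbf{M}z \in \ker(\textbf{F})\} = \{z : \textbf{M}z \in \ker(\textbf{EF})\}$, so your intermediate step $\ker(\textbf{E}) \cap \mathrm{Im}(\textbf{F}) = \{0\}$ is a harmless detour.
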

\begin{proof}
    In general, $\mathrm{ker}(\textbf{F}) \subset \mathrm{ker}(\textbf{EF})$. But, since $D_2= \mathrm{rank}(\textbf{F}) + \dim(\ker(\textbf{F})) = \mathrm{rank}(\textbf{EF}) + \dim(\ker(\textbf{EF}))$, the rank equality implies $\mathrm{ker}(\textbf{F}) = \mathrm{ker}(\textbf{EF})$. Thus 
    \begin{equation}
        \mathrm{ker}(\textbf{FM}) = \left\lbrace \ff \in \RR^Q, \textbf{M}\ff \in \mathrm{ker}(\textbf{F)} \right\rbrace 
        =   \left\lbrace \ff \in \RR^Q, \textbf{M}\ff \in \mathrm{ker}(\textbf{EF)} \right\rbrace
        = \mathrm{ker}(\textbf{EFM}).
    \end{equation}
    Therefore, $\mathrm{rank(\textbf{EFM})} = \mathrm{rank(\textbf{FM})} $.
\end{proof}
We now prove Corollary \ref{corollary: GS satisfies rank condition}.
\begin{proof}
    Since every column of $\mathrm{rank}((\AA \widehat \WW)_x)$ is a convex combination of the family of vectors $\{ \mmu_{k} \}_{1 \leq k \leq K_x}$, there exists a matrix $\textbf{M}$ such that :
    \begin{equation}
        \begin{pmatrix} \mmu_0 | \dots | \mmu_{K_x} \end{pmatrix} \textbf{M} = \begin{pmatrix} \mmu(\aalpha_1) | \dots | \mmu_{K_x}\end{pmatrix} = (\AA \WW)_x.
    \end{equation}
    From corollary \ref{corollary: GS rk K0}, the hypothesis of lemma \ref{lemma: rank condition stable by composition} hold with \ref{lemma: rank condition stable by composition} with $\textbf{E} = \widehat{\UU}^T$ and $\textbf{F} =  \begin{pmatrix} \mmu_0 | \dots | \mmu_{K_x} \end{pmatrix}$. Thus, taking $\textbf{M}$ as defined above, we obtain :
    \begin{equation}
       \mathrm{rank}\big(\widehat \UU^T (\AA \WW)^x \big) = \mathrm{rank}\big( \widehat \UU^T \begin{pmatrix} \mmu_0 | \dots | \mmu_{K_x} \end{pmatrix}\textbf{M} \big) =  \mathrm{rank}\big(\begin{pmatrix} \mmu_0 | \dots | \mmu_{K_x} \end{pmatrix} \textbf{M} \big)  = \mathrm{rank}\big( \AA \WW)_x \big).
    \end{equation}
\end{proof}

\section{A posteriori error estimations}\label{section : A posteriori error approximator}
In the previous section, we focused on building a reduced-order model \eqref{eq: RP} from the high-fidelity model \eqref{eq: primal}, and we found conditions ensuring that the model built had solutions. Now, let $\aalpha \in \cA$ be a parameter,  suppose that we have computed numerically the optimal value $I_R(\aalpha)$ of the reduced-order model at $\aalpha$. We would like to estimate how far this value is from the optimal cost $I(\aalpha)$ of the high-fidelity model. To do so, we search for quantities that bound the value $|I(\aalpha) - I_R(\aalpha)|$, and that can be computed without having to solve the high-fidelity model at $\aalpha$. We call such a quantity an a posteriori error estimation.
We present two error estimations. The first one is built using C-transforms of a pair of potentials solution of the reduced-order model \eqref{eq: RD}. The second one is based on the continuity of the model with respect to the parameter $\aalpha$.
%
\subsection{Error estimation with \texorpdfstring{$c$}{}-transforms}\label{subsection : error approximation $c$-transform}
For this estimation, we suppose that the reduced-order model \eqref{eq: RP} and the half reduced-order model on primal \eqref{eq: RPP} share the same optimal value (this is satisfied when the conditions of Property \ref{prop: U and V low rank} hold). Since the half reduced-order model \eqref{eq: RPP} corresponds to the minimization problem described by the high-fidelity model \eqref{eq: primal} to which we add the constraint to belong to the subcone $\cW_+$, its optimal value is higher than the optimal value of the high-fidelity problem. We thus deduce directly the following inequality for every parameter $\aalpha \in \cA$ : 
\begin{equation}
    I_R(\aalpha) - I(\aalpha) \geq 0.
\end{equation}
 We thus focus on bounding the quantity $ I_R(\aalpha) - I(\aalpha) $ from above.
Let us first recall, that, by strong duality, the optimal values of the primal and dual of the high-fidelity model (resp. of the reduced-order model) are equal : $J(\aalpha) = I(\aalpha)$ and $J_R(\aalpha) = I_R(\aalpha)$. %
Now, let $(\aa^*(\aalpha), \bb^*(\aalpha))$ be a solution of \eqref{eq: RD}  and $\widehat{\pphi}_*(\aalpha)$ and $\widehat{\ppsi}_*(\aalpha)$ the associated Kantorovitch potentials:
\begin{equation} \label{eq: bound below phi psi definition}
    \begin{array}{ll}
         \displaystyle  \widehat{\pphi}_*(\aalpha) := \sum_{n =1}^N \aa^*_n(\aalpha) \uu_n \\
         \displaystyle  \widehat{\ppsi}_*(\aalpha) := \sum_{m =1}^M \bb^*_m(\aalpha) \vv_m .
    \end{array}
\end{equation}
Since this pair is optimal, the followig equality holds :
\begin{equation} \label{eq: proof reduced cost}
    I_R(\aalpha) = \left\langle \widehat{\pphi}_*(\aalpha), \mmu(\aalpha) \right\rangle + \left\langle \widehat{\ppsi}_*(\aalpha), \nnu(\aalpha) \right\rangle .
\end{equation}
We now use the $c$-transform and $\bar{\mathrm{c}}$-transform defined in \ref{def: $c$-transform} to create admissible points of the dual high-fidelity model \eqref{eq: dual}.
Indeed, the pairs $(\widehat{\pphi}_*(\aalpha), \widehat{\pphi}^{c}_*(\aalpha))$ and $(\widehat{\ppsi}^{\bar{c}}_*(\aalpha), \widehat{\ppsi}_*(\aalpha))$ are admissible for \eqref{eq: dual} (see e.g. \cite{santambrogio2015optimal} for a proof).
Let us focus on the pair $(\widehat{\pphi}_*(\aalpha), \widehat{\pphi}^{c}_*(\aalpha))$. Since it is admissible, we have the following inequality : 
\begin{equation}\label{eq: proof inequality opt value}
    \begin{array}{ll}
         I(\aalpha) &\geq \left\langle \widehat{\pphi}_*(\aalpha), \mmu(\aalpha) \right\rangle +
                        \left\langle \widehat{\pphi}^{c}_*(\aalpha), \nnu(\aalpha) \right\rangle.  \\
    \end{array}
\end{equation}

We now substract \eqref{eq: proof reduced cost} to \eqref{eq: proof inequality opt value}, and we obtain the following expression :
\begin{equation}
    \begin{array}{ll}
     I_R(\aalpha)   - I(\aalpha)  &\leq \left\langle  \widehat{\ppsi}_*(\aalpha)-  \widehat{\pphi}^{c}_*(\aalpha), \nnu(\aalpha) \right\rangle.   \\
    \end{array}
\end{equation}
By performing the same computation with $(\widehat{\ppsi}^{\bar{c}}_*(\aalpha), \widehat{\ppsi}_*(\aalpha))$, we obtain :
\begin{equation}
    \begin{array}{ll}
         I_R(\aalpha) - I(\aalpha)  &\leq \left\langle \widehat{\pphi}_*(\aalpha)- \widehat{\ppsi}^{\bar{c}}_*(\aalpha), \mmu(\aalpha) \right\rangle.   \\
    \end{array}
\end{equation}
We thus obtain the following a posteriori error estimation that bounds from above the difference $ I_R(\aalpha)  - I(\aalpha)$ :
\begin{equation}
     I_R(\aalpha)  - I(\aalpha) \leq \min \left(\left\langle \widehat{\pphi}^{c}_*(\aalpha) - \widehat{\ppsi}_*(\aalpha), \nnu(\aalpha) \right\rangle, \left\langle \widehat{\ppsi}^{\bar{c}}_*(\aalpha) - \widehat{\pphi}_*(\aalpha), \mmu(\aalpha) \right\rangle  \right).   
\end{equation}

Since this estimation uses $c$-transforms and $\bar{c}-$transforms, in general, it depends non linearly on the parameter $\aalpha$. We show in what follow that the exact computation of this estimation is of order $\cN_x\cN_y$. We then propose a way to approach this quantity by a variant of the Empirical Interpolation Method (EIM).

\paragraph{Evaluation of the computational }
We focus here on the computational cost of the term $\left\langle \widehat{\pphi}^{c}_*(\aalpha) - \widehat{\ppsi}_*(\aalpha), \nnu(\aalpha) \right\rangle$. We can deduce by analogy the number of operations needed to evaluate $\left\langle \widehat{\ppsi}^{\bar{c}}_*(\aalpha) - \widehat{\pphi}_*(\aalpha), \mmu(\aalpha) \right\rangle$.

We begin by replacing $\widehat\ppsi_*(\aalpha)$ with its expression in \eqref{eq: bound below phi psi definition}, and we express $\nnu(\aalpha)$ as a convex combination of $\{\nnu^{l} \}_{l = 1}^{K_y}$. We obtain a weighted sum of $M K_y$ terms:
\begin{equation}
    \left\langle\widehat{\ppsi}_*(\aalpha), \nnu(\aalpha) \right\rangle = 
    \sum_{m=1}^M \sum_{l = 1}^{K_y} \alpha^y_{l} b_m    \left\langle \vv^*_m, \nnu^{l} \right\rangle.
\end{equation}
The expression $\left\langle \widehat{\pphi}^{c}_*(\aalpha), \nnu(\aalpha) \right\rangle$ can be decomposed as follows:
\begin{equation}
    \left\langle \widehat{\pphi}^{c}_*(\aalpha) , \nnu(\aalpha) \right\rangle = 
   \sum_{l = 1}^{K_y} \alpha^y_{l}    \left\langle \widehat\pphi^{c}_*(\aalpha), \nnu^{l} \right\rangle.
\end{equation}
In general, the $c-$transform operation does not depend linearly on the parameter $\aalpha$, and its exact evaluation requires $\cN_y \cN_x$ operations.


\paragraph{EIM for fast evaluation of the error estimation}\label{subsection: EIM}
For a given $\aalpha \in \cA$, the exact evaluations of $\widehat\pphi^c_*(\aalpha)$ and of $\left\langle \widehat\pphi^c_*(\aalpha), \nnu(\aalpha)\right\rangle$ both require around $\cN_x\cN_y$ operations. In this section we implement a variant of the Empirical Interpolation Method (EIM) that enables to estimate $\widehat\pphi^c(\aalpha)$ at a reduced computational cost. (see e.g. \cite{maday2007general} or \cite{barrault2004empirical} for a general description of the EIM). 
More specifically, here our aim is to approximate $\widehat\pphi_*^c(\aalpha)$ as a linear combination of $M^{\text{eim}}$ independent vectors $\{\qq^{\ell}\})_{1 \leq \ell \leq M^{\text{eim}}}$:
\begin{equation}
    \widehat\pphi^c_*(\aalpha) \approx \sum_{\ell = 1}^{M^\eim} \bbeta_\ell(\aalpha) \qq^\ell.
\end{equation}
The coefficients $\{\bbeta^{\ell}(\aalpha)\})_{1 \leq \ell \leq M^{\text{eim}}}$ are found by solving a linear system of size $M^{\text{eim}^2}$, where the right hand side vector is formed of $M^{\text{eim}}$ fast evaluations of the $c$-transform of $\widehat \pphi_*(\aalpha)$:
\begin{equation}
    \sum_{\ell = 1}^{M^\eim} \qq^\ell_j \beta_{\ell}(\aalpha) = \widehat\pphi^{\text{Fast-C}}_{*j}(\aalpha), \; \text{for } j \in J^\eim,
\end{equation}
where 
\begin{equation}
    \widehat\pphi^{\text{Fast-C}}_{*j} := \arg\min_{i \in I^\eim } \CC_{i,j} - \widehat\pphi_{*i}(\aalpha).
\end{equation}
How to find $\{\qq^{\ell}\})_{1 \leq \ell \leq M^{\text{eim}}}$, the evaluation points and how to build a fast evaluation are determined by the offline phase of the EIM described in Algorithm \ref{alg: EIM}.
\begin{algorithm}
\caption{EIM (offline phase)}\label{alg: EIM}
\begin{algorithmic}[1]
\State \textbf{Inputs:} $\mathcal{G}:=\{\widehat\pphi (\aalpha)\}_{\aalpha \in \cA_{train}}$, $\mathcal{G^C}:=\{\widehat\pphi^C (\aalpha)\}_{\aalpha \in \cA_{train}}$  $M^{\text{eim}}, M^{'\text{eim}}$
\State \textbf{Outputs:} $I^\eim \subset \{1, \dots, \cN_x\}, J^\eim \subset \{1, \dots, \cN_y\}$, $\{\qq^1, \dots, \qq^{M^\eim} \} \in \RR^{\cN_y \times M^{\eim}}$
\State \textbf{Initialisation:}
\State $\Gg^1 = \arg\max_{\widehat\pphi^C \in \cG^C} |\widehat\pphi^C|$
\State $j_1 = \arg\max_{1\leq j \leq \cN_y} |\Gg^1_{j}|$
\State $\qq^1 = \dfrac{\Gg^1}{\Gg ^1_{j_1}}$
\State $\cI_1[\widehat\pphi] = \widehat\pphi_{j_1} \qq^1$
\State $J^\eim \xleftarrow{} \{j_1\}$

\For{$\meim = 2 , \dots, M^{\text{eim}}$} 
\State $\Gg^{\meim} = \arg\max_{\widehat\pphi^C \in \cG^C} \|\widehat\pphi^C - \cI_{\meim -1}[\widehat\pphi^C]\|$
\State $j_{\meim} = \arg\max_{1\leq j \leq \cN_y} |\Gg^{\meim}_{j}|$
\State $\qq^{\meim} = \dfrac{\Gg^{\meim} -  \cI_{\meim -1}[\Gg^{\meim}]}{(\Gg^{\meim} - \cI_{\meim -1}[\Gg^{\meim}])_{j_{\meim}}}$
\State $\cI_{\meim}[\widehat\pphi^C] = \sum_{\ell=  1}^{\meim} \beta_{\ell} \qq^{\ell}, $
\State where $\beta$ solves $\sum_{\ell = 1}^{\meim} \beta_{\ell} \qq^{\ell}_{j_l} = \widehat{\pphi}^C_{j_l} $ for $1 \leq l \leq \meim$
\State $J^\eim \leftarrow J \cup \{j_{\meim}\}$
\EndFor

\For{$1\leq \meim_1 \leq M^{\text{eim}}$ }
    \For{$1\leq \meim_2 \leq M^{'\text{eim}}$ }
    \State $i_{\meim_1, \meim_2} = \arg\min_{1\leq i\leq \cN_x} \CC_{i,j_{\meim_2}} - \Gg^{\meim_1}_{j_{\meim_2}}$
    \State $I^\eim \leftarrow \{i_{\meim_1,\meim_2}\} $
    \EndFor
\EndFor
\end{algorithmic}
\end{algorithm}

In the end, we have the following approximation:

\begin{equation}\label{eq: EIM cost}
    \left\langle \widehat\pphi^c(\aalpha), \nnu(\aalpha)\right\rangle \approx
    \sum_{l = 1}^{K_y} \sum_{\ell =1}^{M^{\text{eim}}} \bbeta_{\ell} \aalpha^y_{l} \left\langle \qq^\ell, \nnu_{l} \right\rangle.
\end{equation}

In term of computational costs, the determination of $\{\bbeta_{\ell}\}_{1 \leq \ell \leq M^{\text{eim}}}$ requires of order $M^{\text{eim}}M^{'\text{eim}}$ for finding the right-hand side, then of order $M^{\text{eim}^2}$ to solve the system (it has a triangular structure). The final scalar product requires around $K_yM^{\text{eim}}$ operations.

\subsection{Error estimation from continuous dependency in \texorpdfstring{$\aalpha$}{}} \label{subsection: erorr estimation continuity}
In this part, we use the continuity Property \ref{prop: contiuity with respect to alpha} to build an upper bound of the quantity $|I(\aalpha) - I_R'(\aalpha)|$.
Let $\cA_{\text{train}} \subset \cA$ be a training set, and suppose that we know the optimal cost of the high-fidelity problem $I({\aalpha'})$ for every parameter $\aalpha' \in \cA_{\text{train}}$ of this training set.
Let $\aalpha \in \cA$ be a parameter, then, for very $\aalpha'\in \cA_{\text{train}}$, we have :
\begin{equation}
    |I(\aalpha) - I_R(\aalpha)| \leq |I(\aalpha) - I(\aalpha')| + |I(\aalpha') - I_R(\aalpha)|.
\end{equation}
Using Property \ref{prop: contiuity with respect to alpha}, we obtain
\begin{equation}
    |I(\aalpha) - I_R(\aalpha)| \leq |I(\aalpha') - I_R(\aalpha)| + \|\CC\|_\infty  (2 \max(K_x, K_y) + 3 \min(K_x, K_y)) \|\aalpha - \aalpha'\|_\infty.
\end{equation}
We can then control this difference by choosing a suitable $\aalpha' \in \cA_{\text{train}}$, for example $\displaystyle \aalpha' \in \arg\min_{\aalpha' \in \cA_{\text{train}}} \big( |I(\aalpha') - I_R(\aalpha)| + \|\CC\|_\infty  (2 \max(K_x, K_y) + 3 \min(K_x, K_y)) \|\aalpha - \aalpha'\|_\infty \big)$ or  $\displaystyle \aalpha' \in \arg\min_{\aalpha' \in \cA_{\text{train}}}  \|\aalpha - \aalpha'\|_\infty$.
The complexity of this estimation depends on the cardinal $\#\cA_{\text{train}}$ of $\cA_{\text{train}}$ and of the way to choose $\aalpha'$. For the proposed $\aalpha'$ it will be of order $\#\cA_{\text{train}}$.

\section{Numerical results}\label{sec:num}
In this section, we first use a toy example to give some numerical results about our method. More precisely, we analyse on this example simultaneously the time gain and precision obtained with our method in comparison to solving the high-fidelity problem with the linear programming solver \texttt{linprog} of the Python library \texttt{scipy.optimize}. We compare these results to the time gain and precision of the classical Sinkhorn algorithm. We then test the behaviors of the two a posteriori error estimations we built.
 Finally, we apply our reduction method to a problem of 3D optimal transport applied to a transfer of color palette. 
Every numerical experiment can be found on the two notebooks \url{https://colab.research.google.com/drive/1OG9mqaIrASvS_ucC-V9sngRppLmtoOnr?usp=sharing} and \url{https://colab.research.google.com/drive/1m7X29BdsbFLV5iXbk7oe5_OKY_lsga76?usp=sharing}.
\begin{rmk}
   In each presented reduced-order model, the dual reduced basis is built using the method presented in Section \ref{subsection : small U and V}. Thus, when the size of a reduced-basis is mentioned, it refers to the dimension of the primal reduced-cone.
\end{rmk}
%

\subsection{Analysis of the numerical results of the reduced-order model in dimension 1}
In the sequel, we use a simple example of parametrized optimal transport problem to analyze numerically the performances of the reduced-order model \eqref{eq: RP} in term of time gain and precision with respect to the high-fidelity model \eqref{eq: primal}. We begin by describing the high-fidelity model built. We then briefly display at a fixed parameter solutions of the reduced-order model for different sizes of reduced bases, and compare it to a reference solution of the high-fidelity model.

\paragraph{The setting}
The family of optimal transport problems we aim at studying is a 1-dimensional optimal transport problem on the domains $X = \{x_i=  -1 + \dfrac{2i}{\cN_x}\}_{1 \leq i \leq \cN_x}$ and $Y= \{y_j = -1 + \dfrac{2j}{\cN_y}\}_{1 \leq j \leq \cN_y}$. We chose the quadratic cost $\CC_{ij} = \|y_j - x_i \|^2$, for $1\leq i \leq \cN_x$, $1\leq j \leq \cN_y$. We chose the family of marginals to be convex combinations of 2 normalized gaussians given by:
\begin{equation}
    \mmu_0^i = \frac{e^{- \dfrac{(x_i - m^{\mu}_0)^2}{2 \sigma^{\mu 2}_0}}}{\sum_{i^" = 0}^{\cN_x} e^{-\dfrac{(x_{i^"} - m^{\mu}_0)^2}{2 \sigma_0^{\mu 2}}}}, 
     \mmu_1^i = \dfrac{e^{- \dfrac{(x_i - m^{\mu}_1)^2}{2 \sigma^{\mu 2}_1}}}{\sum_{i^" = 0}^{\cN_x} e^{- \dfrac{(x_{i^"} - m^{\mu}_1)^2}{2 \sigma^{\mu2}_1}}} \text{ for } 1 \leq i \leq \cN_x, 
,
\end{equation}
and
\begin{equation}
        \nnu_0^j = \dfrac{e^{- \dfrac{(y_j - m^{\nu}_0)^2}{2 \sigma^{\nu 2}_0})}}{\sum_{j^" = 0}^{\cN_y} e^{(\dfrac{(y_{j^"} - m^{\nu}_0)^2}{2 \sigma_0^{\nu 2}})}}, 
     \nnu_1^j = \dfrac{e^{(- \dfrac{(y_j - m^{\nu}_1)^2}{2 \sigma^{\nu 2}_1})}}{\sum_{j^" = 0}^{\cN_y} e^{- \dfrac{(y_{j^"} - m^{\nu}_1)^2}{2 \sigma^{\nu2}_1})}} \text{ for } 1 \leq j \leq \cN_y,
\end{equation}
where $m^{\mu}_0 = -\frac{1}{2}$, $m^{\mu}_1 = \frac{1}{2}$, $m^{\nu}_0 = -\frac{1}{2}$, $m^{\nu}_0 = \frac{1}{2}$, and $\sigma_0^{\mu} = \sigma_1^{\mu} = \sigma_0^{\nu} = \sigma_1^{\nu} = \frac{1}{2} $ . Figure \ref{fig: margs} gives an example of a parametrized measure obtained in this configuration. We set the discretizations parameters at $\cN_x = \cN_y = 100$. 
\begin{figure}[htbp] 
  \centering
  \includegraphics[width=0.9\linewidth]{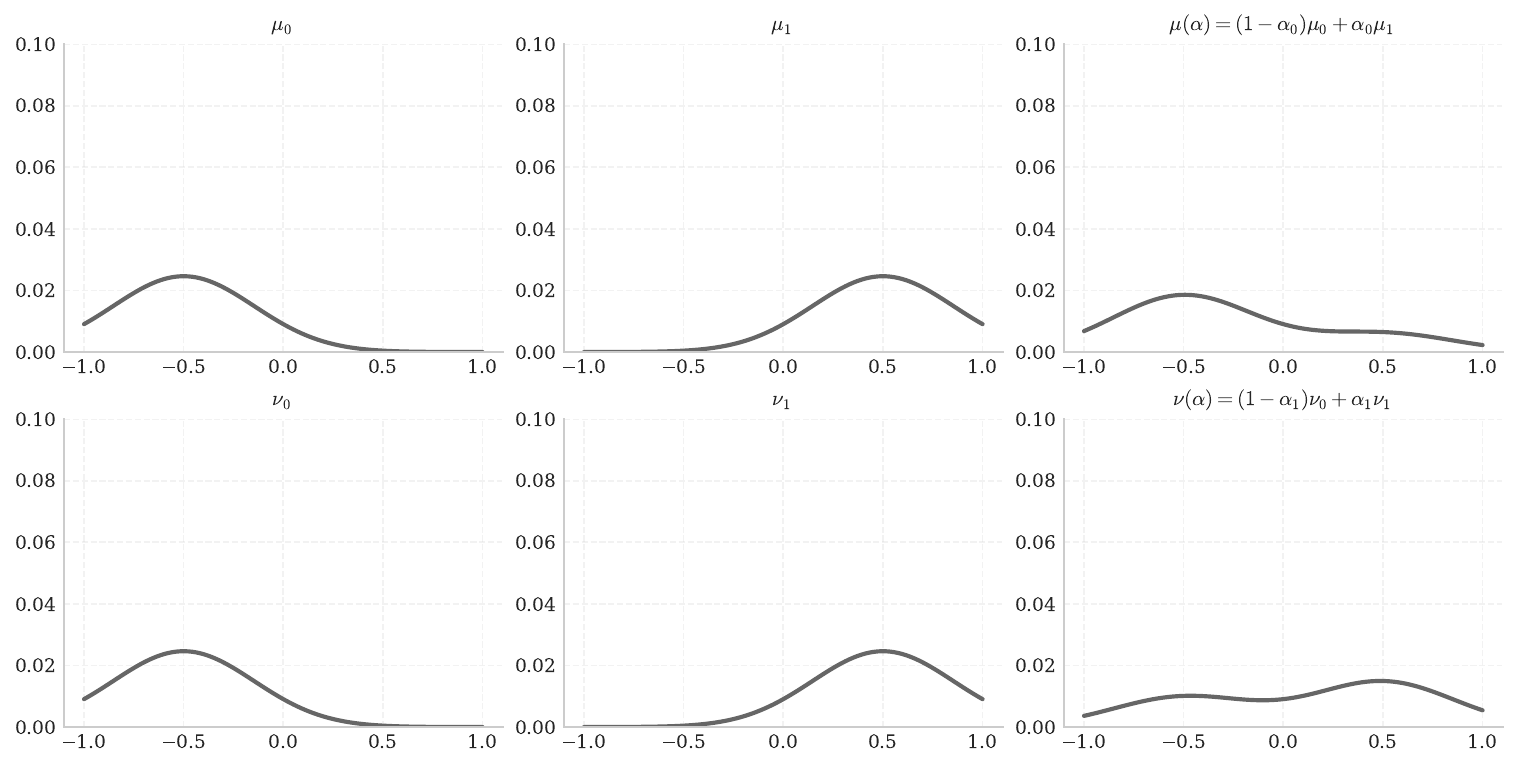}
\caption{Visualization of the parametrized marginals}
\label{fig: margs}
\end{figure}

\paragraph{Snapshot grid \texorpdfstring{$\cA_{\text{train}}^P$}{}}
We built the training set $\cA_{\text{train}}^P$ as a regular grid of $[0,1]^2$. We then made the precision vary from $4 = 2^2$ to $400 = 20^2$ snapshots.

\paragraph{Shape of solutions for the ROM}
On Figure \ref{fig: solutions of ROM} we show the general shape of the solutions of the reduced-order model (ROM) obtained with different sizes of reduced bases (RB), and we compare it to the solution of the high-fidelity model. By definition, solutions of the ROM are  positive (even convex) combinations of solutions of the high-fidelity model, and that is why in practice they have a sort of filament shape (and not the shape of a map anymore). The bigger the reduced bases are, the more the reduced solutions get a shape that looks similar to a map.
\begin{figure}[htbp] 
  \centering
  \includegraphics[width=0.99\linewidth]{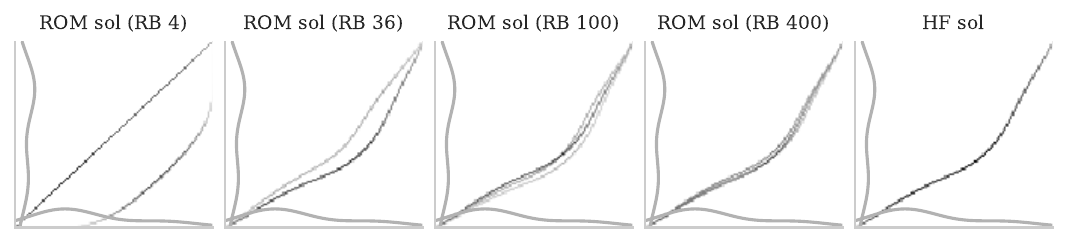}
\caption{Left : solutions of the reduced order-model (ROM) for different sizes of reduced bases (RB). Right : solution of the high-fidelity model (HF) of size $(10000\times 200)$.}
\label{fig: solutions of ROM}
\end{figure}

\paragraph{Evolution of the error with respect with the number of snapshots taken}
Figure \ref{fig: error} describes the evolution of mean error between the optimal value of the High-fidelity problem \eqref{eq: primal} and the reduced-order model \eqref{eq: RP} with respect to the number of snapshots taken to build the reduced basis. As expected,  the error decreases, as the snapshot grid gets finer.
\begin{figure}[htbp] 
  \centering
  \includegraphics[width=0.80\linewidth]{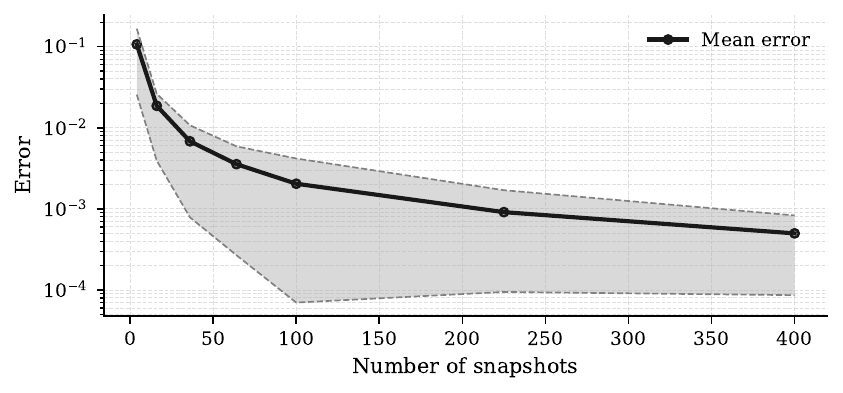}
\caption{Evolution of the mean error over a test set of 50 random parameters with respect to the number of snapshots taken. The grey area is delimited by the minimal and maximal error obtained on this test set.}
\label{fig: error}
\end{figure}

\paragraph{Comparison of performances of the reduced-order model with those of Sinkhorn}
We show on Figure \ref{fig: performances} the time gain and accuracy evolution of the reduced-order model for different sizes of reduced bases. We observe that, the bigger the reduced-basis gets, the less the error between the reduced-order model and the high-fidelity model is, but also the less the time gain is.
We also plot the results obtained with a Sinkhorn method, for different values of entropy $\varepsilon$. It appears that for the best precision points of both methods, the time gain of the reduced-order model is greater.
\begin{figure}[htbp] 
  \centering
  \includegraphics[width=0.92\linewidth]{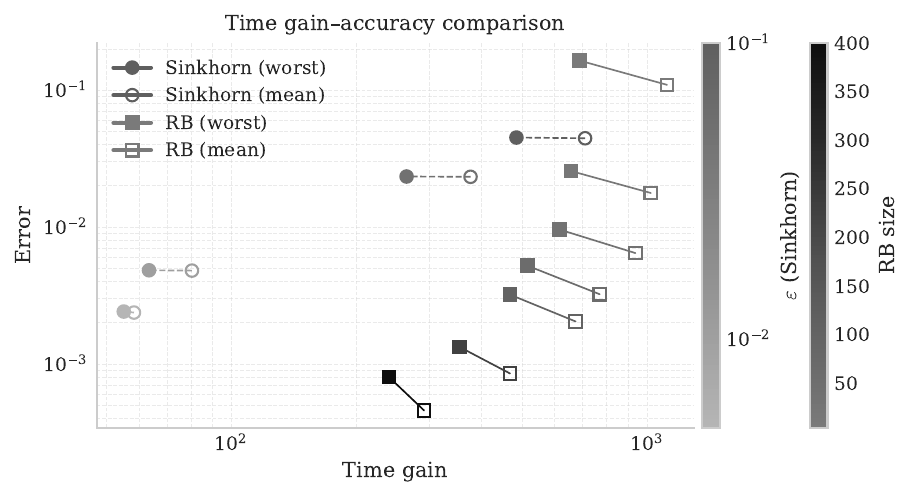}
\caption{Comparison of precision and time gain between the reduced-order model and the Sinkhorn method. We tested the method on a test set of $50$ random parameters, and plot in each case worst and mean performance.}
\label{fig: performances}
\end{figure}
%

\paragraph{A posteriori error estimations with \texorpdfstring{$c$}{}-transforms}
In the sequel, we test the first a posteriori error estimation build in Section \ref{section : A posteriori error approximator}.
We begin by comparing the exact error estimation to its fast evaluation.  We then compare the real error of the reduced-order model with the estimated one. It appears that the a posteriori error estimation may be far from the real error. We thus suggest a way to estimate the gap between the real error and the estimation in order to obtain a more accurate error estimation.

\paragraph{Comparison between the exact a posteriori error estimation and its fast evaluation }
We test here the accuracy of the fast estimation presented in Section \ref{subsection: EIM} that uses an EIM interpolation. Figure \ref{fig: test EIM evaluation} shows the evolution of the gap $|\text{fast error estimation} - \text{exact error estimation}|$ with respect to the size $M^{\eim}$ of the EIM basis. More precisely, it shows the mean, minimum and maximum gaps computed on a test set of $50$ random parameters.  It appears that this difference goes rapidly beyond the threshold $10^{-1}$ but then tends to diminish slowly.

 \begin{figure}[htbp] 
  \centering
  \includegraphics[width=0.8\linewidth]{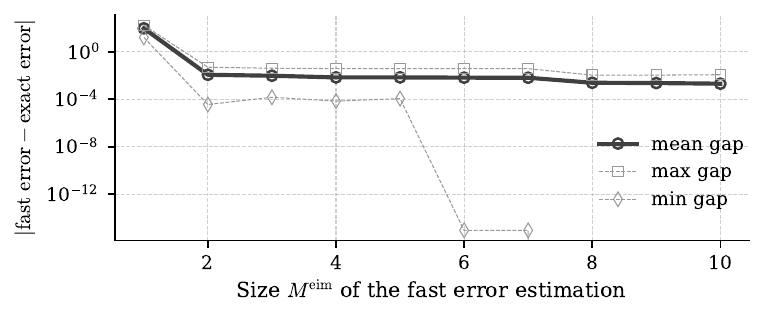}
  \caption{Gap between the fast evaluation and the exact evaluation of the a posteriori error estimation in function of the size of the EIM estimation taken. The curves are built as a mean/min/max of this gap over a test set of 50 random parameters.  }
  \label{fig: test EIM evaluation}
\end{figure}
\paragraph{Precision of the error estimation}
Figure \ref{fig: EIM estimator, fast U and V} compares on a test set of $50$ random parameters the mean exact error and the mean error obtained with the a posteriori error estimation using $c$-transforms.
 In the following, we compare the exact a posteriori error estimation and its fast approximation to the real error between the cost of the High-fidelity model and the reduced-order model. On figure \ref{fig: EIM estimator, fast U and V}, we observe that both fast - and exact error estimations behave poorly when the matrices $\UU$ and $\VV$ are built via the method proposed in \ref{subsection : small U and V}.
 \begin{figure}[htbp] 
  \centering
  \includegraphics[width=0.6\linewidth]{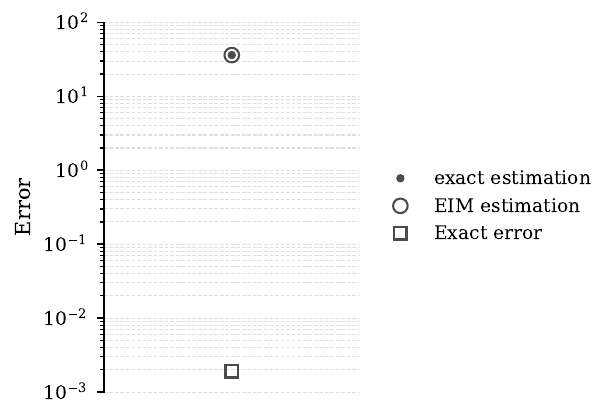}
  \caption{Comparison of the exact error and the a posteriori error estimation. Here we represent a mean of errors computed on a test set of 50 random parameters. }
  \label{fig: EIM estimator, fast U and V}
\end{figure}

\paragraph{Corrected error estimation}\label{paragraph:  error correction}
In order to get a better estimation, we estimate using a training set $\cA_{\text{est}}$ three constants $C_{\max}$, $C_{\min}$ and $C_{\text{mean}}$ that correspond to the maximum (respectively the minimum and the mean) of the ratio between the real error and the error approximation evaluated on every parameter of $\cA_{\text{test}}$. We then multiply the a posteriori error estimation by these constants. Using this method, we obtain a more precise estimation, as shown on figure \ref{fig: error correction}.
\begin{figure}[htbp] 
  \centering
  \includegraphics[width=0.68\linewidth]{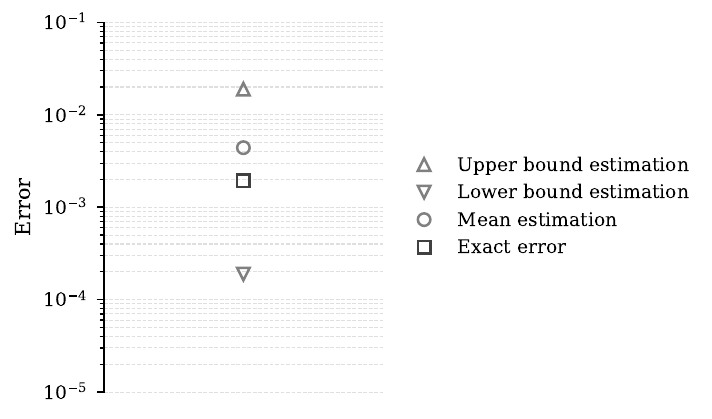}
  \caption{Error correction with a multiplicative constant. The values correspond to a mean over 50 random parameters. The training set used to approximate the constant is of size 100.}
  \label{fig: error correction}
\end{figure}
%

\paragraph{A posteriori error estimations using continuity}
Figure \ref{fig: estimation continuity} shows the a posteriori error estimation obtained using the bound computed in Section. \ref{subsection: erorr estimation continuity}. The bigger the training set, the finer the estimation, but we are still loosing several order of magnitude in comparison to the real error.
\begin{figure}[htbp] 
  \centering
  \includegraphics[width=0.8\linewidth]{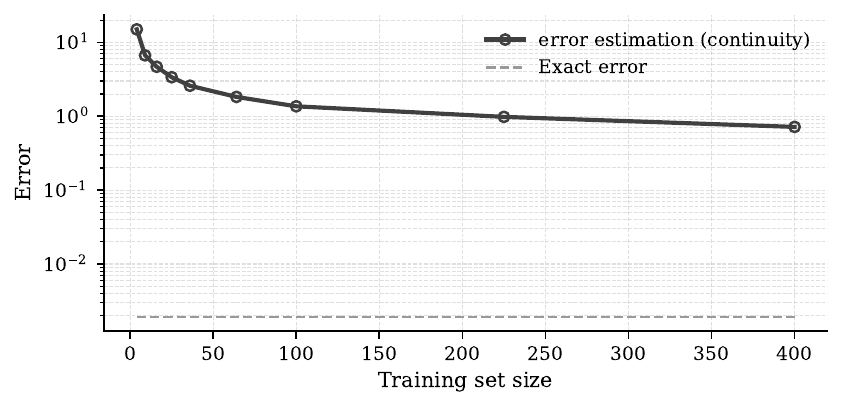}
  \caption{A posteriori error estimation using continuity in function of the size of the training set.}
  \label{fig: estimation continuity}
\end{figure}
We may improve numerically the estimation by computing constant corrections as we did in Section \ref{paragraph:  error correction}. We show the resulting estimation on Figure \ref{fig: continuity error correction}.
\begin{figure}[htbp] 
  \centering
  \includegraphics[width=0.99\linewidth]{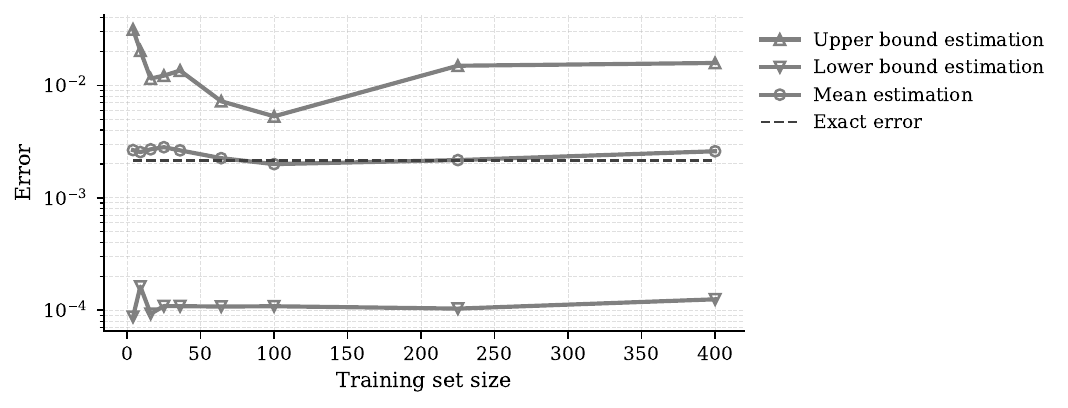}
  \caption{Error correction with a multiplicative constant. The training set used to approximate the constant is of size 100.}
  \label{fig: continuity error correction}
\end{figure}

\subsection{ROM in high dimensions}
The reduction method that we proposed in Section \ref{section: RP} formally requires to build a matrix $\widehat\WW$ of size $\cN_x\cN_y\times R$, which can be numerically challenging when $\cN_x$ and $\cN_y$ are large. We show in this section that  we do not need to compute the matrix $\widehat \WW$ to build the reduced-order model \eqref{eq: RP}. We then propose an efficient way to reconstruct a map from a reduced-order solution. 
\paragraph{Building the reduced-order model without computing \texorpdfstring{$\WW$}{}}
We show here that there is no need to compute explicitly $\widehat{\WW}$ to build the reduced order model \eqref{eq: RP}.
The matrix $\widehat{\WW}$ has the form
\begin{equation}
    \widehat \WW = \begin{pmatrix} \ppi_*(\aalpha_1) | \cdots | \ppi_*(\aalpha_R)\end{pmatrix},
\end{equation}
where $\{ \ppi_*(\aalpha_r) \}_{1 \leq r\leq R}$ are solutions of the high-fidelity problems $\{P_{\aalpha_r}\}_{1 \leq r \leq R}$. In the reduced-order model \eqref{eq: RP}, $\widehat \WW$ appears in the cost in the form $\widehat \WW^T \cc$ and in the constraints in the form $\AA \widehat \WW$. But both of these matrix can be reformulated. On one hand, we have
\begin{equation} \label{eq: reduced cost}
    \widehat \WW^T \cc = \begin{pmatrix}\ppi_*(\aalpha_1)^T \cc | \cdots | \ppi_*(\aalpha_R)^T \cc\end{pmatrix}^T
     = 
     \begin{pmatrix} I(\aalpha_1) | \cdots | I(\aalpha_R)\end{pmatrix}^T, 
\end{equation}
Thus $\widehat \WW^T \cc$ is obtained by stacking the optimal costs of the high-fidelity problems at $\{\aalpha_r\}_{1 \leq r \leq R}$.
On the other hand, 
\begin{equation}\label{eq: reduced constraints}
    \AA \widehat \WW = \begin{pmatrix} \mmu(\aalpha_1) | \cdots | \mmu(\aalpha_R) \\
    \nnu(\aalpha_1) | \cdots | \nnu(\aalpha_R)
    \end{pmatrix},
\end{equation}
thus $\AA \widehat \WW$ can even be obtained without solving any proble.

\paragraph{Reconstructing solutions in high-dimensions from solutions of \texorpdfstring{\eqref{eq: RP}}{}}\label{subsubsection: build map T}
In some applications (see for instance Section \ref{subsection: color transfer}), one may need to build from a solution of the reduced-order model a transport map approximating the solution of the high-fidelity model \eqref{eq: primal}. As show in Figure \eqref{fig: solutions of ROM}, the solutions of the the reduced-order model does not have the shape of a map. 
A similar issue is also encountered when solving Optimal Transport with Sinkhorn method, regularized transport \cite{ferradans2014regularized}  or with sliced optimal transport \cite{pitie2007automated} \cite{rabin2010regularization}.
In \cite{ferradans2014regularized}, the transport map is built from a transport plan $\displaystyle \left\lbrace \PPi_{ij} \right\rbrace_{\begin{matrix} 1 \leq i \leq \cN_x \\ 1 \leq j \leq \cN_y\end{matrix}}$ as follows
\begin{equation}
   \displaystyle  \TT_i := \left\lfloor \frac{\displaystyle\sum_{1 \leq j \leq \cN_y} \PPi_{ij} j}{\displaystyle\sum_{1 \leq j \leq \cN_y} \PPi_{ij} } \right\rfloor \hspace{1cm} \text{for } 1 \leq i \leq \cN_x.
\end{equation}
We adapt this computation to the reduced order model. Indeed, given a solution $\widehat \pp_*(\aalpha)$ of the reduced-order model \eqref{eq: RP}, one can build $\TT_*$ in the following way
\begin{equation}
    \begin{aligned}
      \displaystyle  \TT_{*i} :&=& \left\lfloor \frac{\displaystyle  \sum_{1 \leq j \leq \cN_y} \sum_{1\leq r \leq R} \pp_*^r(\aalpha) \PPi_{*ij}(\aalpha_r) j}
      { \displaystyle \sum_{1 \leq j \leq \cN_y}  \sum_{1\leq r \leq R} \pp_*^r(\aalpha) \PPi_{*ij}(\aalpha_r) j } \right\rfloor \\     
      & =& \left\lfloor \frac{ \displaystyle \sum_{1\leq r \leq R} \pp_*^r(\aalpha)  \left( \sum_{1 \leq j \leq \cN_y}    \PPi_{*ij}(\aalpha_r) j \right)}
      {\displaystyle \sum_{1\leq r \leq R} \pp_*^r(\aalpha) \left( \sum_{1 \leq j \leq \cN_y}  \PPi_{*ij}(\aalpha_r) \right) } \right\rfloor,  
    \end{aligned}
\end{equation}
where $ \displaystyle \{ \PPi_*(\aalpha_r) \}_{1 \leq r \leq R} $ are the matrix versions of the the transport plans $\{ \ppi_*(\aalpha_r) \}_{1 \leq r \leq R} $
This way, after precomputations, we reduce the inital building cost $\cN_x\cN_y$ to $R\cN_x$.
\subsection{Application : Reduced-bases applied to color transfer}\label{subsection: color transfer}
In this section, we present an application of the reduction method to a color transfer problem. The principle of color transfer is to modify an image so that its  color histogram matches a target histogram (named color palette). To do so, \cite{pitie2007automated} and \cite{rabin2010regularization} propose to match the histogram of the original image to the target palette with a map that minimizes the total transport cost between these two histograms.
In the present case, we propose a variation of this problem: given an image and $K_y$ color histograms, we would like to transfer the color histogram of our image to color palettes built as convex combinations of the $K_y$ histograms. We begin by introducing the transport problem solved to perform the color transfer. We then compare the results obtained with the reduced-order model to what is obtained solving the high-fidelity model.

\paragraph{Principle of color transfer}
Given a source image $\mathrm{Im}$, we denote by $\mmu \in \RR^{\cN^3_x}$ its color histogram. Let  $\{\mathrm{Target_l}\}_{1\leq l \leq  K_y}$ be target palettes and $\{ \nnu_l \}_{1\leq l \leq K_y} \in (\RR^{\cN_y^3})^{K_y}$ be their corresponding color histograms. Given a convex combination $\displaystyle \nnu(\aalpha) := \sum_{1 \leq l \leq K_y} \aalpha_l \nnu_l $ of these color histograms, we would like to find a map $T(\aalpha) : \RR^{\cN^3_x} \longrightarrow \RR^{\cN^3_y}$ that transports $\mmu$ onto $\nnu(\aalpha)$ and that minimize the total transport cost :
\begin{equation}
    \mathop{\min}_{\begin{array}{c}
    T : \RR^{\cN^3_x} \longrightarrow \RR^{\cN^3_y} \\
   \displaystyle \sum_{i : T(i) = j} \mmu_i = \nnu(\aalpha)_j, \text{ for } j = (j_1, j_2, j_3), 1 \leq j_1, j_2, j_3 \leq \cN_y
    \end{array} }
    \displaystyle \sum_{\small\begin{array}{c} i = (i_1, i_2, i_3),\\ 1 \leq i_1, i_2, i_3 \leq \cN_x \end{array}} \CC_{i T(i)}, 
\end{equation}
where $\CC : \RR^{\cN^3_x\times  \cN^3_y} \longrightarrow \RR_+$. In practice, for $i = (i_1, i_2, i_3)$ and $j = (j_1, j_2, j_3)$, $\{\CC_{i,j} = |i_1 - j_1|^2 + |i_2 - j_2|^2 + |i_3 - j_3|^2 \}$.
As underlined in \cite{COTFNT}, this discrete Monge problem is possibly not well defined. We relax this problem with the Kantorovitch formulation
\begin{equation}\label{eq: color transfer}
\displaystyle \mathop{\min}_{\begin{array}{cc}
&\PPi \in \RR_+^{\cN^3_x \times \cN^3_y} \\
&\displaystyle \sum_{j} \PPi_{i,j} = \mmu_i \text{ for } i = (i_1, i_2, i_3), 1 \leq i_1, i_2, i_3 \leq \cN_x,\\
&\displaystyle \sum_{i} \PPi_{i,j} = \nnu_j(\aalpha) \text{ for } j = (j_1, j_2, j_3), 1 \leq j_1, j_2, j_3 \leq \cN_y \\
\end{array}
}
\left\langle \PPi, \CC \right\rangle .
\end{equation}
Problem \eqref{eq: color transfer} reads as a 3D and two-marginal optimal transport problem.
To solve it and build a reduced order model, we use the Sinkhorn method designed in  \cite{Storimage}.
Once an optimal transport plan has been built, it is possible to obtain a map following the method described in Section \ref{subsubsection: build map T}.

\paragraph{Results} In the following numeraical tests, $\cN_x = \cN_y = 64$ so that $\cN_x^3 = \cN_y^3 \approx 2.6 \; 10^5$.
Table \ref{tab:time_comparison} compares the time taken on a commodity laptop to solve one transfer color problem (Optimal transport problem + building a map) with Sinkhorn method and with the reduced-basis approach (Here, the reduced-order model is build with only 3 snapshots). The reduced model achieves a significant speed-up over the high-fidelity Sinkhorn method.
\begin{table}[htbp]
\centering
\begin{tabular}{l c c}
\toprule
Method & Time (s) & Speed-up \\
\midrule
Solving Reduced model & 0.0208 & $\times \textbf{333}$ \\
High-fidelity (Sinkhorn) & 6.93 & $1$ \\
\bottomrule
\end{tabular}
\caption{Computation time comparison. The value corresponds to the mean computation time on a test set of 5 parameters.}\label{tab:time_comparison}
\end{table}

On Figure \ref{fig: color transfer}, we compare the results of the reduced basis method to the results obtained via solving the high-fidelity model. At first sight, the images obtained with both methods look similar. On Figure \ref{fig: color transfer detail}, we exhibit that one can still observe differences. Indeed, on this example, the red tones appear less vivid in the picture obtained with the reduced-order model.
\begin{figure}[htbp] 
  \centering
  \includegraphics[width=1\linewidth]{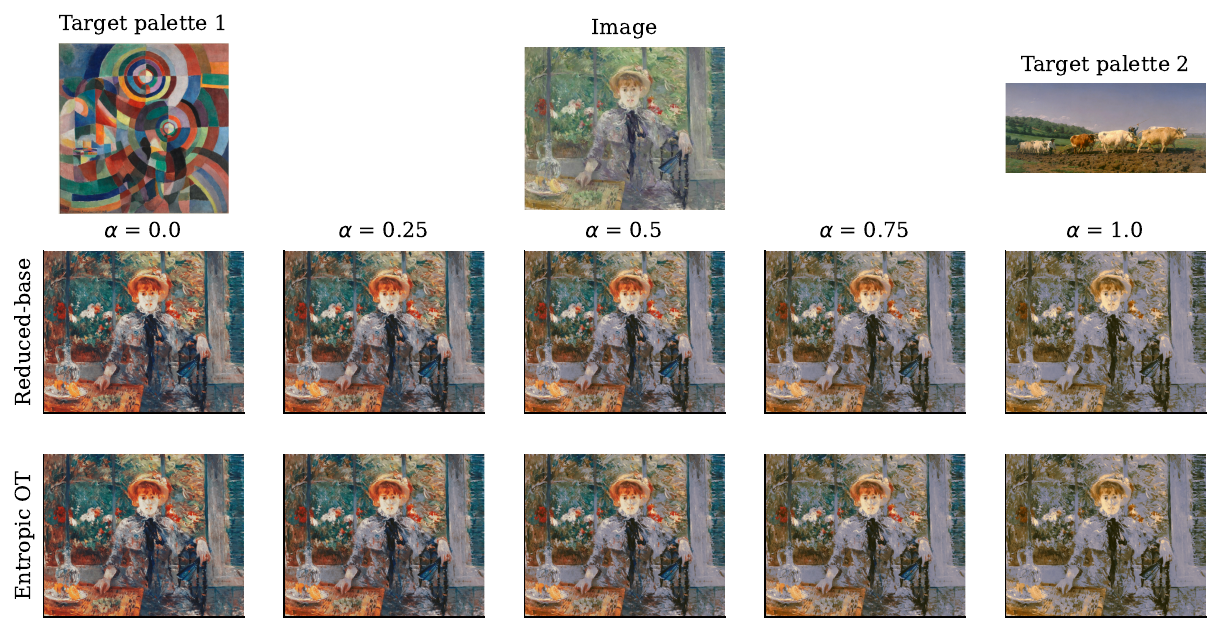}
  \caption{Color transfer obtained with a reduced-basis method (line 2) and via. These pictures where obtained with a reduced basis with only 3 snapshots. For $0 \leq \alpha \leq 1$, the target palette is computed as $(1-\alpha) \texttt{ (target palette 1) } + \alpha \texttt{(target palette 2)}$. \\ \tiny{Sources : Image (Berthe Morisot, \textit{Après le déjeuner}, 1881), Target palette 1 (Sonia Delaunay, \textit{Prismes électriques}, 1914), Target palette 2 (Rosa Bonheur, \textit{Labourage nivernais}, 1849).}{}}
  \label{fig: color transfer}
  
\end{figure}
\begin{figure}[htbp] 
  \centering
  \includegraphics[width=1\linewidth]{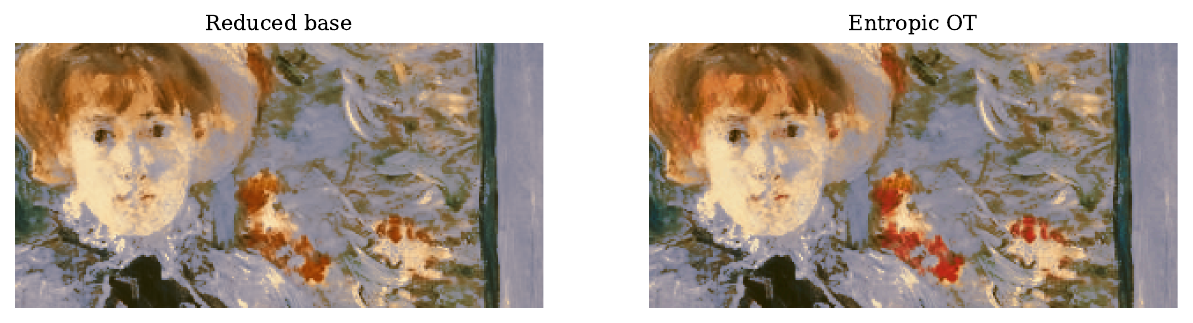}
  \caption{Detail of the resulting images for $\aalpha = 0.9$.}{}
  \label{fig: color transfer detail}
\end{figure}

\paragraph{Acknowledgments}
The third author's work  benefited from the support of the FMJH Program PGMO, Université Paris-Saclay and from the ANR project GOTA (ANR-23-CE46-0001). We acknowledge the financial support of European Research Council (ERC) under the European Union’s
Horizon 2020 Research and Innovation Programme – Grant Agreement n°101077204 HighLEAP.

\bibliographystyle{plain}
\bibliography{biblio}
\end{document}

%% file: macros.tex
\def\RR{\mathbb{R}}
\def\N{\mathbb{N}}

\def\cA{\mathcal{A}}
\def\cG{\mathcal{G}}

\def\cI{\mathcal{I}}

\def\cN{\mathcal{N}}

\def\cU{\mathcal{U}}
\def\cV{\mathcal{V}}
\def\cW{\mathcal{W}}

\def\aalpha{\boldsymbol{\alpha}}
\def\bbeta{\boldsymbol{\beta}}

\def\pphi{\boldsymbol{\varphi}}
\def\ppsi{\boldsymbol{\psi}}
\def\ppi{\boldsymbol{\pi}}
\def\PPi{\boldsymbol{\Pi}}

\def\bbeta{\boldsymbol{\beta}}
\def\mmu{\boldsymbol{\mu}}
\def\nnu{\boldsymbol{\nu}}

\def\aa{\mathbf{a}}
\def\bb{\mathbf{b}}
\def\cc{\mathbf{c}}

\def\ee{\mathbf{e}}
\def\ff{\mathbf{f}}
\def\Gg{\mathbf{g}}
\def\pp{\mathbf{p}}
\def\qq{\mathbf{q}}
\def\uu{\mathbf{u}}
\def\vv{\mathbf{v}}
\def\ww{\mathbf{w}}

\def\AA{\mathbf{A}}
\def\CC{\mathbf{C}}

\def\sS{\mathbf{S}}

\def\TT{\mathbf{T}}
\def\UU{\mathbf{U}}
\def\VV{\mathbf{V}}
\def\WW{\mathbf{W}}

\def\00{\mathbb{0}}
\def\11{\mathbb{1}}

\def\meim{m^{\text{eim}}}

\def\eim{\text{eim}}

%% file: packages.tex
\usepackage[english]{babel}
\usepackage[letterpaper,top=2cm,bottom=2cm,left=3cm,right=3cm,marginparwidth=1.75cm]{geometry}
\usepackage{multicol}
\usepackage{appendix}

\usepackage{color}
\usepackage[dvipsnames]{xcolor}

\usepackage{subfigure}
\usepackage{graphicx}

\usepackage{tikz}

\usepackage{booktabs}

\usepackage{bbold} 
\usepackage{bm}
\usepackage{eucal}

\usepackage{algorithm}
\usepackage{algpseudocode}

\usepackage{multicol}

\usepackage[
colorlinks=true, 
allcolors= Apricot,
frenchlinks = true]
{hyperref}
\hypersetup{frenchlinks=true}

\usepackage{amssymb}
\usepackage{amsmath}
\usepackage{amsthm}
\usepackage{amsfonts}
\usepackage{latexsym}
\usepackage{stmaryrd} 

\newtheorem{thm}{Theorem}[section]
\newtheorem{defi}[thm]{Definition}
\newtheorem{cor}[thm]{Corollary}
\newtheorem{prop}[thm]{Proposition}
\newtheorem{lem}[thm]{Lemma}
\newtheorem{rmk}[thm]{Remark}